\theoremstyle{definition}
\newtheorem{theorem}{Theorem}[section]
\newtheorem{lemma}[theorem]{Lemma}
\newtheorem{corollary}[theorem]{Corollary}
\newtheorem{conjecture}[theorem]{Conjecture}
\newtheorem{note}[theorem]{Note}
\newtheorem{prop}[theorem]{Proposition}
\newtheorem{fact}[theorem]{Experimental Fact}
\newtheorem{definition}[theorem]{Definition}
\newtheorem{example}[theorem]{Example}
\theoremstyle{remark}
\numberwithin{equation}{section}
\newcommand{\ba}{\begin{eqnarray}}
\newcommand{\ea}{\end{eqnarray}}
\begin{document}

\title[A sequence related to Narayana polynomials]
{A probabilistic interpretation of a sequence related to Narayana 
polynomials}

\author[T. Amdeberhan]{Tewodros Amdeberhan}
\address{Department of Mathematics,
Tulane University, New Orleans, LA 70118}
\email{tambeder@tulane.edu}

\author[V. Moll]{Victor H. Moll}
\address{Department of Mathematics,
Tulane University, New Orleans, LA 70118}
\email{vhm@math.tulane.edu}

\author[C. Vignat]{Christophe Vignat}
\address{Information Theory Laboratory, E.P.F.L., 1015 Lausanne, Switzerland}
\email{christophe.vignat@epfl.ch}

\subjclass{Primary 11B83,  Secondary 11B68,60C05}

\date{\today}

\keywords{Bessel zeta functions, beta distributions, 
 Catalan numbers, conjugate random variables, 
cumulants, determinants, 
Narayana polynomials, random variables, Rayleigh functions}

\begin{abstract}
A sequence of coefficients appearing in a recurrence for the Narayana 
polynomials is generalized. The coefficients are given a probabilistic 
interpretation in terms of beta distributed random variables. The 
recurrence established by M. Lasalle is then obtained from a classical 
convolution identity. Some arithmetical properties of the generalized 
coefficients are also established.
\end{abstract}

\maketitle


\vskip 20pt 

\section{Introduction} 
\label{S:intro} 

The Narayana polynomials 
\begin{equation}
\mathcal{N}_{r}(z) = \sum_{k=1}^{r} N(r,k) z^{k-1}
\label{nara-poly-def}
\end{equation}
\noindent
with the Narayana numbers $N(r,k)$ given by 
\begin{equation}
N(r,k) = \frac{1}{r} \binom{r}{k-1} \binom{r}{k}
\label{nara-numb-def}
\end{equation}
\noindent
have a large number of combinatorial properties. In a recent paper, 
M. Lasalle \cite{lasallem-2012a} established the recurrence 
\begin{equation}
(z+1)\mathcal{N}_{r}(z) - \mathcal{N}_{r+1}(z) 
= \sum_{n \geq 1} (-z)^{n} \binom{r-1}{2n-1}
A_{n} \mathcal{N}_{r-2n+1}(z).
\label{def0-A}
\end{equation}

The numbers $A_{n}$ satisfies the recurrence 
\begin{equation}
(-1)^{n-1}A_{n} = C_{n} + 
\sum_{j=1}^{n-1} (-1)^{j} \binom{2n-1}{2j-1} A_{j}C_{n-j},
\label{def-An}
\end{equation}
\noindent
with $A_{1} = 1$ and $C_{n} = \frac{1}{n+1} \binom{2n}{n}$ the Catalan
number. This recurrence is taken here as being the definition 
of $A_{n}$. The first few values are 
\begin{equation}
A_{1} = 1, \, A_{2} = 1, \, A_{3} = 5, \, A_{4} = 56, \, A_{5} = 1092, 
\, A_{6} = 32670.
\end{equation}

Lasalle \cite{lasallem-2012a} shows that 
$\{ A_{n}: \, n \in \mathbb{N} \}$ is an increasing sequence of 
positive integers.  In 
the process of establishing the positivity of this sequence, he
contacted D. Zeilberger, who suggested the study of the related sequence
\begin{equation}
a_{n} = \frac{2 A_{n}}{C_{n}},
\end{equation}
\noindent
with first few values
\begin{equation}
a_{1} = 2, \, a_{2} = 1, \, a_{3} = 2, \, a_{4} = 8, \, a_{5} = 52, 
\, a_{6} = 495, \, a_{7} = 6470.
\end{equation}

The recurrence \eqref{def-An} yields 
\begin{equation}
(-1)^{n-1} a_{n} = 2 + \sum_{j=1}^{n-1} (-1)^{j} \binom{n-1}{j-1} 
\binom{n+1}{j+1} \frac{a_{j}}{n-j+1}.
\label{def-an}
\end{equation}
\noindent
This may be expressed in terms of the numbers 
\begin{equation}
\sigma_{n,r}:= \frac{2}{n} \binom{n}{r-1} \binom{n+1}{r+1}
\label{sigma-def}
\end{equation}
\noindent
that appear as entry $A108838$ in $OEIS$ and count Dyck paths by the number of 
long interior inclines. The fact that $\sigma_{n,r}$ is an integer also follows 
from 
\begin{equation}
\sigma_{n,r} = 
\binom{n-1}{r-1} \binom{n+1}{r} - \binom{n-1}{r-2} \binom{n+1}{r+1}.
\end{equation}
\noindent
The relation \eqref{def-an} can also be written as 
\begin{equation}
a_{n} = (-1)^{n-1} \left[ 2 + \frac{1}{2} \sum_{j=1}^{n-1} (-1)^{j} 
\sigma_{n,j} a_{j} \right].
\label{relation-1}
\end{equation}

The original approach by M. Lasalle \cite{lasallem-2012a} is to establish 
the relation 
\begin{equation}
(z+1) \mathcal{N}_{r}(z) - \mathcal{N}_{r+1}(z) = 
\sum_{n \geq 1} (-z)^{n} \binom{r-1}{2n-1} 
A_{n}(r) \mathcal{N}_{r-2n+1}(z)
\label{recu-lasalle}
\end{equation}
\noindent
for some coefficient $A_{n}(r)$. The expression 
\begin{equation}
\mathcal{N}_{r}(z) = \sum_{m \geq 0} z^{m} (z+1)^{r-2m-1} \binom{r-1}{2m} C_{m}
\label{nara-cata0}
\end{equation}
\noindent
given in \cite{coker-2003a}, is then employed 
to show that $A_{n}(r)$ is independent of $r$. This is the definition of 
$A_{n}$ given in \cite{lasallem-2012a}. Lasalle mentions in passing that 
``J. Novak observed, as empirical evidence, that the integers $(-1)^{n-1}A_{n}$
are precisely the (classical) cumulants of a standard semicircular 
random variable''. 

The goal of this paper is to revisit Lasalle's results, provide probabilistic
interpretation of the numbers $A_{n}$ and to consider
Zeilberger's suggestion.

\medskip

The probabilistic interpretation of the numbers $A_{n}$ starts with the 
semicircular distribution
\begin{equation}
f_{1}(x) = \begin{cases}
        \frac{2}{\pi} \sqrt{1-x^{2}} & \quad \text{ if } -1 \leq x \leq 1 \\
       0  & \quad \text{ otherwise}.
     \end{cases}
\end{equation}
\noindent
Let $X$ be a  random variable with distribution $f_{1}$. Then $X_{*} = 2X$
satisfies 
\begin{equation}
\mathbb{E} \left[ X_{*}^{r} \right] = 
\begin{cases} 
0 & \quad \text{ if } r \text{ is odd} \\
C_{m}  & \quad \text{ if } r \text{ is even, with } r = 2m,
\end{cases}
\end{equation}
\noindent
where $C_{n} = \frac{1}{m+1} \binom{2m}{m}$ are the Catalan numbers. The 
moment generating function 
\begin{equation}
\varphi(t) = \sum_{n=0}^{\infty} \mathbb{E} \left[ X^{n} \right] \frac{t^{n}}
{n!} 
\end{equation}
\noindent
is expressed in terms of the modified Bessel function of the first kind 
$I_{\alpha}(x)$ and the cumulant generating function 
\begin{equation}
\psi(t) = \log \varphi(t) = \sum_{n=1}^{\infty} \kappa_{1}(n) \frac{t^{n}}{n!}
\end{equation}
\noindent
has coefficients $\kappa_{1}(n)$, known as the cumulants of $X$. The 
identity 
\begin{equation}
A_{n} = (-1)^{n+1} \kappa_{1}(2n) 2^{2n},
\end{equation}
\noindent
is established here. Lasalle's 
recurrence \eqref{def-An} now follows from the convolution 
identity 
\begin{equation}
\kappa(n) = \mathbb{E} \left[ X^{n} \right] - 
\sum_{j=1}^{n-1} \binom{n-1}{j-1} \kappa(j) \mathbb{E} \left[ X^{n-j} \right]
\label{convolution}
\end{equation}
\noindent
that holds for any pair of moments and cumulants sequences \cite{smith-1995a}.
The coefficient $a_{n}$ suggested by D. Zeilberger now takes the form
\begin{equation}
a_{n} = \frac{2 (-1)^{n+1} \kappa_{1}(2n)}{\mathbb{E} \left[X_{*}^{2n} 
\right]}.
\label{an-def11}
\end{equation}

In this paper, these 
notions are extended to the case of random variables distributed 
according to the symmetric beta distribution 
\begin{equation}
\label{semicircular0}
f_{\mu}(x) = 
\frac{1}{B(\mu + \tfrac{1}{2}, \tfrac{1}{2})} 
(1-x^{2})^{\mu - 1/2}, \quad \text{ for } |x| \leq 1, \, \mu > 
- \tfrac{1}{2} 
\end{equation}
\noindent 
and $0$ otherwise. The semi-circular distribution is the particular 
case $\mu=1$. Here 
$B(a,b)$ is the classical beta function defined by the integral
\begin{equation}
B(a,b) = \int_{0}^{1} t^{a-1}(1-t)^{b-1} \, dt, \quad \text{ for } 
a, \, b  > 0.
\end{equation}
\noindent
These ideas lead to introduce a generalization of the Narayana polynomials
and these are expressed in terms of the classical Gegenbauer polynomials 
$C_{n}^{\mu + \tfrac{1}{2}}$. The coefficients $a_{n}$ are also 
generalized to a family of 
numbers $\{ a_{n}(\mu) \}$ with parameter $\mu$. The 
special cases $\mu=0$ and $\mu = \pm \tfrac{1}{2}$ are discussed in 
detail.  

\medskip

Section \ref{S:positivity} produces a recurrence for $\{ a_{n} \}$ 
from which the fact that $a_{n}$ is incresaing and positive are established. 
The recurrence comes from a relation between $\{ a_{n} \}$ and the Bessel 
function $I_{\alpha}(x)$. Section \ref{S:determinants} gives an expression 
for $\{ a_{n} \}$ in terms of a determinant of an upper Hessenberg matrix. 
The standard procedure to evaluate these determinants gives the 
original recurrence defining $\{ a_{n} \}$.  Section \ref{S:prob} introduces 
the probabilistic interpretation of the numbers $\{ a_{n} \}$. The cumulants 
of the associated random variable are expressed in terms of the Bessel zeta
function. Section \ref{S:narayana} presents the Narayana polynomials as 
expected values of a simple function of a semicircular random variable. These 
polynomials are generalized in Section \ref{S:generalized} and they are 
expressed in terms of Gegenbauer polynomials. The 
corresponding extension of
$\{ a_{n} \}$ are presented in Section \ref{S:special}.  The paper concludes
with some arithmetical 
properties of $\{ a_{n} \}$ and its generalization corresponding to the 
parameter $\mu = 0$. These are described in Section \ref{S:arithmetic}.

\section{The sequence $\{ a_{n} \}$ is positive and increasing} 
\label{S:positivity} 

In this section a direct proof of the positivity of the 
numbers $a_{n}$ defined in \eqref{def-an} is provided. Naturally 
this implies $A_{n} \geq 0$. 
The analysis employs the \textit{modified
Bessel function of the first kind} 
\begin{equation}
\label{bessel-mod}
I_{\alpha}(z) := \sum_{j=0}^{\infty} \frac{1}{j! \, (j+ \alpha)!} \, 
\left( \frac{z}{2} \right)^{2j+ \alpha}.
\end{equation}
\noindent
Formulas for this function appear in \cite{gradshteyn-2007a}.

\begin{lemma}
\label{lemma-bessel1}
The numbers $a_{n}$ satisfy
\begin{equation}
\sum_{j=1}^{\infty} \frac{(-1)^{j-1} a_{j}}{(j+1)!} \frac{x^{j-1}}{(j-1)!} 
= \frac{2}{\sqrt{x}} \frac{I_{2}(2 \sqrt{x})}{I_{1}(2 \sqrt{x})}.
\end{equation}
\end{lemma}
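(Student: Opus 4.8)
The plan is to connect the generating function in the statement with the moment generating function $\varphi(t)$ of the semicircular variable $X$, which the introduction has already linked to Bessel functions. First I would recall the standard evaluation $\varphi(t) = \mathbb{E}[e^{tX}] = 2I_1(t)/t$ for the semicircular density $f_1$; this follows by expanding $e^{tX}$ and using $\mathbb{E}[X^{2m}] = C_m/2^{2m}$ together with the series \eqref{bessel-mod}. Correspondingly, the cumulant generating function is $\psi(t) = \log\varphi(t) = \log(2I_1(t)/t)$, so $\psi'(t) = I_1'(t)/I_1(t) - 1/t$. Using the contiguous relations for Bessel functions, namely $I_1'(t) = I_0(t) - I_1(t)/t$ and $I_0(t) - 2I_1(t)/t = I_2(t)$ (equivalently $I_0 - I_2 = 2I_1/t$), one rewrites $\psi'(t) = I_0(t)/I_1(t) - 2/t = I_2(t)/I_1(t) \cdot \big(I_1(t)/I_1(t)\big)^{-1}$... more carefully: $\psi'(t) = \frac{I_0(t)}{I_1(t)} - \frac{2}{t}$, and substituting $I_0(t) = I_2(t) + \tfrac{2}{t}I_1(t)$ gives the clean identity $\psi'(t) = \dfrac{I_2(t)}{I_1(t)}$.

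Next I would translate this identity about $\psi'(t)$ into the claimed power-series identity. Writing $\psi(t) = \sum_{n\ge 1}\kappa_1(n) t^n/n!$ and using the established fact (stated in the introduction) that $\kappa_1$ vanishes at odd indices while $A_n = (-1)^{n+1}\kappa_1(2n)2^{2n}$ and $a_n = 2A_n/C_n$, I can express $\kappa_1(2n) = (-1)^{n+1}A_n/2^{2n} = (-1)^{n+1} a_n C_n/2^{2n+1} = (-1)^{n+1} a_n \binom{2n}{n}/\big(2^{2n+1}(n+1)\big)$. Hence
\begin{equation*}
\psi'(t) = \sum_{n\ge 1} \kappa_1(2n)\frac{t^{2n-1}}{(2n-1)!}
= \sum_{n\ge 1} (-1)^{n+1}\frac{a_n}{(n+1)!\,n!}\,\frac{t^{2n-1}}{2},
\end{equation*}
where I have used $\binom{2n}{n}/\big(2^{2n}(n+1)(2n-1)!\big) = 1/\big((n+1)!\,n!\big)$. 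Setting $t = 2\sqrt{x}$ converts $\psi'(2\sqrt x) = I_2(2\sqrt x)/I_1(2\sqrt x)$ and $t^{2n-1} = 2^{2n-1} x^{n-1}\sqrt{x}$, so after dividing by $\sqrt{x}$ and simplifying powers of $2$ one lands exactly on
\begin{equation*}
\sum_{j\ge 1} \frac{(-1)^{j-1} a_j}{(j+1)!}\frac{x^{j-1}}{(j-1)!} = \frac{2}{\sqrt{x}}\,\frac{I_2(2\sqrt x)}{I_1(2\sqrt x)}.
\end{equation*}

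The main obstacle, and the step deserving the most care, is keeping the normalization constants consistent: there are several places where a factor of $2$ can be lost — in the definition of $X_*=2X$ versus $X$, in the relation $A_n=(-1)^{n+1}\kappa_1(2n)2^{2n}$, in $a_n = 2A_n/C_n$, and in the substitution $t\mapsto 2\sqrt x$. I would therefore verify the identity at low order (say $j=1,2$) against the known values $a_1 = 2$, $a_2 = 1$ and the first terms of $I_2(2\sqrt x)/I_1(2\sqrt x) = \sqrt{x}/2 - x^{3/2}/4 + \cdots$ (more precisely, expanding the Bessel ratio to order $x^{3/2}$) as a sanity check before committing to the general manipulation. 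A secondary, more conceptual point worth stating explicitly is why $\psi'(t) = I_2(t)/I_1(t)$: this rests only on the differentiation formula $\frac{d}{dt}\big(t^{-\nu}I_\nu(t)\big) = t^{-\nu}I_{\nu+1}(t)$ applied with $\nu=1$, since $\varphi(t) = 2t^{-1}I_1(t)$ gives $\varphi'(t) = 2t^{-1}I_2(t)$ directly, whence $\psi'(t) = \varphi'(t)/\varphi(t) = I_2(t)/I_1(t)$ with no need to invoke the other contiguous relations at all. That observation streamlines the whole argument.
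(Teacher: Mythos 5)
Your route is genuinely different from the paper's. The paper proves the lemma directly from the defining recurrence \eqref{def-an}: it cross-multiplies by $\sqrt{x}\,I_{1}(2\sqrt{x})$ and compares coefficients of $x^{n}$ on the two sides, so that Section \ref{S:positivity} remains self-contained and elementary. You instead derive the identity from the probabilistic interpretation, via $\varphi(t)=2t^{-1}I_{1}(t)$, $\psi'(t)=\varphi'(t)/\varphi(t)=I_{2}(t)/I_{1}(t)$ (your one-line argument using $\tfrac{d}{dt}\bigl(t^{-1}I_{1}(t)\bigr)=t^{-1}I_{2}(t)$ is correct and clean), together with $A_{n}=(-1)^{n+1}\kappa_{1}(2n)2^{2n}$ and $a_{n}=2A_{n}/C_{n}$. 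This is more conceptual and explains where the Bessel ratio comes from, but two points need repair before it is a proof of the lemma.

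First, the identity $A_{n}=(-1)^{n+1}\kappa_{1}(2n)2^{2n}$ is not available here: in the paper it is Theorem \ref{thm-mu1}, proved in Section \ref{S:narayana} using the machinery of Section \ref{S:prob}, whereas $a_{n}$ is defined by the recurrence \eqref{def-an}. Citing the introduction's announcement is not a proof; you must either forward-reference Theorem \ref{thm-mu1} (logically admissible, since its proof does not use this lemma, though it undercuts the point of Section \ref{S:positivity}, which is a direct argument from \eqref{def-an}) or supply the identification yourself, for instance by checking that the moment--cumulant convolution \eqref{convolution} applied to $X_{*}=2X$, whose even moments are the Catalan numbers, reproduces exactly \eqref{def-An}, so that $(-1)^{n+1}\kappa_{1}(2n)2^{2n}$ satisfies the recurrence defining $A_{n}$. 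Second, your displayed series for $\psi'(t)$ is incorrect: the claimed simplification $\binom{2n}{n}/\bigl(2^{2n}(n+1)(2n-1)!\bigr)=1/\bigl((n+1)!\,n!\bigr)$ is false (the left side equals the right side times $2n/2^{2n}$). The correct coefficient of $t^{2n-1}$ in $\psi'(t)$ is $(-1)^{n+1}a_{n}/\bigl(2^{2n}(n-1)!\,(n+1)!\bigr)$, and the powers of $2$ cancel only after the substitution $t=2\sqrt{x}$, $t^{2n-1}=2^{2n-1}x^{n-1}\sqrt{x}$; carried literally, your intermediate series does not land on the stated identity after that substitution. With the constants corrected, the argument does yield the lemma, and the low-order sanity check you propose (against $a_{1}=2$, $a_{2}=1$) would indeed have caught the slip.
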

\begin{proof}
The statement is equivalent to 
\begin{equation}
\sqrt{x} I_{1}(2 \sqrt{x}) \times 
\sum_{j=1}^{\infty} \frac{(-1)^{j-1} a_{j}}{(j+1)!} \frac{x^{j-1}}{(j-1)!} 
 = 2 I_{2}(2 \sqrt{x}).
\end{equation}
\noindent
This is established by comparing coefficients of $x^{n}$ on both sides 
and using \eqref{def-an}.
\end{proof}

Now change $x$ to $x^{2}$ in Lemma \ref{lemma-bessel1} to write 
\begin{equation}
\sum_{j=1}^{\infty} \frac{(-1)^{j-1} a_{j}}{(j+1)!} \frac{x^{2j-2}}{(j-1)!} 
=   \frac{2}{x} \frac{I_{2}(2x)}{I_{1}(2x)}.
 \label{nice-0}
\end{equation}
\noindent
The classical relations 
\begin{equation}
\frac{d}{dz} \left( z^{-m} I_{m}(z) \right) = z^{-m} I_{m+1}(z), \text{ and }
\frac{d}{dz} \left( z^{m+1} I_{m+1}(z) \right) = z^{m+1} I_{m}(z)
\label{classical}
\end{equation}
\noindent
give 
\begin{equation}
I_{1}'(z) = I_{2}(z) + \frac{1}{z} I_{1}(z).
\end{equation}
\noindent
Therefore \eqref{nice-0} may be written as 
\begin{equation}
\sum_{j=1}^{\infty} \frac{(-1)^{j-1} a_{j}}{(j+1)!} \frac{x^{2j-2}}{(j-1)!} 
= \frac{1}{x} \frac{d}{dx} \log \left( \frac{I_{1}(2x)}{2x} \right).
\label{nice-1}
\end{equation}

The relations \eqref{classical} also produce 
\begin{equation}
\frac{d}{dz} \left( \frac{z^{m+1} I_{m+1}(z)}{z^{-m} I_{m}(z)} \right) 
= z^{2m+1} \frac{I_{m}^{2}(z) - I_{m+1}^{2}(z)}{I_{m}^{2}(z)}.
\end{equation}
\noindent
In particular,
\begin{equation}
\frac{d}{dz} \left( \frac{z^{2} I_{2}(z)}{z^{-1} I_{1}(z)} \right) 
= z^{3}  - z^{3} \frac{I_{2}^{2}(z)}{I_{1}^{2}(z)}.
\end{equation}
\noindent
Replacing this relation in \eqref{nice-1} gives the recurrence stated next.

\begin{prop}
\label{recu-fora}
The numbers $a_{n}$ satisfy the recurrence 
\begin{equation}
2na_{n} = \sum_{k=1}^{n-1} \binom{n}{k-1} \binom{n}{k+1} a_{k} a_{n-k},
\quad \text{ for } n \geq 2,
\label{rec-11}
\end{equation}
\noindent
with initial condition $a_{1} =1$.
\end{prop}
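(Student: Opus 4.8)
The plan is to convert the Bessel-function identities assembled above into a single Riccati-type differential equation for the generating function
\[
G(x) := \sum_{j \geq 1} \frac{(-1)^{j-1} a_{j}}{(j+1)!\,(j-1)!}\, x^{2j-2},
\]
which is the left-hand side of \eqref{nice-1}, and then to read off \eqref{rec-11} by matching Taylor coefficients. The key preliminary remark is that \eqref{nice-0} says $G(x) = \tfrac{2}{x}\, I_{2}(2x)/I_{1}(2x)$; hence, upon setting $z = 2x$, at that value one has $z^{3} I_{2}(z)/I_{1}(z) = 4x^{4} G(x)$ and $\bigl( I_{2}(z)/I_{1}(z) \bigr)^{2} = \tfrac{x^{2}}{4}\, G(x)^{2}$.

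First I would substitute $z = 2x$ into the displayed identity
\[
\frac{d}{dz}\Bigl( \frac{z^{2} I_{2}(z)}{z^{-1} I_{1}(z)} \Bigr) = z^{3} - z^{3}\, \frac{I_{2}^{2}(z)}{I_{1}^{2}(z)},
\]
using the chain rule $\tfrac{d}{dz} F(z)\big|_{z = 2x} = \tfrac{1}{2}\, \tfrac{d}{dx} F(2x)$ on the left and the two substitutions above on the right. After cancelling the common factor $2$ this produces
\[
\frac{d}{dx}\bigl( x^{4} G(x) \bigr) = 4x^{3} - x^{5} G(x)^{2},
\]
which is the concrete meaning of ``replacing the relation in \eqref{nice-1}''; it is a first-order Riccati equation whose coefficient recursion I will now unwind.

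Next I would expand both sides as power series in $x$. Writing $g_{k} := (-1)^{k-1} a_{k}/\bigl( (k+1)!\,(k-1)! \bigr)$, the left-hand side equals $\sum_{j \geq 1}(2j+2)\, g_{j}\, x^{2j+1}$, while the Cauchy product gives $x^{5} G(x)^{2} = \sum_{j \geq 2}\bigl( \sum_{k=1}^{j-1} g_{k} g_{j-k} \bigr) x^{2j+1}$. Matching the coefficient of $x^{3}$, the only exponent at which $4x^{3}$ enters, recovers the initial condition for $a_{1}$, and matching the coefficient of $x^{2j+1}$ for $j \geq 2$ gives
\[
(2j+2)\, g_{j} = -\sum_{k=1}^{j-1} g_{k}\, g_{j-k}.
\]
The rest is bookkeeping: substitute the definition of $g_{k}$; the sign $(-1)^{j-1}$ on the left matches $-(-1)^{k-1}(-1)^{j-k-1} = (-1)^{j-1}$ on the right, so the signs cancel uniformly; use $2j+2 = 2(j+1)$ to reduce $(j+1)!$ to $j!$; and finally multiply through by $j \cdot j!\,(j-1)!$. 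This turns the relation into
\[
2j\, a_{j} = (j!)^{2} \sum_{k=1}^{j-1} \frac{a_{k}\, a_{j-k}}{(k-1)!\,(k+1)!\,(j-k-1)!\,(j-k+1)!},
\]
and recognizing $(j!)^{2}/\bigl( (k-1)!\,(k+1)!\,(j-k+1)!\,(j-k-1)! \bigr) = \binom{j}{k-1}\binom{j}{k+1}$ yields \eqref{rec-11}.

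I expect the only genuine obstacle to be this last transcription, where one has to keep the index shifts straight (in particular that the factor $2j+2$ is exactly what collapses $(j+1)!$ to $j!$) and track the parities of the signs carefully; conceptually nothing here is deep, since all of the analytic input has already been packed into the logarithmic-derivative form \eqref{nice-1} and the identity for $I_{1}, I_{2}$ quoted in the second paragraph. As a safeguard against algebra slips, the resulting recurrence can be checked numerically against the first values of $a_{n}$ listed in the introduction.
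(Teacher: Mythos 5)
Your proposal is correct and follows essentially the same route as the paper: the paper's proof consists precisely of substituting the Wronskian-type identity for $z^{3}I_{2}(z)/I_{1}(z)$ into \eqref{nice-1} and reading off the recurrence, and you have simply made the coefficient extraction explicit (correctly). One small remark: your $x^{3}$-coefficient match gives $a_{1}=2$, consistent with the values in the introduction, so the initial condition ``$a_{1}=1$'' printed in the proposition is a typo in the paper rather than an issue with your argument.
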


\begin{corollary}
\label{nonneg}
The numbers $a_{n}$ are nonnegative.
\end{corollary}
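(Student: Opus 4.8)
The plan is to read this off directly from the recurrence in Proposition \ref{recu-fora}, by strong induction on $n$. The base case is the initial condition, $a_{1} > 0$. For the inductive step, suppose $n \geq 2$ and that $a_{1}, a_{2}, \dots, a_{n-1}$ are all nonnegative. In the sum on the right-hand side of \eqref{rec-11} the index $k$ ranges over $1 \leq k \leq n-1$, so that $0 \leq k-1 \leq n-1 < n$ and $2 \leq k+1 \leq n$; hence each factor $\binom{n}{k-1}$ and $\binom{n}{k+1}$ is a nonnegative integer, and each product $a_{k} a_{n-k}$ is nonnegative by the inductive hypothesis. Therefore the whole right-hand side is nonnegative, and since $2n > 0$ we conclude $a_{n} \geq 0$. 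This completes the induction.

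I do not expect any genuine obstacle here: the content of the corollary is entirely encoded in the shape of \eqref{rec-11}, and all the actual work was done in passing from \eqref{def-an} through Lemma \ref{lemma-bessel1} to Proposition \ref{recu-fora}. It is worth recording that the same argument yields the sharper statement $a_{n} > 0$ for every $n$: once one knows $a_{1} > 0$ and, inductively, $a_{n-1} > 0$, the single term $k = 1$ in \eqref{rec-11}, namely $\binom{n}{0}\binom{n}{2} a_{1} a_{n-1}$, is already strictly positive for $n \geq 2$, so $2n a_{n}$ is a sum of nonnegative terms one of which is positive. This strict positivity, fed back into the recurrence, is also the input needed for the monotonicity of $\{a_{n}\}$.
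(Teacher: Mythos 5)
Your proof is correct and is essentially the paper's argument: the corollary is stated as an immediate consequence of the recurrence \eqref{rec-11}, whose right-hand side is a sum of products of binomial coefficients and earlier terms, and the strong induction you spell out (together with the strict positivity remark) is exactly what makes that ``immediate'' step precise.
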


\begin{prop}
\label{prop-rec}
The numbers $a_{n}$ satisfy
\begin{equation}
4a_{n} = \sum_{k=1}^{n-1} \binom{n-1}{k-1} \binom{n-1}{k} a_{k}a_{n-k} 
- \sum_{k=2}^{n-2} \binom{n-1}{k-2} \binom{n-1}{k+1} a_{k}a_{n-k}.
\end{equation}
\end{prop}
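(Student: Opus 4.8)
The plan is to derive this second bilinear relation from the same Bessel-function identity \eqref{nice-1} that produced Proposition \ref{recu-fora}, but processed through a different algebraic manipulation of the ratio $I_2/I_1$. Set $F(x) = \dfrac{1}{x}\dfrac{d}{dx}\log\!\left(\dfrac{I_1(2x)}{2x}\right)$, so that \eqref{nice-1} reads $\sum_{j\ge 1} \frac{(-1)^{j-1}a_j}{(j+1)!\,(j-1)!}\,x^{2j-2} = F(x)$. The idea is to write down a \emph{quadratic} differential identity satisfied by $I_1$ (or by the generating function $G(x):=\sum_{j\ge1}\frac{(-1)^{j-1}a_j}{(j+1)!\,(j-1)!}x^{2j}$, which is essentially $\log(I_1(2x)/(2x))$ up to the $x$-weight), and then read off coefficients of $x^n$. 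Concretely, I would look for an identity of the schematic form $x^2 G''(x) + c_1 x G'(x) + c_2 G(x) = (\text{quadratic in } G \text{ and its derivatives})$ coming from the Bessel recurrences \eqref{classical}; the left side contributes the single term $4a_n$ (after matching normalizations), while the right side, being a product of two series of the shape $\sum a_k x^{2k}/((k+1)!(k-1)!)$, contributes convolutions that, after clearing the factorial denominators, collapse to the two binomial sums $\binom{n-1}{k-1}\binom{n-1}{k}$ and $\binom{n-1}{k-2}\binom{n-1}{k+1}$ with the indicated signs.

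More explicitly, the natural route is: first, from \eqref{classical} establish a Riccati-type equation for $u(x) := I_2(2x)/I_1(2x)$, namely something like $u'(x) = 2 - \tfrac{2}{x}u(x) - 2u(x)^2$ up to constants (this is exactly the content of the displayed formula $\frac{d}{dz}(z^{3}I_2/I_1) = z^3 - z^3 (I_2/I_1)^2$ rewritten for $u$). Second, differentiate \eqref{nice-0}, i.e. $\frac{2}{x}u(x) = \sum_j \frac{(-1)^{j-1}a_j}{(j+1)!(j-1)!}x^{2j-2}$, once more in $x$ to introduce a second derivative, and substitute the Riccati relation to express everything as a single power series equal to a product of two known power series. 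Third, extract the coefficient of $x^{2n-2}$ (or $x^{2n-4}$, depending on where the differentiations land): the linear-in-$a$ terms on the left combine into a constant multiple of $a_n$ — the constant should work out to $4$ — and the quadratic term produces $\sum_k \frac{(-1)^{k-1}a_k}{(k+1)!(k-1)!}\cdot\frac{(-1)^{n-k-1}a_{n-k}}{(n-k+1)!(n-k-1)!}$ times some polynomial in $k$ and $n$ arising from the two differentiations; splitting that polynomial factor and re-expressing each piece via $\frac{n!}{(k+1)!(k-1)!(n-k+1)!(n-k-1)!}$-type ratios as a difference of two binomial products is what yields the two sums with opposite signs.

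The main obstacle I anticipate is purely bookkeeping: getting the normalization constant on the $a_n$ term to come out as exactly $4$, and correctly identifying which polynomial weight $p(k,n)$ multiplies the convolution so that $p(k,n)\cdot\frac{1}{(k+1)!(k-1)!(n-k+1)!(n-k-1)!}$ factors as $\frac{1}{(n-1)!^2}\big[\binom{n-1}{k-1}\binom{n-1}{k} - \binom{n-1}{k-2}\binom{n-1}{k+1}\big]$ after multiplying through by the appropriate power of $(n-1)!$. This is a finite algebraic identity in $k$ and $n$ that can be checked by clearing denominators and comparing polynomials, but tracking the shifts in summation index introduced by each application of $\frac{d}{dx}$ (which turn $x^{2j-2}\mapsto (2j-2)x^{2j-3}$, etc.) requires care — in particular the lower sum runs only over $2\le k\le n-2$ precisely because the boundary terms $k=1$ and $k=n-1$ of the shifted convolution either vanish or get absorbed into the linear term. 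An alternative, and perhaps cleaner, route to avoid some of this is to combine \eqref{rec-11} from Proposition \ref{recu-fora} with the Pascal-type identity $\binom{n}{k-1}\binom{n}{k+1} = \binom{n-1}{k-1}\binom{n-1}{k} + \binom{n-1}{k-2}\binom{n-1}{k} + \binom{n-1}{k-1}\binom{n-1}{k+1} + \binom{n-1}{k-2}\binom{n-1}{k+1}$ and a symmetrization of the middle cross terms under $k\leftrightarrow n-k$; then the proposition would follow by elementary manipulation of \eqref{rec-11} alone, with no further Bessel-function input. I would try this second approach first, falling back on the Riccati computation only if the binomial algebra does not close.
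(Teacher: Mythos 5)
Your instinct to derive the proposition from \eqref{rec-11} plus a purely binomial identity is exactly right --- that is what the paper does --- but the specific mechanism you propose for closing the binomial algebra does not work, and the Riccati route is left as a plan with all the decisive constants unresolved. The paper's proof is a one-line substitution into \eqref{rec-11} of the identity
\begin{equation*}
\binom{n}{k-1}\binom{n}{k+1}
= \frac{n}{2}\left[ \binom{n-1}{k-1}\binom{n-1}{k} - \binom{n-1}{k-2}\binom{n-1}{k+1} \right],
\end{equation*}
which turns $2na_{n}=\sum_{k}\binom{n}{k-1}\binom{n}{k+1}a_{k}a_{n-k}$ directly into the stated formula (the second sum truncates to $2\le k\le n-2$ because the extreme binomials vanish). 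This identity is multiplicative, carrying the factor $n/2$ and a genuine minus sign; it is checked by a short factorial computation, the core of which is $(k+1)(n-k+1)-(k-1)(n-k-1)=2n$.

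Your preferred route instead expands $\binom{n}{k-1}\binom{n}{k+1}$ by Pascal's rule into the four nonnegative products $\binom{n-1}{k-1}\binom{n-1}{k}$, $\binom{n-1}{k-2}\binom{n-1}{k}$, $\binom{n-1}{k-1}\binom{n-1}{k+1}$, $\binom{n-1}{k-2}\binom{n-1}{k+1}$ and hopes that symmetrizing the middle cross terms under $k\leftrightarrow n-k$ finishes the job. It cannot: under $k\mapsto n-k$ those two cross products are exchanged, so against the symmetric weight $a_{k}a_{n-k}$ their contributions are equal and \emph{add} rather than cancel; moreover, an additive expansion into nonnegative terms can never produce the minus sign in front of $\binom{n-1}{k-2}\binom{n-1}{k+1}$, nor the factor $n/2$ needed to pass from $2na_{n}$ to $4a_{n}$. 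So the ``elementary manipulation of \eqref{rec-11} alone'' you describe is missing its essential ingredient, namely the displayed identity above. The alternative Bessel/Riccati computation might be made to work (it is how \eqref{rec-11} itself arises), but as written it stops at the level of ``the constant should work out to $4$'' and ``some polynomial in $k$ and $n$'', so it does not constitute a proof either.
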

\begin{proof}
This follows from \eqref{rec-11} and the identity 
\begin{equation*}
\binom{n}{k-1} \binom{n}{k+1} = \frac{n}{2} 
\left[ \binom{n-1}{k-1} \binom{n-1}{k} - \binom{n-1}{k-2} \binom{n-1}{k+1} 
\right].
\end{equation*}
\end{proof}

\begin{corollary}
The numbers $a_{n}$ are nonnegative integers. Moreover $a_{n}$ is 
even if $n$ is odd.
\end{corollary}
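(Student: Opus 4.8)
The plan is to prove both assertions by a single strong induction on $n$, nonnegativity already being Corollary \ref{nonneg}. The driving identity is Proposition \ref{recu-fora} rewritten, via $\binom{n}{k-1}\binom{n}{k+1}=\tfrac{n}{2}\sigma_{n,k}$ (this is essentially Proposition \ref{prop-rec}), as
\[
4a_{n} \;=\; \sum_{k=1}^{n-1} \sigma_{n,k}\, a_{k} a_{n-k}.
\]
Assuming inductively that $a_{1},\dots ,a_{n-1}$ are integers with $a_{j}$ even whenever $j<n$ is odd, this identity already gives $4a_{n}\in\mathbb{Z}$; the entire difficulty is to recover the two missing factors of $2$ (a third one, when $n$ is odd, to get evenness). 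The base case $n=1$ ($a_{1}=2$) is immediate.

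The step that makes this possible is a divisibility property of $\sigma_{n,k}$. From $j\binom{n}{j}=n\binom{n-1}{j-1}$ one gets, for odd $j$, $v_{2}\!\bigl(\binom{n}{j}\bigr)=v_{2}(n)+v_{2}\!\bigl(\binom{n-1}{j-1}\bigr)\ge v_{2}(n)$, where $v_{2}$ is the $2$-adic valuation. Applying this with $j=k-1$, which is odd exactly when $k$ is even, shows $2^{v_{2}(n)}\mid\binom{n}{k-1}\binom{n}{k+1}$; together with the integrality of $\sigma_{n,k}=\tfrac{2}{n}\binom{n}{k-1}\binom{n}{k+1}$ (so the odd part of $n$ also divides $\binom{n}{k-1}\binom{n}{k+1}$) this gives $n\mid\binom{n}{k-1}\binom{n}{k+1}$, i.e. $\sigma_{n,k}$ is \emph{even whenever $k$ is even}. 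Two variants of the same computation show that $\sigma_{n,k}$ is even for \emph{all} $k$ when $n$ is odd (since then $\gcd(n,2)=1$ forces $n\mid\binom{n}{k-1}\binom{n}{k+1}$ directly), and that $\sigma_{n,n/2}=\tfrac1p\binom{2p}{p-1}^{2}$ (with $n=2p$) is divisible by $8$ when $p$ is even.

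The induction step is then parity bookkeeping built on the symmetry $\sigma_{n,k}=\sigma_{n,n-k}$. If $n$ is odd, each pair $\{k,n-k\}$ has exactly one odd member, so $a_{k}a_{n-k}$ is even and $\sigma_{n,k}$ is even; hence each summand is divisible by $4$, and pairing the (evenly many) summands supplies one more factor $2$, so $8\mid 4a_{n}$ and $a_{n}$ is an even integer. If $n$ is even, split the sum by the parity of $k$: for $k$ odd, $n-k$ is odd as well, so $a_{k}a_{n-k}$ is divisible by $4$; for $k$ even, $\sigma_{n,k}$ is even, so pairing $k\leftrightarrow n-k$ makes the paired contribution divisible by $4$, while the lone self-paired term $\sigma_{n,n/2}a_{n/2}^{2}$ is divisible by $4$ too — from $8\mid\sigma_{n,n/2}$ if $n/2$ is even, and from $a_{n/2}$ being even if $n/2$ is odd. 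Thus $4\mid 4a_{n}$, i.e. $a_{n}\in\mathbb{Z}$, completing the induction.

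The delicate point is precisely this $2$-adic accounting when $v_{2}(n)$ is large (e.g. $n\equiv0\bmod4$): the crude bound $v_{2}\!\bigl(\binom{n}{k-1}\binom{n}{k+1}\bigr)\ge2$ is then insufficient, and one genuinely needs $v_{2}\!\bigl(\binom{n}{j}\bigr)=v_{2}(n)+v_{2}\!\bigl(\binom{n-1}{j-1}\bigr)$ for odd $j$, both for the bulk even-$k$ terms and for the middle term $\sigma_{2p,p}a_{p}^{2}$. Everything else is routine.
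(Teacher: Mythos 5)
Your argument is correct, but it reaches integrality by a genuinely different route than the paper. The paper proves the corollary by simultaneous induction using \emph{two} recurrences: integrality comes from the linear relation \eqref{relation-1a} (equivalently \eqref{def-an}), where for $n$ odd one halves $\sigma_{n,r}$ of \eqref{sigma-def} directly, and for $n$ even one uses evenness of $a_{r}$ for odd $r$ together with the auxiliary identity $\sigma_{n,r}=\tfrac{2}{r-1}\binom{n-1}{r-2}\binom{n+1}{r+1}$ for even $r$; only the evenness statement for odd $n$ is extracted from the quadratic recurrence \eqref{rec-11}, by the same pairing of $k$ with $n-k$ that you use. You instead work entirely from \eqref{rec-11}, rewritten as $4a_{n}=\sum_{k}\tfrac{2}{n}\binom{n}{k-1}\binom{n}{k+1}a_{k}a_{n-k}$, and recover the missing powers of $2$ by $2$-adic valuation of the binomials (via $j\binom{n}{j}=n\binom{n-1}{j-1}$ for odd $j$), symmetry pairing, and a separate analysis of the middle term, where your claim $\nu_{2}\bigl(\tfrac{1}{p}\binom{2p}{p-1}^{2}\bigr)\geq 3$ for even $p$ indeed follows from $\nu_{2}\binom{2p}{p-1}\geq \nu_{2}(2p)$. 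Two remarks: your $\sigma_{n,k}=\tfrac{2}{n}\binom{n}{k-1}\binom{n}{k+1}$ is \emph{not} the paper's $\sigma_{n,r}$ from \eqref{sigma-def} (which has $\binom{n+1}{r+1}$), and its integrality, which you assert parenthetically and then lean on to get the odd part of $n$ into the binomial product, deserves an explicit justification --- it is immediate from the identity in the proof of Proposition \ref{prop-rec}, since it equals $\binom{n-1}{k-1}\binom{n-1}{k}-\binom{n-1}{k-2}\binom{n-1}{k+1}$. The trade-off: the paper's use of the linear recurrence is softer (each term contains a single $a_{j}$, so one factor of $2$ from $\sigma_{n,r}/2$ or from $a_{j}$ suffices, and no middle-term or high $\nu_{2}(n)$ analysis is needed), while your proof is self-contained in the sense that it derives everything from the single quadratic recurrence that also yields positivity and monotonicity, at the price of the sharper $2$-adic bookkeeping you correctly identify as the delicate point.
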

\begin{proof}
Corollary \ref{nonneg} shows $a_{n} > 0$. It remains to show $a_{n} \in 
\mathbb{Z}$ and to verify the parity statement.  This is achieved by 
simultaneous induction on $n$. 

\smallskip

Assume first $n = 2m+1$ is odd. Then \eqref{sigma-def} shows that 
$\tfrac{1}{2}\sigma_{n,r}  \in 
\mathbb{Z}$ and \eqref{relation-1}, written as 
\begin{equation}
a_{n} = (-1)^{n-1} \left[ 2 + \sum_{r=1}^{n-1} 
\frac{\sigma_{n,r}}{2} a_{r} \right],
\label{relation-1a}
\end{equation}
\noindent
proves that $a_{n} \in \mathbb{Z}$. Now write \eqref{rec-11} as 
\begin{equation}
2(2m+1) a_{2m+1} = 2 \sum_{k=1}^{m} \binom{2m+1}{k-1} \binom{2m+1}{k+1}
a_{k} a_{2m+1-k}
\end{equation}
\noindent 
and observe that either $k$ or $2m+1-k$ is odd. The induction hypothesis
shows that either $a_{k}$ or $a_{2m+1-k}$ is even. This shows 
$a_{2m+1}$ is even. 

\smallskip 

Now consider the case $n = 2m$ even. If $r$ is odd, then $a_{r}$ is even; if 
$r$ is even then $r-1$ is odd and $\tfrac{1}{2}\sigma_{n,r} \in \mathbb{Z}$
in view of the identity
\begin{equation}
\sigma_{n,r} = \frac{2}{r-1} \binom{n-1}{r-2} \binom{n+1}{r+1}.
\end{equation}
\noindent
The result follows again from \eqref{relation-1a}.
\end{proof}

\begin{corollary}
The numbers $A_{n}$ are nonnegative integers.
\end{corollary}

The recurrence in Proposition \ref{recu-fora} is now employed to 
prove that $\{ a_{n} \}$ is an increasing sequence. The first few values 
are $2, \, 1, \, 2, \, 8, \, 52$.

\begin{theorem}
For $n \geq 3$, the inequality $a_{n} >  a_{n-1}$ holds.
\end{theorem}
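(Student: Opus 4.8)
The plan is to argue by induction on $n$ using the quadratic recurrence \eqref{rec-11}, namely $2na_n = \sum_{k=1}^{n-1}\binom{n}{k-1}\binom{n}{k+1}a_k a_{n-k}$. First I would check the base cases directly from the list $a_1=1,\ a_2=1,\ a_3=2,\ a_4=8,\ a_5=52$: indeed $a_3>a_2$, $a_4>a_3$, $a_5>a_4$. For the inductive step, assume $a_2 \leq a_3 \leq \cdots \leq a_{n-1}$ (with strict increase from index $3$ on), and in particular that all these are positive integers, which is already guaranteed by the corollaries above. The goal is to compare $2na_n$ with $2(n-1)a_{n-1}$.

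The key idea is to bound each summand of $2na_n$ from below by a corresponding summand of $2(n-1)a_{n-1}$. Writing out $2(n-1)a_{n-1} = \sum_{k=1}^{n-2}\binom{n-1}{k-1}\binom{n-1}{k+1}a_k a_{n-1-k}$, I would pair the $k$-th term of this sum with the $k$-th term of the sum for $2na_n$. Since $a_{n-k} \geq a_{n-1-k}$ by the induction hypothesis (valid as long as the relevant indices are $\geq 2$; the boundary indices $k$ near $1$ or $n-1$ need to be inspected by hand), it suffices to show the binomial coefficients satisfy $\binom{n}{k-1}\binom{n}{k+1} \geq \binom{n-1}{k-1}\binom{n-1}{k+1}$, which is immediate since $\binom{n}{j} \geq \binom{n-1}{j}$ for all $j \geq 0$. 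This already shows the sum for $2na_n$ termwise dominates that for $2(n-1)a_{n-1}$, except that $2na_n$ has an extra top term at $k=n-1$ (equal to $\binom{n}{n-2}\binom{n}{n}a_{n-1}a_1 = \binom{n}{2}a_{n-1}$), which is strictly positive. Hence $2na_n > 2(n-1)a_{n-1}$, and since $2n > 2(n-1) > 0$ this would at first glance seem to give $a_n$ compared to $a_{n-1}$ — but one must be careful: $2na_n > 2(n-1)a_{n-1}$ does \emph{not} by itself yield $a_n > a_{n-1}$, because the coefficient on the left is larger.

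The main obstacle, therefore, is sharpening the termwise comparison enough to absorb the factor $n/(n-1)$. Concretely, I need $2na_n \geq \frac{n}{n-1}\cdot 2(n-1)a_{n-1} = 2na_{n-1}$, i.e. I must show $\sum_{k=1}^{n-1}\binom{n}{k-1}\binom{n}{k+1}a_k a_{n-k} \geq 2na_{n-1}$ with strict inequality. One clean way: use just the two extreme terms $k=1$ and $k=n-1$, each equal to $\binom{n}{2}a_1a_{n-1} = \binom{n}{2}a_{n-1} = \frac{n(n-1)}{2}a_{n-1}$, so these two terms alone contribute $n(n-1)a_{n-1}$; since all other terms are nonnegative, $2na_n \geq n(n-1)a_{n-1}$, giving $a_n \geq \tfrac{n-1}{2}a_{n-1} > a_{n-1}$ as soon as $n-1 > 2$, i.e. $n \geq 4$, with the case $n=3$ handled by the base check. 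This avoids the induction hypothesis entirely for the increasing property and reduces everything to positivity of $a_k$ (Corollary \ref{nonneg}) plus evaluation of the boundary binomial coefficients; I would double-check the edge case $n=3$ where the sum has only the single term $k=1$ giving $6a_3 = \binom{3}{0}\binom{3}{2}a_1a_2 = 3$, wait — this forces re-examination, so in the final write-up I would state the recurrence range carefully and verify small cases explicitly before invoking the two-extreme-terms bound for $n\geq 4$.
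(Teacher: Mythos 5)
Your final ``clean way'' is exactly the paper's proof: keep only the extreme terms $k=1$ and $k=n-1$ in the recurrence $2na_{n} = \sum_{k=1}^{n-1}\binom{n}{k-1}\binom{n}{k+1}a_{k}a_{n-k}$ and use nonnegativity of the remaining terms, so the approach is the same. The only wrinkle is the value of $a_{1}$: the initial condition $a_{1}=1$ stated alongside the recurrence is a slip in the paper, and the correct value is $a_{1}=2$ (consistent with the listed values $2,1,2,8,52$ and with $a_{1}=2A_{1}/C_{1}$). With $a_{1}=2$ each extreme term equals $n(n-1)a_{n-1}$, so $a_{n}\geq (n-1)a_{n-1}$ and hence $a_{n}-a_{n-1}\geq (n-2)a_{n-1}>0$ for all $n\geq 3$ with no separate base case; this also dissolves your $n=3$ worry, since that sum has the two terms $k=1,2$, each equal to $3a_{1}a_{2}=6$, giving $6a_{3}=12$ and $a_{3}=2$ as expected.
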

\begin{proof}
Take the terms $k=1$ and $k=n-1$ in the sum appearing in the 
recurrence in Proposition \eqref{recu-fora} and use $a_{n} > 0$ to obtain 
\begin{equation}
a_{n} \geq \frac{1}{2n} \left[ \binom{n}{0} \binom{n}{2} a_{1}a_{n-1}+ 
\binom{n}{n-2} \binom{n}{2} a_{n-1}a_{1} \right].
\end{equation}
\noindent
Since $a_{1}=2$ the previous inequality yields
\begin{equation}
a_{n} \geq (n-1)a_{n-1}.
\end{equation}
\noindent
Hence, for $n \geq 3$, this gives $a_{n} - a_{n-1} \geq (n-2)a_{n-1} > 0$.
\end{proof}

\medskip

\section{An expression in forms of determinants} 
\label{S:determinants} 

The recursion relation \eqref{def-an} expressed in the form
\begin{equation}
\sum_{j=1}^{m} (-1)^{j-1} \binom{m}{j-1} \binom{m+1}{j+1}a_{j} = 2m
\end{equation}
\noindent
is now employed to produce a system of equations for the numbers 
$a_{n}$ by varying $m$ through $1, \, 2, \, 3, \cdots, n$.  The coefficient 
matrix has determinant $(-1)^{\binom{n}{2}} n!$ and Cram\'{e}r's rule gives 

\smallskip

\begin{equation}
a_{n} = \frac{(-1)^{n-1}}{n!} \det 
\begin{pmatrix} 
\binom{1}{1-1} \binom{1+1}{1+1} & 0 & 0 & \cdots & 0  & 2 \\
\binom{2}{1-1} \binom{2+1}{1+1} & \binom{2}{2-1} \binom{2+1}{2+1} 
 & 0 & \cdots & 0  & 4 \\
\binom{3}{1-1} \binom{3+1}{1+1} & \binom{3}{2-1} \binom{3+1}{2+1} 
 & \binom{3}{3-1} \binom{3+1}{3+1} & \cdots & 0  & 6 \\
\cdots &  \cdots &  \cdots & \cdots & \cdots & \cdots \\
\cdots &  \cdots &  \cdots & \cdots & \cdots & \cdots \\
\binom{n}{1-1} \binom{n+1}{1+1} & \binom{n}{2-1} \binom{n+1}{2+1} & 
\binom{n}{3-1} \binom{n+1}{3+1} & \cdots & \binom{n}{n-2} \binom{n+1}{n} 
& 2n 
\end{pmatrix}
\end{equation}

\noindent
The power of $-1$ is eliminated by permuting the columns to produce the 
matrix
\begin{equation}
B_{n} = \begin{pmatrix}
2 & \binom{1}{1-1} \binom{1+1}{1+1} & 0 & 0 & 0  \\
4 & \binom{2}{1-1} \binom{2+1}{1+1} & \binom{2}{2-1} \binom{2+1}{2+1} & 0 
& \cdots \\
6 & \binom{3}{1-1} \binom{3+1}{1+1} & \binom{3}{2-1} \binom{3+1}{2+1} & 0 
& \cdots \\
\cdots & \cdots & \cdots &  \cdots & \cdots \\
\cdots & \cdots & \cdots &  \cdots & \cdots \\
2n & \binom{n}{1-1} \binom{n+1}{1+1} & \binom{n}{2-1} \binom{n+1}{2+1} 
& \binom{n}{3-1} \binom{n+1}{3+1}  \cdots & \binom{n}{n-2} \binom{n+1}{n}
\end{pmatrix}.
\label{matrix-b}
\end{equation}
\noindent
The representation of $a_{n}$ in terms of determinants is given in the 
next result.

\begin{prop}
The number $a_{n}$ is given by 
\begin{equation}
a_{n} = \frac{\det B_{n} }{n!}
\label{new-def-an}
\end{equation}
\noindent
where $B_{n}$ is the matrix in \eqref{matrix-b}.
\end{prop}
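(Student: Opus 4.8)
The plan is to read the displayed identity just before the proposition,
\begin{equation*}
\sum_{j=1}^{m} (-1)^{j-1} \binom{m}{j-1} \binom{m+1}{j+1} a_{j} = 2m ,
\end{equation*}
as the assertion that the vector $(a_{1}, \dots, a_{n})^{T}$ solves the linear system $M_{n}\,\mathbf{a} = \mathbf{b}$, where $M_{n}$ is the $n \times n$ lower triangular matrix with $(m,j)$ entry $(-1)^{j-1}\binom{m}{j-1}\binom{m+1}{j+1}$ (which vanishes when $j > m$) and $\mathbf{b} = (2, 4, \dots, 2n)^{T}$. The first step is to record that the diagonal entries are $(M_{n})_{m,m} = (-1)^{m-1}\binom{m}{m-1}\binom{m+1}{m+1} = (-1)^{m-1} m$, so that
\begin{equation*}
\det M_{n} = \prod_{m=1}^{n} (-1)^{m-1} m = (-1)^{\binom{n}{2}}\, n!,
\end{equation*}
using $\sum_{m=1}^{n}(m-1) = \binom{n}{2}$; in particular $M_{n}$ is invertible and Cram\'{e}r's rule applies.

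Next I would apply Cram\'{e}r's rule to isolate $a_{n}$: it equals $\det \widetilde{M}_{n} / \det M_{n}$, where $\widetilde{M}_{n}$ is obtained from $M_{n}$ by replacing its last column with $\mathbf{b}$; this is precisely the determinant displayed in the excerpt before \eqref{matrix-b}. To pass from $\widetilde{M}_{n}$ to $B_{n}$ I would perform two column operations and track the effect on the determinant: (i) pull the scalar $(-1)^{j-1}$ out of column $j$ for each $j = 1, \dots, n-1$, which contributes a factor $\prod_{j=1}^{n-1}(-1)^{j-1} = (-1)^{\binom{n-1}{2}}$; and (ii) move the last column (now equal to $\mathbf{b}$) to the front, a cyclic shift of $n$ columns with sign $(-1)^{n-1}$. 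The array that results is exactly $B_{n}$, since after the shift the $(m,j)$ entry for $j \geq 2$ is $\binom{m}{j-2}\binom{m+1}{j}$, so that
\begin{equation*}
\det \widetilde{M}_{n} = (-1)^{\binom{n-1}{2}}(-1)^{n-1}\det B_{n}.
\end{equation*}

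Combining the two computations gives
\begin{equation*}
a_{n} = \frac{(-1)^{\binom{n-1}{2}}(-1)^{n-1}\det B_{n}}{(-1)^{\binom{n}{2}}\, n!},
\end{equation*}
and the identity $\binom{n}{2} = \binom{n-1}{2} + (n-1)$ shows the sign prefactor equals $1$, which yields $a_{n} = \det B_{n} / n!$. The only genuine work here is the sign bookkeeping in the two column manipulations; the one point I would double-check is that the array written in \eqref{matrix-b} is literally the matrix produced by operations (i)--(ii), i.e.\ that the index shift in its columns is accounted for correctly, which follows at once from the definition of $M_{n}$ and \eqref{def-an}. A small-case check ($n = 1, 2$) can be used to confirm the final normalization.
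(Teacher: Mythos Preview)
Your proposal is correct and follows essentially the same route as the paper: form the lower-triangular linear system from the recurrence, apply Cram\'er's rule using $\det M_{n}=(-1)^{\binom{n}{2}}n!$, then remove the column signs and cyclically shift the last column to the front to reach $B_{n}$. The paper does exactly these two steps (it displays the intermediate determinant with the $(-1)^{n-1}/n!$ prefactor and then says ``the power of $-1$ is eliminated by permuting the columns''); your write-up simply makes the sign bookkeeping explicit.
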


Recall that an \textit{upper Hessenberg matrix} is one of the form 
\begin{equation}
H_{n} = \begin{pmatrix}
\beta_{1,1} & \beta_{1,2} & 0 & 0 & 0 & \cdots & \cdots & 0 & 0 \\
\beta_{2,1} & \beta_{2,2} & \beta_{2,3} & 0 & 0 & \cdots & \cdots & 0 & 0 \\
\beta_{3,1} & \beta_{3,2} & \beta_{3,3} & \beta_{3,4} & 0 & \cdots & \cdots & 0 & 0 \\
\cdots & \cdots & \cdots & \cdots  & 
\cdots & \cdots & \cdots &  \cdots & \cdots \\
\cdots & \cdots & \cdots & \cdots  & 
\cdots & \cdots & \cdots &  \cdots & \cdots \\
\beta_{n,1} & \beta_{n,2} & \beta_{n,3} & \beta_{n,4} & 
\cdots & \cdots & \cdots &  \beta_{n,n-1} & \beta_{n,n} 
\end{pmatrix}
\end{equation}
\noindent
The matrix $B$ is of this form with 
\begin{equation}
\beta_{i,j} = 
\begin{cases}
2i  & \quad \text{ if } 1 \leq i \leq n  \quad \text{ and } j = 1 \\
\binom{i}{j-2} \binom{i+1}{j}  & \quad \text{ if } j-1 \leq i \leq n 
 \text{ and } j > 1.
\end{cases}
\end{equation}

It turns out that the recurrence \eqref{def-an} used to define the numbers 
$a_{n}$ can be recovered if one employs \eqref{new-def-an}. 

\begin{prop}
Define $\alpha_{n}$ by 
\begin{equation}
\alpha_{n} = \frac{\det B_{n} }{n!}
\end{equation}
\noindent
where $B$ is the matrix \eqref{matrix-b}. Then $\{ \alpha_{n} \}$ 
satisfies the recursion 
\begin{equation}
(-1)^{n-1} \alpha_{n} = 2 + \sum_{j=1}^{n-1} (-1)^{j} \binom{n-1}{j-1} 
\binom{n+1}{j+1} \frac{\alpha_{j}}{n-j+1}
\label{recu-alpha}
\end{equation}
\noindent
and the initial condition $\alpha_{1} = 1$. Therefore $\alpha_{n} = a_{n}$.
\end{prop}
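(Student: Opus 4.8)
The plan is to evaluate $\det B_n$ by the cofactor expansion along its last row, exploiting the upper Hessenberg shape of $B_n$ --- this is the ``standard procedure'' alluded to before the statement. That expansion produces a linear recursion for $\alpha_n = \det B_n/n!$ which, after a short binomial rearrangement, coincides verbatim with \eqref{recu-alpha}; since \eqref{recu-alpha} is the very recursion \eqref{def-an} that defines $\{a_n\}$, the equality $\alpha_n = a_n$ then follows by induction.

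First I would expand $\det B_n$ along row $n$:
\begin{equation*}
\det B_n = \sum_{k=1}^{n} (-1)^{n+k}\, \beta_{n,k}\, M_{n,k},
\end{equation*}
where $M_{n,k}$ is the minor obtained by deleting row $n$ and column $k$. The crucial step is that each $M_{n,k}$ factors. After deleting row $n$ and column $k$, partition the surviving rows $1,\dots,n-1$ as $\{1,\dots,k-1\}\cup\{k,\dots,n-1\}$ and the surviving columns $\{1,\dots,k-1\}\cup\{k+1,\dots,n\}$ likewise. The Hessenberg property $\beta_{i,j}=0$ for $j\ge i+2$ forces the block on rows $1,\dots,k-1$ and columns $k+1,\dots,n$ to vanish; the block on rows $1,\dots,k-1$ and columns $1,\dots,k-1$ is exactly $B_{k-1}$; and the block on rows $k,\dots,n-1$ and columns $k+1,\dots,n$ is lower triangular with diagonal entries the superdiagonal entries $\beta_{i,i+1}=\binom{i}{i-1}\binom{i+1}{i+1}=i$ for $i=k,\dots,n-1$. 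Hence, with the conventions $\det B_0=1$ and empty product equal to $1$,
\begin{equation*}
M_{n,k} = \det B_{k-1}\prod_{i=k}^{n-1} i = \det B_{k-1}\,\frac{(n-1)!}{(k-1)!}.
\end{equation*}

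Substituting this, together with $\det B_m = m!\,\alpha_m$ (so $\alpha_0=1$), $\beta_{n,1}=2n$, and $\beta_{n,k}=\binom{n}{k-2}\binom{n+1}{k}$ for $k\ge 2$, and dividing by $(n-1)!$ yields
\begin{equation*}
n\,\alpha_n = (-1)^{n+1}2n + \sum_{k=2}^{n}(-1)^{n+k}\binom{n}{k-2}\binom{n+1}{k}\alpha_{k-1}.
\end{equation*}
Dividing by $n$, re-indexing by $j=k-1$, using the elementary identity $\tfrac1n\binom{n}{j-1}=\tfrac1{n-j+1}\binom{n-1}{j-1}$, and multiplying through by $(-1)^{n-1}$ turns this into exactly \eqref{recu-alpha}. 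Since \eqref{recu-alpha} is identical to \eqref{def-an}, and at $n=1$ both recursions have empty sum and hence pin down the same first value, an induction on $n$ gives $\alpha_n=a_n$ for all $n$.

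The genuinely substantive point is the block factorization of the minors $M_{n,k}$; everything after it is bookkeeping. I expect the only spots requiring care to be tracking the cofactor signs $(-1)^{n+k}$ through the re-indexing and the final multiplication by $(-1)^{n-1}$, and checking the boundary terms $k=1$ and $k=n$ of the sum (these produce the constant term $2$ and the leading term $\alpha_{n-1}$, respectively).
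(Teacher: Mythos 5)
Your proof is correct and follows essentially the same route as the paper: the paper invokes the standard recurrence $\det H_{n}=\sum_{r}(-1)^{n-r}\beta_{n,r}\det H_{r-1}\prod_{i=r}^{n-1}\beta_{i,i+1}$ for upper Hessenberg determinants and then performs exactly the bookkeeping you describe, while you additionally derive that recurrence by last-row cofactor expansion and the block factorization of the minors. The only extra content in your write-up is this justification of the cited Hessenberg formula (and your empty-sum remark correctly fixes the value at $n=1$, where the stated initial condition $\alpha_{1}=1$ is in fact a typo for $\alpha_{1}=a_{1}=2$).
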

\begin{proof}
For convenience define $\det H_{0} = 1$. The determinant of a Hessenberg 
matrix satisfies the recurrence 
\begin{equation}
\det H_{n} = \sum_{r=1}^{n} (-1)^{n-r} \beta_{n,r} \det H_{r-1} 
\prod_{i=r}^{n-1} \beta_{i,i+1}.
\label{det-hess}
\end{equation}
\noindent
A direct application of \eqref{det-hess} yields
\begin{eqnarray*}
\alpha_{n} & = & \frac{1}{n!} 
\left\{ (-1)^{n-1} (2n) (n-1)! + 
\sum_{r=2}^{n} (-1)^{n-r} \binom{n}{r-2} \binom{n+1}{r} 
\det B_{r-1} \prod_{i=r}^{n-1} i \right\} \\
& = & 2 (-1)^{n-1} + \frac{1}{n!} \sum_{r=2}^{n} (-1)^{n-r} 
\binom{n}{r-2} \binom{n+1}{r} \alpha_{r-1} \, (n-1)! \\
& = & 2 (-1)^{n-1} + \sum_{r=2}^{n} (-1)^{n-r}  \frac{1}{n}
\binom{n}{r-2} \binom{n+1}{r} \alpha_{r-1}  \\
& = & 2 (-1)^{n-1} + \sum_{r=2}^{n} (-1)^{n-r} 
\binom{n}{r-2} \binom{n+1}{r} \frac{\alpha_{r-1}}{n-r+2}   \\
& = & 2 (-1)^{n-1} + (-1)^{n-1} \sum_{r=1}^{n} (-1)^{j} 
\binom{n-1}{j-1} \binom{n+1}{j+1} \frac{\alpha_{j}}{n-j+1}.
\end{eqnarray*}
\noindent 
This is \eqref{recu-alpha}.
\end{proof}

\begin{corollary}
The modified Bessel function of the first kind admits a determinant 
expression
\begin{equation}
I_{1}(x) = x \, \exp \left( \sum_{j=1}^{\infty} 
\frac{(-1)^{j-1} \det B_{j}}{(j+1)! \, j!^{2}} \left( \frac{x}{2} \right)^{2j}
\right).
\end{equation}
\end{corollary}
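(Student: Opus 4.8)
The corollary follows by integrating the identity \eqref{nice-1} and then inserting the determinantal formula $a_{n} = \det B_{n}/n!$ established above. The plan is to recognise the right-hand side of \eqref{nice-1} as (a multiple of) a logarithmic derivative, integrate once from the origin, exponentiate, and rescale.

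First I would multiply \eqref{nice-1} by $x$ to put it in the form
\begin{equation*}
\frac{d}{dx} \log \left( \frac{I_{1}(2x)}{2x} \right)
= \sum_{j=1}^{\infty} \frac{(-1)^{j-1} a_{j}}{(j+1)! \, (j-1)!} \, x^{2j-1}.
\end{equation*}
Both sides are analytic in a neighbourhood of $x=0$, so term-by-term integration from $0$ to $x$ is legitimate. On the right this yields $\sum_{j \geq 1} \frac{(-1)^{j-1} a_{j}}{2 (j+1)! \, j!} x^{2j}$, using $2j\,(j-1)! = 2\cdot j!$. On the left the fundamental theorem of calculus returns $\log(I_{1}(2x)/2x)$ minus its value at $x=0$; the series \eqref{bessel-mod} gives $I_{1}(z) = \tfrac{z}{2} + O(z^{3})$, so $I_{1}(2x)/(2x) \to \tfrac{1}{2}$ and that value is $\log\tfrac12$. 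Exponentiating, absorbing the constant, rescaling $x \mapsto x/2$, and finally substituting $a_{j} = \det B_{j}/j!$ from \eqref{new-def-an} — which turns the denominator $(j+1)!\,j!$ into $(j+1)!\,j!^{2}$ — produces the asserted closed form for $I_{1}(x)$.

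There is no genuine obstacle here; the only thing requiring attention is bookkeeping of constants. One must (i) pin down the constant of integration from the behaviour of $I_{1}$ at the origin, and (ii) keep careful track of the powers of $2$ arising both from the substitution $x \mapsto x/2$ and from passing between $I_{1}(2x)$ and $I_{1}(x)$. The interchange of summation and integration is justified because the power series on the right of \eqref{nice-1} has positive radius of convergence — equivalently, $I_{2}/I_{1}$ is holomorphic at the origin — so every manipulation takes place inside the disk of convergence. Alternatively, one may integrate \eqref{nice-0} directly, using $\tfrac{d}{dz}\log(z^{-1}I_{1}(z)) = I_{2}(z)/I_{1}(z)$ from \eqref{classical}, which reaches $I_{1}(x)$ with no rescaling step at all.
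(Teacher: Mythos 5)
Your proposal is exactly the paper's proof: the authors' entire argument is ``integrate the identity $\tfrac{2I_{2}(2x)}{x\,I_{1}(2x)}=\tfrac{1}{x}\tfrac{d}{dx}\log\tfrac{I_{1}(2x)}{2x}$'' and then invoke $a_{j}=\det B_{j}/j!$, which is precisely the route you take (your alternative via $\tfrac{d}{dz}\log\bigl(z^{-1}I_{1}(z)\bigr)=I_{2}(z)/I_{1}(z)$ is the same computation in the variable $z=2x$). One caveat on your final sentence, though: carrying out the bookkeeping you describe does \emph{not} land on the displayed formula verbatim. Integrating and using $\log\bigl(I_{1}(2x)/(2x)\bigr)\to\log\tfrac12$, then rescaling and substituting $a_{j}=\det B_{j}/j!$, gives
\begin{equation*}
I_{1}(x)=\frac{x}{2}\,\exp\left(\sum_{j=1}^{\infty}\frac{(-1)^{j-1}\det B_{j}}{2\,(j+1)!\,j!^{2}}\left(\frac{x}{2}\right)^{2j}\right),
\end{equation*}
which differs from the corollary as printed by a factor $\tfrac12$ both in the prefactor and in each coefficient of the exponent. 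A check of the low-order terms confirms your version is the correct one: $I_{1}(x)=\tfrac{x}{2}+\tfrac{x^{3}}{16}+\cdots$ and $\det B_{1}=2$ (since $a_{1}=2$), whereas the printed formula would give $x+\tfrac{x^{3}}{4}+\cdots$. So the method is sound and identical to the paper's, but you should state the corrected constants rather than claim the asserted closed form is reached exactly; the discrepancy is a typo in the statement, not a flaw in your argument.
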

\begin{proof}
This follows by integrating the identity
\begin{equation}
\frac{2 I_{2}(2x)}{x \, I_{1}(2x)} = \frac{1}{x} \frac{d}{dx} 
\log \frac{I_{1}(2x)}{2x}.
\end{equation}
\end{proof}

\section{The probabilistic background: conjugate random variables} 
\label{S:prob} 

This section provides the probabilistic tools required for an 
interpretation of the sequence $A_{n}$
defined in \eqref{def-An}. The specific connections are given in Section 
\ref{S:narayana}.

\smallskip

Consider a random variable $X$ with the \textit{symmetric beta distribution}
given in \eqref{semicircular0}.  The 
moments of the symmetric beta distribution, given by 
\begin{equation}
\mathbb{E}\left[X^{n} \right] = \frac{1}{B(\mu + \tfrac{1}{2}, \tfrac{1}{2})} 
\int_{-1}^{1} x^{n} (1-x^{2})^{\mu-1/2} \, dx,
\end{equation}
\noindent
vanish for $n$ odd and for $n = 2m$ they are
\begin{equation}
\mathbb{E}\left[X^{2m} \right] = 
\frac{\Gamma(\mu+1)}{\Gamma(\mu+1+m)} \frac{(2m)!}{2^{2m} \, m!}.
\end{equation}
\noindent
Therefore the moment generating function is 
\begin{equation}
\label{mom-series}
\varphi_{\mu}(t) = \mathbb{E} \left[e^{t X}\right] = 
\sum_{n=0}^{\infty} \mathbb{E} \left[X^{n} \right] \frac{t^{n}}{n!} = 
\Gamma(\mu+1) \sum_{m=0}^{\infty} \frac{t^{2m}}
{2^{2m} \, m! \, \Gamma( \mu + m + 1)}.
\end{equation}
\noindent 
The next proposition summarizes properties of $\varphi_{\mu}(t)$. The first 
one is to recognize the series in \eqref{mom-series} from 
\eqref{bessel-mod}. The zeros $\{j_{\mu,k} \}$ of the Bessel function of the 
first kind 
\begin{equation}
J_{\alpha}(x) = \sum_{j=0}^{\infty} \frac{(-1)^{m}}{m! \, \Gamma(m + 
\alpha + 1)} \left( \frac{x}{2} \right)^{2m+\alpha}
\end{equation}
\noindent
appear in the factorization of $\varphi_{\mu}$ in view of the relation
$I_{\mu}(z) = e^{-\pi i \mu/2} J_{\mu}(iz)$.

\begin{prop}
The 
moment generating function $\varphi_{\mu}(t)$ of a random variable $X \sim 
f_{\mu}$ is given by 
\begin{equation}
\varphi_{\mu}(t) = \Gamma(\mu+1) \left( \frac{2}{t} \right)^{\mu} 
I_{\mu}(t).
\end{equation}
\end{prop}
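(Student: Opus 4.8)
The plan is to start from the moment series already derived in \eqref{mom-series} and match it term-by-term against the series defining the modified Bessel function in \eqref{bessel-mod}. Recall that
\begin{equation*}
\varphi_{\mu}(t) = \Gamma(\mu+1) \sum_{m=0}^{\infty} \frac{t^{2m}}{2^{2m} \, m! \, \Gamma(\mu+m+1)},
\end{equation*}
so the first step is simply to pull the factor $(t/2)^{\mu}$ out of the Bessel series. Writing \eqref{bessel-mod} with $z=t$ and $\alpha=\mu$ (interpreting the factorials as Gamma functions, i.e. $(j+\alpha)! = \Gamma(j+\mu+1)$ and $j! = \Gamma(j+1)$), one has
\begin{equation*}
I_{\mu}(t) = \sum_{j=0}^{\infty} \frac{1}{\Gamma(j+1)\,\Gamma(j+\mu+1)} \left(\frac{t}{2}\right)^{2j+\mu} = \left(\frac{t}{2}\right)^{\mu} \sum_{j=0}^{\infty} \frac{1}{j!\,\Gamma(j+\mu+1)} \left(\frac{t}{2}\right)^{2j}.
\end{equation*}

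Next I would observe that the remaining sum $\sum_{j\ge 0} (t/2)^{2j}/(j!\,\Gamma(j+\mu+1))$ is exactly, up to the constant $\Gamma(\mu+1)$, the sum appearing in \eqref{mom-series}: indeed $(t/2)^{2j} = t^{2j}/2^{2j}$, so the two series coincide with $m \leftrightarrow j$. Therefore
\begin{equation*}
\varphi_{\mu}(t) = \Gamma(\mu+1) \left(\frac{t}{2}\right)^{-\mu} I_{\mu}(t) = \Gamma(\mu+1)\left(\frac{2}{t}\right)^{\mu} I_{\mu}(t),
\end{equation*}
which is the claimed identity. To make this rigorous I would note that the symmetric beta distribution has compact support $[-1,1]$, hence all moments are finite and the moment generating function $\varphi_{\mu}(t) = \mathbb{E}[e^{tX}]$ is entire in $t$; the interchange of expectation and summation in \eqref{mom-series} is justified by dominated convergence (or by absolute convergence of $\sum |t|^{n}/n!$ since $|X|\le 1$), and the Bessel series converges for all $t$, so the term-by-term comparison is legitimate.

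There is essentially no obstacle here beyond bookkeeping: the only mild subtlety is the removable singularity of $(2/t)^{\mu} I_{\mu}(t)$ at $t=0$, which must be read as the value of the convergent power series $\Gamma(\mu+1)\sum_{m\ge 0} t^{2m}/(2^{2m} m!\,\Gamma(\mu+m+1))$, equal to $1$ at $t=0$ as a moment generating function should be. It is worth recording for later use (in the cumulant computations and the factorization via Bessel zeros $\{j_{\mu,k}\}$ mentioned just before the proposition) that the cumulant generating function is then
\begin{equation*}
\psi_{\mu}(t) = \log \varphi_{\mu}(t) = \log \Gamma(\mu+1) + \mu \log\!\left(\frac{2}{t}\right) + \log I_{\mu}(t),
\end{equation*}
and that the relation $I_{\mu}(z) = e^{-\pi i \mu/2} J_{\mu}(iz)$ converts the Hadamard product of $J_{\mu}$ over its real zeros into a product expansion of $\varphi_{\mu}$, which is the route to expressing the cumulants in terms of the Bessel zeta function in Section \ref{S:prob}.
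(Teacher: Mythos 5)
Your proposal is correct and follows essentially the same route as the paper, which proves the proposition simply by recognizing the moment series \eqref{mom-series} as the Bessel series \eqref{bessel-mod} with the factor $(t/2)^{\mu}$ pulled out. The extra remarks on justifying the series interchange and the removable singularity at $t=0$ are fine but add nothing beyond the paper's term-by-term identification.
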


\begin{note}
The 
Catalan numbers $C_{n}$ appear as the even-order moments of $f_{\mu}$ 
when $\mu=1$. More precisely, if $X$ is distributed as $f_{1}$ (written as 
$X \sim f_{1}$), then 
\begin{equation}
\mathbb{E}\left[(2X)^{2n} \right] = C_{n} \text{ and } 
\mathbb{E} \left[(2X)^{2n+1} \right] = 0.
\label{exp-catalan}
\end{equation}
\end{note}

\begin{note}
The moment generating function of $f_{\mu}$ 
admits the Weierstrass product representation
\begin{equation}
\varphi_{\mu}(t) = \prod_{k=1}^{\infty} \left( 1 + \frac{t^{2}}{j_{\mu,k}^{2}} 
\right)
\label{factor}
\end{equation}
\noindent
where $\{ j_{\mu,k} \}$ are the zeros of the Bessel function of the first 
kind $J_{\mu}$. 
\end{note}

\begin{definition}
\label{def-cum}
The \textit{cumulant generating function} is
\begin{eqnarray*}
\psi_{\mu}(t) & = & \log \varphi_{\mu}(t) \\
    & = & \log \left(  \sum_{n=0}^{\infty} 
\mathbb{E} \left[X^{n} \right] \frac{t^{n}}{n!} 
\right).
\end{eqnarray*}
\end{definition}

The product representation of $\varphi_{\mu}(t)$ yields
\begin{eqnarray*}
\log \varphi_{\mu}(t) & = & \sum_{k=1}^{\infty} \log \left( 1 + \frac{t^{2}}
{j_{\mu,k}^{2}} \right) \\
& = & \sum_{k=1}^{\infty} \sum_{n=1}^{\infty} 
\frac{(-1)^{n-1}}{n} \left( \frac{t}{j_{\mu,k}} \right)^{2n} \\
& := & \sum_{n=1}^{\infty} \kappa_{\mu}(n) \frac{t^{n}}{n!}.
\end{eqnarray*}
\noindent
The series converges for $|t| < j_{\mu,1}$. The first Bessel zero satisfies 
$j_{\mu,1} > 0$ for all $\mu \geq 0$. It 
follows that the series has a 
non-zero radius of convergence. 

\smallskip

\begin{note}
The coefficient $\kappa_{\mu}(n)$ is the $n$-th \textit{cumulant} of $X$. 
An expression that links the moments to the cumulants of $X$
is provided by V. P. Leonov and A. N. Shiryaev \cite{leonov-1959a}:
\begin{equation}
\kappa_{\mu}(n) = \sum_{\mathcal{V}} (-1)^{k-1} (k-1)! 
\prod_{i=1}^{k} \mathbb{E}(2X)^{|V_{i}|}
\end{equation}
\noindent
where the sum is over all partitions 
$\mathcal{V} = \{ V_{1}, \, \cdots, V_{k} \}$
of the set $\{ 1, \, 2, \, \ldots, n \}$. 

In the case $\mu = 0$ the moments are Catalan numbers or $0$, in the case 
$\mu=1$ the moments are central binomial coefficients. Therefore, in both 
cases, the 
cumulants $\kappa_{\mu}(n)$ are integers. An expression for the general value 
of $\mu$ involves 
\begin{equation}
\zeta_{\mu}(s) = \sum_{k=1}^{\infty} \frac{1}{j_{\mu,k}^{s}}
\end{equation}
\noindent
the \textit{Bessel zeta function}, sometimes referred as the 
\textit{Rayleigh function}. 
\end{note}

\smallskip

The next result gives an expression for the cumulants of a random 
variable $X$ with a  distribution $f_{\mu}$.  The special case $\mu=1$, 
described in the next section, provides the desired probabilistic 
interpretation of the original sequence $A_{n}$.

\begin{theorem}
Let $X \sim f_{\mu}$. Then 
\begin{equation}
\label{cumu-zeta}
\kappa(n) = 
\begin{cases}
0 & \quad \text{ if } n \text{ is odd},  \\
2 (-1)^{n/2+1} (n-1)! \, \zeta_{\mu}(n) & \quad \text{ if } n \text{ is even}.
\end{cases}
\end{equation}
\end{theorem}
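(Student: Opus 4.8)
The plan is to read off the cumulants directly from the logarithmic series for $\varphi_{\mu}(t)$ that was already derived just before the statement. Recall that the Weierstrass product \eqref{factor} gave
\begin{equation*}
\psi_{\mu}(t) = \log \varphi_{\mu}(t) = \sum_{k=1}^{\infty}\sum_{n=1}^{\infty}\frac{(-1)^{n-1}}{n}\left(\frac{t}{j_{\mu,k}}\right)^{2n}.
\end{equation*}
First I would justify interchanging the two sums: since $|t|<j_{\mu,1}\le j_{\mu,k}$ for all $k$, the double series is absolutely convergent on that disc, so Fubini applies and we may sum over $k$ first. This collapses the inner sum over $k$ into the Bessel zeta value, yielding
\begin{equation*}
\psi_{\mu}(t) = \sum_{n=1}^{\infty}\frac{(-1)^{n-1}}{n}\,\zeta_{\mu}(2n)\,t^{2n}.
\end{equation*}

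Next I would match this against the defining expansion $\psi_{\mu}(t)=\sum_{m\ge1}\kappa(m)\,t^{m}/m!$. Only even powers of $t$ occur, so $\kappa(m)=0$ for $m$ odd, which is the first case. For $m=2n$ even, comparing the coefficient of $t^{2n}$ gives
\begin{equation*}
\frac{\kappa(2n)}{(2n)!} = \frac{(-1)^{n-1}}{n}\,\zeta_{\mu}(2n),
\end{equation*}
hence $\kappa(2n) = (-1)^{n-1}\,\dfrac{(2n)!}{n}\,\zeta_{\mu}(2n) = 2(-1)^{n-1}(2n-1)!\,\zeta_{\mu}(2n)$, using $(2n)!/n = 2\,(2n-1)!$. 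Writing $n=m/2$ and noting $(-1)^{n-1}=(-1)^{m/2+1}$ and $(2n-1)!=(m-1)!$ puts this in exactly the form \eqref{cumu-zeta}.

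The computation itself is essentially a two-line rearrangement, so there is no deep obstacle; the only point requiring a word of care is the legitimacy of the interchange of summation, which I would handle by the absolute-convergence remark above together with the fact—already noted in the excerpt—that $j_{\mu,1}>0$ for $\mu\ge0$, so the series has positive radius of convergence and all manipulations take place strictly inside the disc $|t|<j_{\mu,1}$. One might also remark that since $\zeta_{\mu}(2n)>0$, the sign of $\kappa(2n)$ alternates, which foreshadows the identity $A_n=(-1)^{n+1}\kappa_1(2n)2^{2n}$ announced in the introduction; but that observation is not needed for the proof of the theorem as stated.
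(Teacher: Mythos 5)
Your argument is correct and is essentially the paper's own proof: both rearrange the double series coming from the Weierstrass product \eqref{factor}, sum over $k$ first to produce $\zeta_{\mu}(2n)$, and compare coefficients of $t^{n}$ with the defining expansion of $\psi_{\mu}$; your extra remark justifying the interchange of summation inside $|t|<j_{\mu,1}$ is a welcome (if minor) addition the paper leaves implicit.
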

\begin{proof}
Rearranging the expansion in Definition \ref{def-cum} gives
\begin{eqnarray*}
\log \varphi_{\mu}(t) & = & 
\sum_{k=1}^{\infty} \sum_{n=1}^{\infty} \frac{(-1)^{n+1}}{n} 
\left( \frac{t}{j_{\mu,k}} \right)^{2n} \\
& = & \sum_{n=1}^{\infty} \frac{(-1)^{n+1}}{n} t^{2n} 
\sum_{k=1}^{\infty} \frac{1}{j_{\mu,k}^{2n}}.
\end{eqnarray*}
\noindent
Now compare powers of $t$ in this expansion with the definition 
\begin{equation}
\log \varphi_{\mu}(t) = \sum_{n=1}^{\infty} \kappa_{\mu}(n) \frac{t^{n}}{n!}
\end{equation}
\noindent
to obtain the result.
\end{proof}

The next ingredient in the search for an interpretation of the 
sequence $A_{n}$ is the 
notion of conjugate random variables. The properties described below appear 
in \cite{spitzer-1955a}. A complex-valued random variable $Z$ is called a 
\textit{regular random variable} ($rrv$ for short)  if
 $\mathbb{E} |Z|^{n} < \infty$ for all 
$n \in \mathbb{N}$ and 
\begin{equation}
\mathbb{E}\left[ h(Z) \right] = h \left( \mathbb{E} \left[Z \right] \right)
\label{cancel}
\end{equation}
\noindent
for all polynomials $h$. The class of rrv is closed under compositions
with polynomials (if $Z$ is rrv and $P$ is a polynomial, then $P(Z)$ is rrv) 
and it is also closed under addition of independent rrv.  The basic definition
is stated next.

\begin{definition}
Let $X, \, Y$ be real random variables, not necessarily independent. The 
pair $(X,Y)$ is called 
\textit{conjugate random variables} 
if $Z = X + i Y$ is an rrv. The random variable $X$
is called \textit{self-conjugate} if $Y$ has the same distribution as $X$.
\end{definition}

The property of rrv may be expressed in terms of the function 
\begin{equation*}
\Phi(\alpha, \beta) := \mathbb{E} \left[ \text{exp}(i \alpha X + i \beta Y)
\right]
\end{equation*}

The next theorem gives a condition for $Z = X + i Y$ to be an rrv. The 
random variables $X$ and $Y$ are not necessarily independent.

\begin{theorem}
Let $Z = X+iY$ be a complex valued random variable with $\mathbb{E} 
\left[ Z \right] = 0$ and $\mathbb{E} \left[ Z^{n} \right] < \infty$. Then 
$Z$ is an rrv if and only if $\Phi(\alpha, i \alpha) = 1$ for all 
$\alpha \in \mathbb{C}$.
\end{theorem}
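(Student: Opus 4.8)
The plan is to prove both directions of the equivalence by working with the joint characteristic function $\Phi(\alpha,\beta) = \mathbb{E}[\exp(i\alpha X + i\beta Y)]$ and relating the condition $\Phi(\alpha, i\alpha) = 1$ to the defining cancellation property \eqref{cancel} of a regular random variable. The key observation is that $\Phi(\alpha, i\alpha) = \mathbb{E}[\exp(i\alpha X - \alpha Y)] = \mathbb{E}[\exp(i\alpha(X+iY))] = \mathbb{E}[\exp(i\alpha Z)]$, so the stated condition says exactly that the "exponential moment generating function" of $Z$ is identically $1$; since $\mathbb{E}[Z] = 0$, this is precisely $\mathbb{E}[\exp(i\alpha Z)] = \exp(i\alpha \mathbb{E}[Z])$, i.e., the cancellation property \eqref{cancel} for the particular family of functions $h(z) = e^{i\alpha z}$.

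For the forward direction, suppose $Z$ is an rrv. Then by definition \eqref{cancel} holds for all polynomials $h$; I would apply it to the partial sums $h_N(z) = \sum_{k=0}^{N} (i\alpha z)^k / k!$ and pass to the limit $N \to \infty$. The interchange of limit and expectation is justified by the hypothesis $\mathbb{E}|Z|^n < \infty$ for all $n$ together with dominated convergence (or by the assumed finiteness of all moments giving absolute convergence of $\sum \mathbb{E}[Z^n](i\alpha)^n/n!$ — here one should be slightly careful, as this requires the moment series to have infinite radius of convergence, which is the kind of tacit regularity one reads into Spitzer's setup). This yields $\mathbb{E}[e^{i\alpha Z}] = e^{i\alpha \mathbb{E}[Z]} = 1$, which is $\Phi(\alpha, i\alpha) = 1$.

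For the converse, suppose $\Phi(\alpha, i\alpha) = 1$ for all $\alpha \in \mathbb{C}$. Expanding $\mathbb{E}[e^{i\alpha Z}] = \sum_{n \geq 0} \mathbb{E}[Z^n] (i\alpha)^n / n!$ and equating with the constant $1$, one reads off $\mathbb{E}[Z^n] = 0$ for all $n \geq 1$ (and $\mathbb{E}[Z^0] = 1$). Consequently $\mathbb{E}[Z^n] = (\mathbb{E}[Z])^n = 0^n$ for $n \geq 1$, so the cancellation property \eqref{cancel} holds for every monomial, hence by linearity for every polynomial $h$. Together with $\mathbb{E}|Z|^n < \infty$ this is exactly the definition of an rrv.

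The main obstacle is analytic rather than algebraic: justifying the termwise manipulation of the series $\mathbb{E}[e^{i\alpha Z}] = \sum_n \mathbb{E}[Z^n](i\alpha)^n/n!$ — both the interchange of expectation and infinite sum, and the claim that a function analytic in $\alpha$ which is constant must have vanishing Taylor coefficients. The cleanest route is to note that the stated identity is assumed for \emph{all} complex $\alpha$, so one may treat $g(\alpha) := \mathbb{E}[e^{i\alpha Z}]$ as an entire function of $\alpha$ (finiteness of all moments plus a standard growth estimate gives entireness), and an entire function identically equal to $1$ has all higher derivatives zero at $0$; differentiating under the expectation then gives $\mathbb{E}[Z^n] = 0$. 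I would phrase the argument in this way and cite \cite{spitzer-1955a} for the underlying regularity framework rather than belaboring the convergence estimates.
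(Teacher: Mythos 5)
The paper itself offers no proof of this theorem: it is imported from Spitzer \cite{spitzer-1955a} as background on regular random variables, so there is no internal argument to compare yours against. On its own terms your outline is the natural one: the identification $\Phi(\alpha,i\alpha)=\mathbb{E}[e^{i\alpha Z}]$, the forward implication by applying the cancellation property \eqref{cancel} to the Taylor polynomials of $z\mapsto e^{i\alpha z}$ and passing to the limit, and the converse by reading off $\mathbb{E}[Z^{n}]=0$ for all $n\geq 1$ and deducing \eqref{cancel} for monomials, hence for all polynomials by linearity. The converse is essentially unproblematic, since the hypothesis that $\Phi(\alpha,i\alpha)$ exists and equals $1$ for every complex $\alpha$ already carries the exponential integrability needed to expand or differentiate under the expectation.

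The forward direction is where the analytic gap you flag is genuine and not merely cosmetic: for $\alpha=a+ib$ one has $|e^{i\alpha Z}|=e^{-bX-aY}$, and finiteness of all polynomial moments $\mathbb{E}|Z|^{n}$ gives neither integrability of this quantity nor convergence of $\sum_{n}\mathbb{E}|Z|^{n}\,|\alpha|^{n}/n!$, so dominated convergence cannot be invoked from the stated hypotheses alone and $\Phi(\alpha,i\alpha)$ need not even be defined. This is a looseness in the statement as transcribed (Spitzer's framework supplies the missing regularity, e.g.\ a growth condition on the moments making the characteristic function entire), and your choice to lean on \cite{spitzer-1955a} for that framework mirrors what the paper does. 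If you want the argument self-contained, either add the hypothesis $\mathbb{E}\left[e^{t(|X|+|Y|)}\right]<\infty$ for all $t>0$, under which your limit interchange is justified, or interpret $\Phi(\alpha,i\alpha)=1$ as an identity of formal power series in $\alpha$, equivalently the vanishing of $\mathbb{E}[Z^{n}]$ for all $n\geq 1$, in which case the equivalence with \eqref{cancel} is purely algebraic, exactly as in your converse.
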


This is now reformulated for real and independent random variables.

\begin{theorem}
Let $X, \, Y$ be independent real valued random variables with finite moments.
Define 
\begin{equation*}
\Phi_{X}(\alpha) = \mathbb{E} \left[ e^{i \alpha X} \right] = 
\sum_{n=0}^{\infty} \frac{(i \alpha)^{n}}{n!} 
\mathbb{E} \left[ X^{n} \right] \text{ and }
\Phi_{Y}(\beta) = 
\mathbb{E} \left[ e^{i \alpha Y} \right] = 
\sum_{n=0}^{\infty} \frac{(i \beta)^{n}}{n!} 
\mathbb{E} \left[ Y^{n} \right].
\end{equation*}
\noindent
Then $Z = X + i Y$ is an rrv with mean zero if and only if 
$\Phi_{X}(\alpha) \Phi_{Y}(i \alpha) = 1$.
\end{theorem}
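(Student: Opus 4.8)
The plan is to reduce the claim to the previous theorem, whose hypothesis involves the joint characteristic function $\Phi(\alpha,\beta)=\mathbb{E}[\exp(i\alpha X+i\beta Y)]$ evaluated along the ``diagonal'' $\beta=i\alpha$. First I would observe that, because $X$ and $Y$ are independent, the joint characteristic function factors as
\begin{equation*}
\Phi(\alpha,\beta)=\mathbb{E}\left[e^{i\alpha X}\right]\mathbb{E}\left[e^{i\beta Y}\right]=\Phi_X(\alpha)\,\Phi_Y(\beta).
\end{equation*}
Substituting $\beta=i\alpha$ gives $\Phi(\alpha,i\alpha)=\Phi_X(\alpha)\Phi_Y(i\alpha)$. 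The previous theorem states that $Z=X+iY$ (with $\mathbb{E}[Z]=0$ and finite moments) is an rrv if and only if $\Phi(\alpha,i\alpha)=1$ for all $\alpha\in\mathbb{C}$, so once the factorization is justified the stated equivalence $\Phi_X(\alpha)\Phi_Y(i\alpha)=1$ is immediate. It also remains to note that $\mathbb{E}[Z]=\mathbb{E}[X]+i\mathbb{E}[Y]=0$ is exactly the ``mean zero'' qualifier, and that finiteness of all moments of $Z$ follows from finiteness of all moments of $X$ and of $Y$ via the binomial expansion of $Z^{n}=(X+iY)^{n}$ together with independence.

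The one point needing care is the analytic continuation: the identity $\Phi(\alpha,i\alpha)=\Phi_X(\alpha)\Phi_Y(i\alpha)$ must be read as an identity of power series (equivalently, of entire functions when the moment generating functions have positive radius of convergence), since the argument $i\alpha$ is genuinely complex and ordinary characteristic functions are only defined for real arguments. I would therefore treat $\Phi_X$ and $\Phi_Y$ as the formal series displayed in the statement, multiply them as formal power series in $\alpha$, and match coefficients; the multiplicativity $\mathbb{E}[e^{i\alpha X}e^{i\beta Y}]=\mathbb{E}[e^{i\alpha X}]\mathbb{E}[e^{i\beta Y}]$ holds at the level of these formal expansions precisely because independence gives $\mathbb{E}[X^{j}Y^{k}]=\mathbb{E}[X^{j}]\mathbb{E}[Y^{k}]$ for all $j,k$.

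The main obstacle is thus bookkeeping rather than substance: one must make sure that the manipulations used in the previous theorem's proof (convergence of the relevant series, interchange of $\mathbb{E}$ and summation) transfer to the factored form, and that the phrase ``finite moments'' in the hypothesis is strong enough to make $\Phi_X(\alpha)\Phi_Y(i\alpha)$ a well-defined object along $\beta=i\alpha$. Given the setup of Spitzer's framework as quoted in the excerpt, this is routine, and the proof is essentially the two lines above: factor, substitute $\beta=i\alpha$, and invoke the preceding theorem. I expect the write-up to be short, with the only delicate sentence being the one that explains why the joint characteristic function factors when one of the arguments is imaginary.
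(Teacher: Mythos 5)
The paper offers no explicit proof of this statement---it is presented simply as a reformulation of the preceding theorem from Spitzer's framework---and your reduction (factor $\Phi(\alpha,\beta)=\Phi_X(\alpha)\,\Phi_Y(\beta)$ by independence, substitute $\beta=i\alpha$, invoke the preceding theorem, and check that $\mathbb{E}[Z]=0$ and finiteness of the moments of $Z$ follow from the hypotheses) is exactly the intended argument. Your caveat that the identity $\Phi(\alpha,i\alpha)=\Phi_X(\alpha)\Phi_Y(i\alpha)$ should be read at the level of moment expansions, justified termwise by $\mathbb{E}[X^{j}Y^{k}]=\mathbb{E}[X^{j}]\mathbb{E}[Y^{k}]$, matches the level of rigor the paper itself adopts, so the proposal is correct and essentially the same approach.
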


\begin{example}
Let $X$ and $Y$ be independent Gaussian variables with 
zero mean and the same variance. Then $X$ and $Y$ are conjugate since 
\begin{equation*}
\varphi_{X}(t) = \text{exp} \left( \frac{\sigma^{2}}{2}t^{2} \right) 
\text{ and }
\varphi_{iY}(t) = \text{exp} \left( -  \frac{\sigma^{2}}{2}t^{2} \right).
\end{equation*}
\end{example}

\begin{note}
Suppose $Z = X + i Y$ is a rrv with $\mathbb{E}\left[ Z \right] = 0$ 
and $z \in \mathbb{C}$. The condition \eqref{cancel} becomes
\begin{equation}
\mathbb{E}\left[ h(z + X + i Y) \right] = h(z).
\label{cancel2}
\end{equation}
\end{note}

\medskip

Given a sequence of polynomials $\{ Q_{n}(z) \}$ such that $\deg(Q_{n}) = n$
and with leading coefficient $1$, an elementary argument shows that there 
is a unique sequence of coefficients $\alpha_{j,n}$ 
such that the relation
\begin{equation}
Q_{n+1}(z) - z Q_{n}(z) = \sum_{j=0}^{n} \alpha_{j,n} Q_{j}(z)
\label{recu-Q}
\end{equation}
\noindent 
holds. This section discusses this recurrence for the sequence of polynomials 
\begin{equation}
P_{n}(z) := \mathbb{E}(z + X)^{n}
\end{equation}
\noindent
associated to a random variable $X$. The polynomial $P_{n}$ is of 
degree $n$ and has leading coefficient $1$. It is shown that if 
the cumulants of 
odd order vanish, then the even order cumulants provide the 
coefficients $\alpha_{j,n}$ for the recurrence \eqref{recu-Q}.

\begin{theorem}
\label{thm-recurrence0}
Let $X$ be a random variable with cumulants $\kappa(m)$. Assume the 
odd-order cumulants vanish and that $X$ has a
conjugate random variable $Y$. Define the polynomials 
\begin{equation}
P_{n}(z) = \mathbb{E} \left[(z + X)^{n} \right].
\end{equation}
\noindent
Then $P_{n}$ satisfies the recurrence
\begin{equation}
P_{n+1}(z) - zP_{n}(z) = \sum_{m \geq 1} \binom{n}{2m-1} \kappa(2m) 
P_{n-2m+1}(z).
\label{recu-00}
\end{equation}
\end{theorem}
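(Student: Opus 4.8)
The plan is to exploit the conjugate random variable $Y$ to convert the polynomials $P_n(z)=\mathbb{E}[(z+X)^n]$ into objects whose generating function is transparent. The key observation is that, writing $Z=X+iY$ with $Y$ conjugate to $X$ (so that $\varphi_X(t)\varphi_Y(it)=1$ and the cancellation identity \eqref{cancel2} holds), the moment generating function of $P_n$ factors. Concretely, I would first compute
\[
\sum_{n\ge 0} P_n(z)\frac{t^n}{n!}
= \mathbb{E}\left[ e^{t(z+X)} \right]
= e^{tz}\varphi_X(t)
= e^{tz}e^{\psi(t)},
\]
where $\psi(t)=\log\varphi_X(t)=\sum_{m\ge 1}\kappa(2m)\,t^{2m}/(2m)!$ since the odd cumulants vanish. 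This is the whole analytic content; everything else is bookkeeping with exponential generating functions.

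Next I would translate the desired recurrence \eqref{recu-00} into a statement about this generating function. Multiplying \eqref{recu-00} by $t^n/n!$ and summing over $n\ge 0$, the left-hand side becomes $\partial_t\big(e^{tz}e^{\psi(t)}\big) - z\,e^{tz}e^{\psi(t)} = \psi'(t)\,e^{tz}e^{\psi(t)}$, using $\partial_t(e^{tz}e^{\psi(t)})=(z+\psi'(t))e^{tz}e^{\psi(t)}$. For the right-hand side, the factor $\binom{n}{2m-1}\kappa(2m)$ summed against $P_{n-2m+1}(z)$ is, by the standard product rule for exponential generating functions, exactly the coefficient extraction of the product of $\sum_{m\ge1}\kappa(2m)\,t^{2m-1}/(2m-1)! = \psi'(t)$ with $\sum_{k\ge0}P_k(z)\,t^k/k! = e^{tz}e^{\psi(t)}$. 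Hence both sides equal $\psi'(t)\,e^{tz}e^{\psi(t)}$, and the recurrence follows by comparing coefficients of $t^n/n!$.

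The one place that genuinely needs the hypothesis — and the only potential obstacle — is justifying $\varphi_X(t)=e^{\psi(t)}$ together with the fact that $\psi$ has only even-degree terms; the vanishing of odd cumulants is given, but one should note that $P_n(z)$ is a genuine polynomial of degree $n$ with leading coefficient $1$ (immediate from $\mathbb{E}[(z+X)^n]=z^n+\binom{n}{1}\mathbb{E}[X]z^{n-1}+\cdots$, all lower coefficients being finite moments), so the coefficient comparison is between formal power series in $t$ with polynomial-in-$z$ coefficients and is therefore legitimate term by term. I would also remark why the conjugate variable $Y$ appears in the statement at all: although the computation above only uses the cumulant expansion of $\varphi_X$, the existence of $Y$ (equivalently, \eqref{cancel2}) is what guarantees the series manipulations are anchored to an honest analytic identity rather than a purely formal one, and it is this identity $\mathbb{E}[h(z+X+iY)]=h(z)$ that will be invoked in the next section to pin down the polynomials concretely. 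No step requires more than the product formula for EGFs and the elementary identity $\partial_t(e^{tz+\psi(t)})=(z+\psi'(t))e^{tz+\psi(t)}$, so I expect the proof to be short once the generating-function reformulation is in place.
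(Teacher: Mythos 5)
Your proof is correct, but it follows a genuinely different route from the paper's. You prove the recurrence as a formal exponential-generating-function identity: writing $F(t)=\sum_n P_n(z)t^n/n! = e^{tz}\varphi_X(t)=e^{tz+\psi(t)}$, the left side of \eqref{recu-00} has EGF $\partial_t F - zF=\psi'(t)F(t)$, and the right side is the binomial convolution of $\psi'(t)=\sum_m \kappa(2m)t^{2m-1}/(2m-1)!$ with $F(t)$; this is essentially the classical moment--cumulant convolution identity (the one quoted as \eqref{convolution} from Smith) and uses only finiteness of moments and the vanishing of odd cumulants. The paper argues probabilistically instead: it takes independent copies $X_1,X_2$ of $X$ and the conjugate $Y_1$, evaluates $\mathbb{E}\bigl[X_1\bigl((X_1+iY_1+z+X_2)^n-(z+X_2)^n\bigr)\bigr]$ in two ways, invokes the cancellation property $\mathbb{E}[(X+iY)^r]=0$ for $r\geq 1$, and uses the representation $\kappa(m)=\mathbb{E}\bigl[X(X+iY)^{m-1}\bigr]$ of cumulants via conjugate variables. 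Your approach is more elementary and in fact more general, since the hypothesis that a conjugate $Y$ exists is never used; the paper's approach buys the probabilistic interpretation that is the theme of the article and exercises the conjugacy machinery set up in Section 4. One small correction to your closing remark: the conjugate variable is not needed to ``anchor'' your computation analytically -- if you read $F(t)=e^{tz}M(t)$ coefficientwise (binomial theorem plus linearity of expectation) and define $\psi=\log M$ as a formal power series, the entire argument is a legitimate formal identity with polynomial coefficients, independent of whether the moment generating function converges; in the paper's proof, by contrast, the conjugate variable plays a substantive role rather than a cosmetic one.
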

\begin{proof}
Let $X_{1}, \, X_{2}$ independent copies of $X$. Then 
\begin{multline*}
\mathbb{E} \left[ X_{1} \left( (X_{1}+iY_{1}+z + X_{2})^{n} - (z + X_{2})^{n}
\right) \right]  =  \\
=  \sum_{j=0}^{n} \binom{n}{j} \mathbb{E} \left[ X_{1}(X_{1}+iY_{1})^{j} 
(z + X_{2})^{n-j} \right] - 
\mathbb{E} \left[ X_{1} (z + X_{2})^{n} \right].
\end{multline*}
\noindent
This last expression becomes
\begin{equation*}
\sum_{j=1}^{n} \binom{n}{j} \mathbb{E} \left[ X_{1}(X_{1}+iY_{1})^{j} 
(z + X_{2})^{n-j} \right] 
= \sum_{j=1}^{n} \binom{n}{j} \mathbb{E} \left[ X_{1}(X_{1}+iY_{1})^{j}  \right]
\mathbb{E} \left[(z + X_{2})^{n-j} \right].
\end{equation*}

On the other hand
\begin{multline*}
\mathbb{E} \left[ X_{1} \left( (X_{1}+z + X_{2} + iY_{1})^{n} - (z + X_{2})^{n}
\right) \right]  =   \\
\sum_{r=0}^{n} \binom{n}{r} \mathbb{E} \left[ X_{1}(X_{1}+z)^{n-r} \right] 
\mathbb{E} \left[(X_{2}+iY_{1})^{r} \right] - \mathbb{E} 
\left[ X_{1}(z+X_{2})^{n}  \right].
\end{multline*}
\noindent
The cancellation property \eqref{cancellation0} shows that the only surviving 
term in the sum is $r=0$, therefore
\begin{multline*}
\mathbb{E} \left[ X_{1} \left( (X_{1}+z + X_{2} + iY_{1})^{n} - (z + X_{2})^{n}
\right) \right]  =   \\
\mathbb{E} \left[ X_{1}(X_{1}+z)^{n} \right] - \mathbb{E} \left[ X_{1} 
\right] \mathbb{E} \left[ \left( z + X_{2} \right)^{n} \right]
\end{multline*}
\noindent
and $\mathbb{E} \left[X_{1}\right] = 0$ 
since  $\kappa(1) = 0$. This shows the identity
\begin{equation}
\label{nice-010}
\sum_{j=1}^{n} \binom{n}{j} \mathbb{E} \left[ X_{1}(X_{1}+iY_{1})^{j}  \right]
\mathbb{E} \left[(z + X_{2})^{n-j} \right] = 
\mathbb{E} \left[ X_{1}(X_{1}+z)^{n} \right].
\end{equation}

\smallskip 

The cumulants of $X$ satisfy 
\begin{equation}
\kappa(m) = \mathbb{E} X(X+iY)^{m-1}, \quad  \text{ for } m \geq 1,
\end{equation}
\noindent
(see Theorem 3.3 in \cite{dinardo-2010a}), therefore in the current situation
\begin{equation}
\mathbb{E} \left[ X_{1}(X_{1}+iY_{1})^{j}  \right] = 
\begin{cases} 
0 & \quad \text{ if } j \text{ is even} \\
\kappa(2m) & \quad \text{ if } j = 2m + 1 \text{ is odd}.
\end{cases}
\end{equation}
\noindent
On the other hand 
\begin{eqnarray*}
\mathbb{E} \left[ X_{1}(X_{1}+z)^{n} \right]  & = & 
\mathbb{E} \left[ (X_{1}+z)^{n+1} - z (X_{1}+z)^{n} \right]  \\
& = & P_{n+1}(z) - z P_{n}(z).
\end{eqnarray*}
\noindent
Replacing in \eqref{nice-010} yields the result.
\end{proof}

\medskip

Recall that a random variable has a Laplace distribution if its
distribution function is 
\begin{equation}
f_{L}(x) = \tfrac{1}{2}e^{-|x|}.
\end{equation}

\smallskip

Assume $X_{\mu}$ has a distribution $f_{\mu}$ defined in 
\eqref{semicircular0} and moment generating function given by \eqref{factor}.
The next lemma constructs a random variable $Y_{\mu}$ conjugate to $X_{\mu}$.

\begin{lemma}
Let $Y_{\mu,n}$ be a random variable defined by 
\begin{equation}
Y_{\mu,n} = \sum_{k=1}^{n} \frac{L_{k}}{j_{\mu,k}}
\end{equation}
\noindent
where $\{ L_{k}: \, k \in \mathbb{N} \}$ is a sequence of independent, 
identically distributed  Laplace 
random variables. Then 
$\lim\limits_{n \to \infty} Y_{\mu,n} = Y_{\mu}$ exists and is a 
random variable with continuous probability density. Moreover, the moment 
generating function of $iY_{\mu}$ is 
\begin{equation}
\mathbb{E} \left[ e^{it Y_{\mu}} \right]
= \prod_{k=1}^{\infty} \left( 1 + \frac{t^{2}}
{j_{\mu,k}^{2}} \right)^{-1}.
\label{recigf0}
\end{equation}
\noindent
the reciprocal of the moment generating function of $f_{\mu}$ given in 
\eqref{factor}. 
\end{lemma}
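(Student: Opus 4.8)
The plan is to build $Y_\mu$ as an almost-sure limit of the partial sums $Y_{\mu,n}$ and then identify its characteristic function through independence. First I would record the characteristic function of a single Laplace variable: if $L$ has density $\tfrac12 e^{-|x|}$, then $\mathbb{E}[e^{itL}] = (1+t^2)^{-1}$, hence $\mathbb{E}[e^{it L_k/j_{\mu,k}}] = (1 + t^2/j_{\mu,k}^2)^{-1}$. By independence of the $L_k$,
\begin{equation}
\mathbb{E}\left[ e^{it Y_{\mu,n}} \right] = \prod_{k=1}^{n} \left( 1 + \frac{t^{2}}{j_{\mu,k}^{2}} \right)^{-1}.
\label{partial-char}
\end{equation}
Since the Bessel zeros satisfy $j_{\mu,k} \sim \pi k$ as $k \to \infty$, the series $\sum_k j_{\mu,k}^{-2}$ converges, so the infinite product in \eqref{partial-char} converges to a nonzero limit for each fixed $t$, and in fact locally uniformly in $t$; this limit is exactly the reciprocal of the Weierstrass product \eqref{factor} for $\varphi_\mu$.

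Next I would establish convergence of the random variables themselves. Because $\mathbb{E}|L_k| = 1$ and $\sum_k j_{\mu,k}^{-1}$ \emph{diverges} (again from $j_{\mu,k}\sim\pi k$), one cannot use absolute convergence directly; instead I would invoke Kolmogorov's one-series / three-series theorem. The summands $L_k/j_{\mu,k}$ are independent with mean zero and $\sum_k \operatorname{Var}(L_k/j_{\mu,k}) = 2\sum_k j_{\mu,k}^{-2} < \infty$, so the series $\sum_k L_k/j_{\mu,k}$ converges almost surely; call the sum $Y_\mu$. Almost-sure convergence implies convergence in distribution, and by \eqref{partial-char} together with the continuity theorem for characteristic functions the limiting characteristic function is the infinite product, which gives \eqref{recigf0}. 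That the moment generating function (not merely the characteristic function) exists in a neighborhood of the origin follows because the product $\prod(1+t^2/j_{\mu,k}^2)^{-1}$ is analytic for $|t| < j_{\mu,1}$, matching the radius of convergence already noted for $\psi_\mu$; so $iY_\mu$ has mgf equal to $1/\varphi_\mu$ there, which is what the statement asserts.

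It remains to argue that $Y_\mu$ has a continuous density. Here I would use the characteristic function $\widehat{Y_\mu}(t) = \prod_k (1+t^2/j_{\mu,k}^2)^{-1/1}$... more precisely, by definition $Y_\mu$ is real so its characteristic function is $\prod_k(1+t^2/j_{\mu,k}^2)^{-1}$, evaluated along the real axis. Since $j_{\mu,k}\sim \pi k$, for large $k$ the factor is $(1 + t^2/(\pi k)^2 + o(k^{-2}))^{-1}$, and the tail product behaves like $\prod_k (1+t^2/(\pi k)^2)^{-1} = \dfrac{\pi t}{\sinh(\pi t)}$ up to a convergent correction; in particular the characteristic function decays like $e^{-c|t|}$ as $|t|\to\infty$ for a positive constant $c$, hence is integrable on $\mathbb{R}$. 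Fourier inversion then produces a bounded continuous density for $Y_\mu$.

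The main obstacle is the convergence of the series $\sum_k L_k/j_{\mu,k}$: since $\sum_k j_{\mu,k}^{-1}$ diverges, naive absolute-convergence arguments fail, and one genuinely needs the Kolmogorov convergence criterion (mean-zero, summable variances) to get almost-sure convergence. Everything else — the single-Laplace characteristic function, the product formula, recognizing the reciprocal of \eqref{factor}, and the density via Fourier inversion using the known asymptotics $j_{\mu,k}\sim\pi k$ — is routine once that point is handled. I would also want to double-check that the resulting $Z_\mu = X_\mu + iY_\mu$ is then an rrv, i.e.\ that $\varphi_{X_\mu}(t)\,\varphi_{iY_\mu}(t) = 1$, but that is immediate from $\varphi_{X_\mu} = \varphi_\mu$ and \eqref{recigf0}.
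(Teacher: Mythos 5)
Your proposal is correct, and its core — a.s.\ convergence of $\sum_k L_k/j_{\mu,k}$ from mean zero plus summable variances, then the product of the individual Laplace characteristic functions $\bigl(1+t^2/j_{\mu,k}^2\bigr)^{-1}$ to identify $\mathbb{E}\bigl[e^{itY_\mu}\bigr]$ as the reciprocal of \eqref{factor} — is exactly the paper's route; the paper phrases the convergence criterion as convergence of $\sum_k \mathbb{E}[L_k/j_{\mu,k}]$ and $\sum_k \mathbb{E}[L_k^2/j_{\mu,k}^2]$ and cites Jessen--Wintner rather than naming Kolmogorov's two/three-series theorem, but that is the same hypothesis. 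Where you genuinely diverge is the continuity of the density: the paper gets it for free from a convolution argument, namely that at least one summand (in fact each $L_k/j_{\mu,k}$) has a continuous density of bounded variation, so the infinite convolution inherits a continuous density; you instead argue via Fourier inversion from decay of the characteristic function. Your route works, and your observation that $j_{\mu,k}\sim\pi k$ gives exponential decay is more than is needed (integrability already follows from bounding the product by its first two factors, which decay like $t^{-4}$); however, the specific claim that the tail product equals $\pi t/\sinh(\pi t)$ ``up to a convergent correction'' is loose, since the factorwise ratios $j_{\mu,k}^2/(\pi k)^2$ do not form a convergent product in general — a cleaner statement is the counting bound $\sum_k \log\bigl(1+t^2/j_{\mu,k}^2\bigr)\geq c\,|t|$ for large $|t|$, coming from the roughly $|t|/\pi$ zeros below $|t|$. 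The trade-off: the paper's convolution argument is shorter and avoids any asymptotics of Bessel zeros, while your inversion argument gives more (boundedness and, with the exponential decay, smoothness of the density) at the cost of the extra analytic estimate.
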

\begin{proof}
The characteristic function of a Laplace random variable $iL_{k}/j_{\mu,k}$ 
is 
\begin{equation}
\varphi_{iL_{k}}(t) = \frac{1}{1+ \frac{t^{2}}{j_{\mu,k}^{2}}}.
\end{equation}
\noindent
The values
\begin{equation}
\mathbb{E} \left[ \frac{L_{k}}{j_{\mu,k}} \right] = 0, \text{ and }
\mathbb{E} \left[ \frac{L_{k}^{2}}{j_{\mu,k}^{2}} \right] = 
\frac{2}{j_{\mu,k}^{2}}, 
\end{equation}
\noindent
guarantee the convergence of the series
\begin{equation}
\sum_{k=1}^{\infty} \mathbb{E} \left[ \frac{L_{k}}{j_{\mu,k}} \right] 
\text{ and }
\sum_{k=1}^{\infty} \mathbb{E} \left[ \frac{L_{k}^{2}}{j_{\mu,k}^{2}} 
\right].
\end{equation}
\noindent
(The last series evaluates to $1/(2 \mu + 2)$). This ensures the existence 
of the limit defining $Y_{\mu}$ (see \cite{jessen-1935a} for details). The 
continuity of the limiting probability density $Y_{\mu}$ is ensured by the 
fact that at least one term (in fact all) in the defining sum has a 
continuous probability density that is of bounded variation.
\end{proof}

\begin{note}
In the case $X_{\mu} \sim f_{\mu}$ is independent of $Y_{\mu}$, then the 
conjugacy property states that if $h$ is an analytic 
function in a neighborhood $\mathcal{O}$ of the origin, then 
\begin{equation}
\mathbb{E} \left[h(z + X_{\mu} + i Y_{\mu}) \right] 
= h(z), \quad \text{ for } z \in 
\mathcal{O}.
\end{equation}
\noindent
In particular
\begin{equation}
\label{cancellation0}
\mathbb{E}\left[(X_{\mu}+i Y_{\mu})^{n} \right] = 
\begin{cases}
1 & \quad \text{ if } n = 0, \\
0 & \quad \text{ otherwise}.
\end{cases}
\end{equation}
\end{note}

\begin{note}
In the special case $\mu = n/2-1$ for $n \in \mathbb{N}, \, n \geq 3$, the 
function \eqref{recigf0} has been characterized in \cite{ciesielksi-1962a} 
as the moment generating function of the total time $T_{n}$ spent in the 
sphere $S^{n-1}$ by an $n$-dimensional Brownian motion starting at the 
origin.
\end{note}

\section{The Narayana polynomials and the sequence $A_{n}$} 
\label{S:narayana} 

The result of Theorem \ref{thm-recurrence0} is now applied to a random 
variable $X \sim f_{1}$. In this case the polynomials $P_{n}$ 
correspondi, up to a change of variable, to 
the Narayama polynomials $\mathcal{N}_{n}$.  The 
recurrence established by M. Lasalle 
comes from the results in Section \ref{S:prob}. In particular, this 
provides an interpretation of the sequence $\{ A_{n} \}$ in terms of 
cumulants and the Bessel zeta function. 

Recall the distribution function $f_{1}$
\begin{equation}
\label{form-f1}
f_{1}(x) = 
\begin{cases}
2\sqrt{1-x^{2}}/\pi, & \quad \text{ for } |x| \leq 1  \\
0, & \quad \text{ otherwise}.
\end{cases}
\end{equation}

\begin{lemma}
\label{nara-exp0}
Let $X \sim f_{1}$. The Narayana polynomials appear as the moments
\begin{equation}
\mathcal{N}_{r}(z) = 
\mathbb{E} \left[ \left( 1 + z + 2 \sqrt{z} X \right)^{r-1} \right],
\end{equation}
\noindent
for $r \geq 1$. 
\end{lemma}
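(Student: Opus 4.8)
The plan is to verify the claimed identity
\[
\mathcal{N}_{r}(z) = \mathbb{E}\left[(1+z+2\sqrt{z}\,X)^{r-1}\right], \qquad X \sim f_{1},
\]
by comparing it against the known closed form \eqref{nara-cata0} for the Narayana polynomials, namely $\mathcal{N}_{r}(z) = \sum_{m\geq 0} z^{m}(z+1)^{r-2m-1}\binom{r-1}{2m}C_{m}$. First I would expand the right-hand side by the binomial theorem:
\[
\mathbb{E}\left[(1+z+2\sqrt{z}\,X)^{r-1}\right] = \sum_{j=0}^{r-1}\binom{r-1}{j}(1+z)^{r-1-j}(2\sqrt{z})^{j}\,\mathbb{E}\left[X^{j}\right].
\]
Then I would invoke the moment computation recorded just before this lemma — the even moments of $f_{1}$ satisfy $\mathbb{E}[(2X)^{2m}] = C_{m}$ (equivalently $\mathbb{E}[X^{2m}] = C_{m}/2^{2m}$) and the odd moments vanish. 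Substituting $j = 2m$ kills every odd term, and $(2\sqrt{z})^{2m}\mathbb{E}[X^{2m}] = z^{m}\cdot 2^{2m}\cdot C_{m}/2^{2m} = z^{m}C_{m}$, so the sum collapses to exactly $\sum_{m\geq 0}\binom{r-1}{2m}(1+z)^{r-1-2m}z^{m}C_{m}$, which is \eqref{nara-cata0}.

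The only genuine content beyond bookkeeping is the moment formula for $f_{1}$, and that is already available in the excerpt: the general symmetric beta moment $\mathbb{E}[X^{2m}] = \frac{\Gamma(\mu+1)}{\Gamma(\mu+1+m)}\frac{(2m)!}{2^{2m}m!}$ specializes at $\mu = 1$ to $\frac{1}{(m+1)!}\frac{(2m)!}{2^{2m}m!} = \frac{C_{m}}{2^{2m}}$, matching \eqref{exp-catalan}. Alternatively, one can cite \eqref{exp-catalan} directly. So the proof is essentially a two-line substitution once \eqref{nara-cata0} is quoted.

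I do not anticipate a real obstacle here; the statement is a repackaging of Coker's formula \eqref{nara-cata0} in probabilistic language. The one point requiring a word of care is the interpretation of $\sqrt{z}$: the identity is a polynomial identity in $z$ once one notes that only even powers of $\sqrt{z}$ survive the expectation (odd moments vanish), so no branch-of-square-root ambiguity arises and the final expression is a genuine polynomial in $z$ of degree $r-1$. I would mention this explicitly to reassure the reader, then present the two-line binomial expansion and the substitution as the full proof.
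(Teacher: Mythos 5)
Your proposal is correct and is essentially identical to the paper's own proof: the authors also expand $\mathbb{E}\left[(1+z+2\sqrt{z}\,X)^{r-1}\right]$ by the binomial theorem and then invoke \eqref{exp-catalan} together with Coker's formula \eqref{nara-cata0}. Your added remark that only even powers of $\sqrt{z}$ survive, so the result is a genuine polynomial in $z$, is a harmless clarification not present in the paper.
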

\begin{proof}
The binomial theorem gives 
\begin{equation*}
\mathbb{E} \left[(1 + z + 2 \sqrt{z} X)^{r-1} \right]  =  
\sum_{j=0}^{r-1} \binom{r-1}{j} (z+1)^{r-1-j} z^{j/2} 
\mathbb{E} \left[(2X)^{j} \right].
\end{equation*}
\noindent
The result now follows from \eqref{exp-catalan} and \eqref{nara-cata0}.
\end{proof}

In order to apply Theorem \ref{thm-recurrence0} consider the identities
\begin{eqnarray}
\mathcal{N}_{r}(z) & = & 
\mathbb{E}\left[ \left( 1 + z + 2 \sqrt{z} X \right)^{r-1} \right] \\
& = & ( 2 \sqrt{z} )^{r-1} 
\mathbb{E}\left[ \left(X + z_{*} \right)^{r-1} \right] \nonumber \\
& = & (2 \sqrt{z})^{r-1} P_{r-1}(z_{*}),
\nonumber
\end{eqnarray}
\noindent 
with 
\begin{equation}
z_{*} = \frac{1+z}{ 2 \sqrt{z}}. 
\end{equation}

The recurrence \eqref{recu-00} applied to the polynomial $P_{n}(z_{*})$ 
yields
\begin{equation}
\frac{\mathcal{N}_{n+2}(z)}{(2 \sqrt{z})^{n+1}} - 
\frac{(1+z)}{2 \sqrt{z}} \frac{\mathcal{N}_{n+1}(z)}{( 2 \sqrt{z})^{n}} = 
\sum_{m \geq 1} \binom{n}{2m-1} \kappa(2m) 
\frac{\mathcal{N}_{n-2m+2}(z)}{( 2 \sqrt{z})^{n-2m+1}}
\end{equation}
\noindent
that reduces to 
\begin{equation}
\label{rec-nara1}
(1+z)\mathcal{N}_{r}(z) - \mathcal{N}_{r+1}(z) = 
- \sum_{m \geq 1} \binom{r-1}{2m-1} \kappa(2m) 2^{2m} z^{m} 
\mathcal{N}_{r+1-2m}(z),
\end{equation}
\noindent
by using $r = n+1$. This recurrence has the form of \eqref{recu-lasalle}. 

\begin{theorem}
\label{thm-mu1}
Let $X \sim f_{1}$. Then the coefficients $A_{n}$ in Definition \ref{def0-A}
are given by 
\begin{equation}
A_{n} = (-1)^{n+1} \kappa(2n) 2^{2n}.
\end{equation}
\end{theorem}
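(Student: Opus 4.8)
The plan is to compare the recurrence \eqref{rec-nara1} just derived with Lasalle's recurrence \eqref{recu-lasalle} (equivalently \eqref{def0-A}) and conclude that the coefficients match. First I would observe that Lemma \ref{nara-exp0} identifies $\mathcal{N}_r(z)$ with $\mathbb{E}[(1+z+2\sqrt{z}X)^{r-1}]$ for $X\sim f_1$, and that the substitution $z_* = (1+z)/(2\sqrt{z})$ turns this into $(2\sqrt{z})^{r-1}P_{r-1}(z_*)$ where $P_n(z)=\mathbb{E}[(z+X)^n]$ is exactly the polynomial family of Theorem \ref{thm-recurrence0}. For that theorem to apply I must check its hypotheses for $X\sim f_1$: the odd-order cumulants vanish (immediate, since $f_1$ is symmetric, so all odd moments vanish and hence all odd cumulants vanish), and $X$ possesses a conjugate random variable $Y$ — this is supplied by the Lemma constructing $Y_\mu$ from a series of Laplace variables, specialized to $\mu=1$.

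Granting those hypotheses, Theorem \ref{thm-recurrence0} gives the recurrence \eqref{recu-00} for $P_n$, and substituting $z\mapsto z_*$ and clearing the powers of $2\sqrt{z}$ (a routine bookkeeping step, already carried out in the text to arrive at \eqref{rec-nara1}) yields
\begin{equation*}
(1+z)\mathcal{N}_{r}(z) - \mathcal{N}_{r+1}(z) =
- \sum_{m \geq 1} \binom{r-1}{2m-1} \kappa(2m)\, 2^{2m} z^{m}
\mathcal{N}_{r+1-2m}(z).
\end{equation*}
On the other hand, Lasalle's identity \eqref{def0-A} reads
\begin{equation*}
(z+1)\mathcal{N}_{r}(z) - \mathcal{N}_{r+1}(z)
= \sum_{n \geq 1} (-z)^{n} \binom{r-1}{2n-1}
A_{n}\, \mathcal{N}_{r-2n+1}(z).
\end{equation*}
Matching the coefficient of $z^m \binom{r-1}{2m-1}\mathcal{N}_{r+1-2m}(z)$ in these two expansions gives $-\kappa(2m)2^{2m} = (-1)^m A_m$, i.e. $A_m = (-1)^{m+1}\kappa(2m)2^{2m}$, which is the claim.

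The one genuine subtlety — the main obstacle — is justifying that matching coefficients is legitimate: one needs that the polynomials $\{\mathcal{N}_{r-2m+1}(z)\}_{m\ge 1}$ appearing on the right, together with the powers of $z$, form a basis in which the expansion is unique, so that the two representations of the same left-hand side must have identical coefficients. This follows because for fixed $r$ the term indexed by $m$ contributes a polynomial of degree $r-1$ in $z$ with a distinguished lowest-degree behavior governed by $z^m$, or more simply because both \eqref{rec-nara1} and \eqref{def0-A} are instances of the \emph{unique} expansion \eqref{recu-Q} of $P_{r-1}(z_*)$: by the elementary uniqueness statement preceding Theorem \ref{thm-recurrence0}, the coefficients $\alpha_{j,n}$ in $Q_{n+1}-zQ_n = \sum_j \alpha_{j,n}Q_j$ are determined, so Lasalle's $A_n$ and the cumulant expression must coincide. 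I would phrase the final argument in exactly that way, invoking the uniqueness of \eqref{recu-Q}, since it sidesteps any ad hoc degree counting.
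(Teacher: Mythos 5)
Your proposal is correct and follows essentially the same route as the paper: apply Theorem \ref{thm-recurrence0} to $P_{r-1}(z_{*})$ with $X \sim f_{1}$ to get \eqref{rec-nara1}, then identify the coefficients with those in Lasalle's recurrence \eqref{recu-lasalle}. Your added checks (vanishing odd cumulants, existence of the conjugate variable $Y_{1}$, and the uniqueness of the expansion, e.g.\ via \eqref{recu-Q} in the $z_{*}$ variable or by noting the terms $z^{m}\mathcal{N}_{r+1-2m}(z)$ have distinct degrees) only make explicit what the paper leaves implicit.
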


The expression in \eqref{cumu-zeta} gives the next result.

\begin{corollary}
Let 
\begin{equation}
\zeta_{\mu}(s) = \sum_{k=1}^{\infty} \frac{1}{j_{\mu,k}^{s}}
\end{equation}
\noindent
be the Bessel zeta function. Then the 
coefficients $A_{n}$ are given by
\begin{equation*}
A_{n}  =  2^{2n+1} (2n-1)! \, \zeta_{1}(2n).
\label{An-zeta1}
\end{equation*}
\end{corollary}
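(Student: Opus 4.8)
The plan is to read the corollary off directly from Theorem~\ref{thm-mu1} by inserting the closed form for the cumulants furnished by \eqref{cumu-zeta}. Theorem~\ref{thm-mu1} asserts that, for $X \sim f_1$, the Lasalle coefficient is $A_n = (-1)^{n+1}\kappa(2n)\,2^{2n}$, where $\kappa$ is the cumulant sequence of $X$. Since the argument $2n$ is even, the nontrivial branch of \eqref{cumu-zeta}, taken with $\mu = 1$ and with the index $n$ there replaced by $2n$, applies and yields
\begin{equation*}
\kappa(2n) = 2\,(-1)^{n+1}\,(2n-1)!\,\zeta_1(2n).
\end{equation*}

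First I would substitute this into the identity of Theorem~\ref{thm-mu1}, obtaining
\begin{equation*}
A_n = (-1)^{n+1}\cdot 2\,(-1)^{n+1}\,(2n-1)!\,\zeta_1(2n)\cdot 2^{2n}.
\end{equation*}
The remaining step is purely sign and exponent bookkeeping: the two sign factors multiply to $(-1)^{2n+2}=1$, and collecting the powers of two gives $2\cdot 2^{2n}=2^{2n+1}$, which is exactly the claimed formula $A_n = 2^{2n+1}(2n-1)!\,\zeta_1(2n)$.

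There is no real obstacle here, since the statement is a direct specialization of results already in hand; the only point to confirm is that $\zeta_1(2n)=\sum_{k\ge 1} j_{1,k}^{-2n}$ is a convergent series, which holds because the positive zeros $j_{1,k}$ of $J_1$ grow linearly in $k$, so the sum converges for every $n\ge 1$. As a consistency check one may observe that the computation following \eqref{recigf0} gives $2\zeta_\mu(2)=1/(2\mu+2)$, hence $\zeta_1(2)=\tfrac18$, and the formula then returns $A_1 = 2^{3}\cdot 1!\cdot \tfrac18 = 1$, in agreement with the listed value $A_1 = 1$.
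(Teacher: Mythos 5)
Your proposal is correct and is essentially the paper's own argument: the corollary is obtained there, too, by substituting the even-order cumulant formula \eqref{cumu-zeta} (with $\mu=1$ and $n$ replaced by $2n$, so $\kappa(2n)=2(-1)^{n+1}(2n-1)!\,\zeta_{1}(2n)$) into the identity $A_{n}=(-1)^{n+1}\kappa(2n)2^{2n}$ of Theorem \ref{thm-mu1}. Your sign and power-of-two bookkeeping, as well as the consistency check $A_{1}=1$ via $\zeta_{1}(2)=\tfrac18$, are accurate.
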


The scaled coefficients $a_{n}$ are now expressed in terms of the Bessel 
zeta function.

\begin{corollary}
The coefficients $a_{n}$ are given by 
\begin{equation}
a_{n} = 2^{2n+1} (n+1)! (n-1)! \, \zeta_{1}(2n).
\label{an-zeta}
\end{equation}
\end{corollary}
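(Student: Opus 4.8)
The plan is to obtain the formula for $a_n$ by combining the definition $a_n = 2A_n/C_n$ with the expression for $A_n$ already derived in the preceding corollary, namely $A_n = 2^{2n+1}(2n-1)!\,\zeta_1(2n)$. First I would write
\begin{equation*}
a_n = \frac{2 A_n}{C_n} = \frac{2 \cdot 2^{2n+1}(2n-1)!\,\zeta_1(2n)}{C_n},
\end{equation*}
and then substitute the closed form of the Catalan number $C_n = \frac{1}{n+1}\binom{2n}{n} = \frac{(2n)!}{(n+1)!\,n!}$.

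The remaining work is purely algebraic simplification of the factorials. Dividing by $C_n$ means multiplying by $\frac{(n+1)!\,n!}{(2n)!}$, so
\begin{equation*}
a_n = 2^{2n+2}(2n-1)!\,\zeta_1(2n) \cdot \frac{(n+1)!\,n!}{(2n)!}
    = 2^{2n+2}\,\zeta_1(2n)\, \frac{(n+1)!\,n!}{2n}
    = 2^{2n+1}(n+1)!\,(n-1)!\,\zeta_1(2n),
\end{equation*}
using $(2n-1)!/(2n)! = 1/(2n)$ and $n!/(2n) = (n-1)!/2$. This is exactly the claimed identity \eqref{an-zeta}.

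There is essentially no obstacle here: the statement is a one-line corollary of the previous one, and the only thing to be careful about is bookkeeping with the factorial and power-of-two factors. For completeness I would note that the definition $a_n = 2A_n/C_n$ is the one introduced in the Introduction (following Zeilberger's suggestion), and that the previous corollary is itself an immediate consequence of Theorem \ref{thm-mu1} together with the even-cumulant formula \eqref{cumu-zeta} specialized to $\mu = 1$. One could alternatively derive the formula directly from \eqref{an-def11}, writing $a_n = 2(-1)^{n+1}\kappa_1(2n)/\mathbb{E}[X_*^{2n}]$ with $\mathbb{E}[X_*^{2n}] = C_n$ and $(-1)^{n+1}\kappa_1(2n) = 2(2n-1)!\,\zeta_1(2n)$, but the route through the $A_n$ corollary is the shortest and keeps the exposition self-contained.
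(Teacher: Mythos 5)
Your proposal is correct and follows exactly the route the paper intends: the corollary is an immediate consequence of $a_n = 2A_n/C_n$, the preceding formula $A_n = 2^{2n+1}(2n-1)!\,\zeta_1(2n)$, and the factorial form of the Catalan number, and your simplification $2^{2n+2}(2n-1)!\,(n+1)!\,n!/(2n)! = 2^{2n+1}(n+1)!\,(n-1)!$ is accurate. Nothing further is needed.
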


\begin{note}
This expression for the coefficients and the recurrence 
\begin{equation}
(n+ \mu) \zeta_{\mu}(2n) = \sum_{r=1}^{n-1} \zeta_{\mu}(2r) 
\zeta_{\mu}(2n-2r).
\label{recu-zeta1}
\end{equation}
given in \cite{elizalde-1993a}, provides a new proof of the 
recurrence in Proposition \eqref{recu-fora}.
\end{note}

\section{The generalized Narayana polynomials} 
\label{S:generalized} 

The Narayama polynomials $\mathcal{N}_{r}(z)$, defined in 
\eqref{nara-poly-def}, have been expressed as the moments
\begin{equation}
\mathcal{N}_{r}(z) 
= \mathbb{E} \left[ \left( 1 + z + 2 \sqrt{z} X \right)^{r-1} \right],
\label{nara-exp1}
\end{equation}
\noindent
for $r \geq 1$. Here $X$ is a random variable with distribution function 
$f_{1}$. This suggests the extension
\begin{equation}
\mathcal{N}_{n}^{\mu}(z) = 
\mathbb{E} \left[ \left( 1 + z + 2 \sqrt{z} X \right)^{n-1} \right],
\label{nara-exp2}
\end{equation}
\noindent
with $X \sim f_{\mu}$. Therefore, $\mathcal{N}_{n} = \mathcal{N}_{n}^{1}$. 

\begin{note}
The same argument given in \eqref{rec-nara1} gives the recurrence
\begin{equation}
\label{rec-nara2}
(1+z)\mathcal{N}_{r}^{\mu}(z) - \mathcal{N}_{r+1}^{\mu}(z) = 
- \sum_{m \geq 1} \binom{r-1}{2m-1} \kappa(2m) 2^{2m} z^{m} 
\mathcal{N}_{r+1-2m}^{\mu}(z),
\end{equation}
\noindent
where $\kappa(2n)$ are the cumulants of $X \sim f_{\mu}$. Theorem \ref{thm-mu1}
gives an expression for the generalization of the Lasalle numbers:
\begin{equation}
A_{n}^{\mu} := (-1)^{n+1} \kappa(2n) 2^{2n}
\end{equation}
\noindent
and the corresponding expression in terms of the Bessel zeta function:
\begin{equation}
A_{n}^{\mu} := 2^{2n+1} (2n-1)! \zeta_{\mu}(2n).
\end{equation}
\end{note}

The generalized Narayana polynomials are now expressed in terms of the Gegenbauer polynomials
$C_{n}^{\mu}(x)$ defined by the generating function 
\begin{equation}
\sum_{n=0}^{\infty} C_{n}^{\mu}(x)t^{n} = (1 - 2xt + t^{2})^{-\mu}.
\end{equation}
\noindent
These polynomial admit several hypergeometric representations:
\begin{eqnarray}
\quad C_{n}^{\mu}(x) & = & 
\frac{(2 \mu)_{n}}{n!} {_{2}F_{1}} \left( -n, n + 2 \mu; \mu + \tfrac{1}{2}; 
\frac{1-x}{2} \right) \label{gegen-hyper} \\
& = & \frac{2^{n}(\mu)_{n}}{n!} (x-1)^{n} \, 
{_{2}F_{1}} \left( -n, -n - \mu + \tfrac{1}{2}; -2n -2 \mu + 1; 
\frac{2}{1-x} \right) \nonumber  \\
& = & \frac{(2 \mu)_{n}}{n!} \left( \frac{x+1}{2} \right)^{n} \, 
{_{2}F_{1}} \left( -n, -n - \mu + \tfrac{1}{2}; \mu + \tfrac{1}{2}; 
\frac{x-1}{x+1} \right). \nonumber
\end{eqnarray}

The connection between Narayana and Gegenbauer polynomials comes from the 
expression for $C_{n}^{\mu}(z)$ given in the next proposition.

\begin{prop}
The Gegenbauer polynomials are given by 
\begin{equation}
C_{n}^{\mu}(z) = \frac{(2 \mu)_{n}}{n!} 
\mathbb{E} \left[ \left( z + \sqrt{z^{2}-1} X_{\mu - 1/2} \right)^{n} \right].
\end{equation}
\end{prop}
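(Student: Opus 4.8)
The plan is to evaluate the expectation $\mathbb{E}\big[(z+\sqrt{z^2-1}\,X_{\mu-1/2})^n\big]$ directly by expanding with the binomial theorem and recognizing the resulting hypergeometric series as one of the three representations of $C_n^\mu(z)$ listed in \eqref{gegen-hyper}. First I would recall from Section \ref{S:prob} that for $X\sim f_{\mu-1/2}$ the odd moments vanish and the even moments are
\begin{equation*}
\mathbb{E}\big[X_{\mu-1/2}^{2k}\big] = \frac{\Gamma(\mu+\tfrac12)}{\Gamma(\mu+\tfrac12+k)}\,\frac{(2k)!}{2^{2k}\,k!},
\end{equation*}
obtained by substituting $\mu \mapsto \mu-\tfrac12$ in the moment formula for the symmetric beta distribution. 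Expanding the binomial and keeping only even powers of $X$ gives
\begin{equation*}
\mathbb{E}\big[(z+\sqrt{z^2-1}\,X_{\mu-1/2})^n\big] = \sum_{k\ge 0}\binom{n}{2k}z^{n-2k}(z^2-1)^{k}\,\mathbb{E}\big[X_{\mu-1/2}^{2k}\big].
\end{equation*}

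The next step is to massage the coefficients into Pochhammer form. Using $\binom{n}{2k}\frac{(2k)!}{2^{2k}k!} = \frac{n!}{(n-2k)!\,2^{2k}\,k!}$ together with $\dfrac{(n-2k)!}{n!} = \dfrac{(-1)^{2k}}{n!/(n-2k)!}$ — more usefully, $\dfrac{1}{(n-2k)!} = \dfrac{(-1)^{2k}(-n)_{2k}}{n!}$ and the duplication-type identity $(-n)_{2k} = 4^{k}(-n/2)_k((1-n)/2)_k$ — and $\dfrac{\Gamma(\mu+1/2)}{\Gamma(\mu+1/2+k)} = \dfrac{1}{(\mu+1/2)_k}$, one rewrites the sum as a ${}_2F_1$ in the variable $1-1/z^2$ or, after pulling out a power of $(z-1)$ or $(z+1)$, in the variable $\tfrac{2}{1-z}$ or $\tfrac{z-1}{z+1}$. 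Matching against the second or third line of \eqref{gegen-hyper} — the forms with argument $\tfrac{2}{1-x}$ and $\tfrac{z-1}{z+1}$, which carry the prefactors $(x-1)^n$ and $\big(\tfrac{x+1}{2}\big)^n$ respectively — identifies the series, since writing $(z^2-1)^k = (z-1)^k(z+1)^k$ naturally produces exactly such a prefactor. One then checks that the upper/lower parameters and the prefactor $\frac{(2\mu)_n}{n!}$ agree; the Gegenbauer normalization $C_n^\mu(1) = \binom{n+2\mu-1}{n} = \frac{(2\mu)_n}{n!}$ provides a quick sanity check at $z=1$, where the expectation collapses to $1$ times $z^n$.

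The main obstacle is bookkeeping: there are several hypergeometric representations and one must pick the one whose argument is a rational function of $z$ that the binomial expansion actually produces, then reconcile the Pochhammer algebra (the quadratic transformation relating $(-n)_{2k}$ to products of half-integer-shifted Pochhammer symbols) without sign or factor errors. An alternative, perhaps cleaner, route that I would fall back on if the direct matching gets messy is to verify the claimed identity by showing both sides satisfy the same generating function: multiply the asserted formula by $t^n$, sum over $n$, interchange sum and expectation (justified for $|t|$ small by the finiteness of all moments), and evaluate $\mathbb{E}\big[(1-2t(z+\sqrt{z^2-1}\,X_{\mu-1/2}) + t^2)^{-\mu}\big]$ — but this requires knowing $\mathbb{E}[(1-2ut+t^2)^{-\mu}]$ for $u = z+\sqrt{z^2-1}X$, which itself reduces to a beta integral of Gegenbauer type, so in practice the binomial-expansion approach is the most self-contained. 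Either way, the only inputs needed are the moment formula from Section \ref{S:prob} and the representations \eqref{gegen-hyper}, both already available.
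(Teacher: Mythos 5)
Your proposal is correct in outline but takes a genuinely different route from the paper. The paper's proof is essentially one step: it quotes the classical Laplace integral representation
\begin{equation*}
C_{n}^{\mu}(\cos \theta) = \frac{\Gamma(n + 2 \mu)}{2^{2 \mu - 1}\, n! \, \Gamma^{2}(\mu)}
\int_{0}^{\pi}\left( \cos \theta + i \sin \theta \cos \phi \right)^{n} \sin^{2 \mu - 1} \phi \, d \phi
\end{equation*}
(Theorem 6.7.4 in Andrews--Askey--Roy), substitutes $z=\cos\theta$, $X=\cos\phi$ to recognize the right-hand side as the expectation against the density $f_{\mu-1/2}$, and extends to all complex $z$ because both sides are polynomials. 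Your route instead expands the binomial, uses the even-moment formula, and sums; this works, and your Pochhammer algebra is right: with $(-n)_{2k}=4^{k}(-\tfrac{n}{2})_{k}(\tfrac{1-n}{2})_{k}$ and $\mathbb{E}[X_{\mu-1/2}^{2k}]=\tfrac{(2k)!}{4^{k}k!\,(\mu+1/2)_{k}}$ the sum collapses to $z^{n}\,{_{2}F_{1}}\!\left(-\tfrac{n}{2},\tfrac{1-n}{2};\mu+\tfrac12;1-z^{-2}\right)$. The one caveat is that this is \emph{not} literally any of the three representations in \eqref{gegen-hyper} (their arguments are $\tfrac{1-x}{2}$, $\tfrac{2}{1-x}$, $\tfrac{x-1}{x+1}$, and your suggestion of pulling out powers of $z\pm1$ to reach the latter two does not produce a pure power series in those variables because of the residual $z^{n-2k}$ factor). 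You need one further standard step: apply the quadratic transformation ${_{2}F_{1}}(2a,2b;a+b+\tfrac12;w)={_{2}F_{1}}(a,b;a+b+\tfrac12;4w(1-w))$ with $w=\tfrac{1-z}{2}$ to the first line of \eqref{gegen-hyper}, then a Pfaff transformation, which yields exactly $\tfrac{(2\mu)_{n}}{n!}z^{n}{_{2}F_{1}}(-\tfrac{n}{2},\tfrac{1-n}{2};\mu+\tfrac12;1-z^{-2})$; alternatively cite this quadratic-transformed representation directly. With that ingredient supplied, your argument is complete and more self-contained (only the moment formula plus hypergeometric identities), whereas the paper's proof is shorter but leans on the classical integral representation as a black box; your check $C_{n}^{\mu}(1)=\tfrac{(2\mu)_{n}}{n!}$ is a sensible consistency test but of course not a substitute for the matching.
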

\begin{proof}
The Laplace integral representation 
\begin{equation}
C_{n}^{\mu}(\cos \theta) = \frac{\Gamma(n + 2 \mu)}{2^{2 \mu - 1} n! 
\Gamma^{2}(\mu)} 
\int_{0}^{\pi}  \left( \cos \theta + i \sin \theta \cos \phi \right)^{n} 
\sin^{2 \mu - 1} \phi \, d \phi 
\end{equation}
\noindent
appears as Theorem $6.7.4$ in \cite{andrews-1999a}. The change of variables 
$z = \cos \theta$ and $X = \cos \phi$ gives 
\begin{eqnarray*}
C_{n}^{\mu}(z) & = & \frac{\Gamma(n + 2 \mu)}{2^{2 \mu} \, n! \Gamma^{2}(\mu)}
\int_{-1}^{1} \left( z + \sqrt{z^{2}-1}X \right)^{n} \, (1 - X^{2})^{\mu-1} 
\, dX \\
& = & \frac{(2 \mu)_{n}}{n!} \mathbb{E} \left[ \left( z + \sqrt{z^{2}-1} 
X_{\mu - 1/2} \right)^{n} \right],
\end{eqnarray*}
\noindent
as claimed. Since this is a polynomial identity in $z$, it can be extended
to all $z \in \mathbb{C}$.
\end{proof}

\begin{theorem}
The Gegenbauer polynomial $C_{n}^{\mu}$ and the generalized polynomial 
$\mathcal{N}_{n}^{\mu}$ satisfy the relation
\begin{equation}
\mathcal{N}_{n+1}^{\mu}(z) = \frac{n!}{(2 \mu+1)_{n}} (1-z)^{n} 
C_{n}^{\mu + \tfrac{1}{2}} \left( \frac{1 + z}{1-z} \right).
\end{equation}
\end{theorem}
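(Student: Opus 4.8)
The plan is to derive this directly from the preceding Proposition expressing $C_n^{\mu}$ as a moment of $X_{\mu-1/2}$, together with the defining formula \eqref{nara-exp2} for $\mathcal{N}_n^{\mu}$. First I would apply that Proposition with the parameter shifted from $\mu$ to $\mu+\tfrac{1}{2}$, so that the relevant beta distribution becomes $f_{(\mu+1/2)-1/2}=f_{\mu}$; this yields
\begin{equation*}
C_n^{\mu+\frac12}(w) = \frac{(2\mu+1)_n}{n!}\,\mathbb{E}\left[\left(w + \sqrt{w^2-1}\,X_{\mu}\right)^{n}\right],
\end{equation*}
with $X_\mu \sim f_\mu$.

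Next I would specialize the argument to $w = \dfrac{1+z}{1-z}$ and perform the elementary simplification
\begin{equation*}
w^2-1 = \frac{(1+z)^2-(1-z)^2}{(1-z)^2} = \frac{4z}{(1-z)^2},
\end{equation*}
so that $\sqrt{w^2-1} = \dfrac{2\sqrt{z}}{1-z}$ (for $0<z<1$, say) and hence
\begin{equation*}
w + \sqrt{w^2-1}\,X_\mu = \frac{1+z}{1-z} + \frac{2\sqrt{z}}{1-z}X_\mu = \frac{1+z+2\sqrt{z}\,X_\mu}{1-z}.
\end{equation*}
Substituting this into the moment formula and pulling the deterministic factor $(1-z)^{-n}$ out of the expectation gives
\begin{equation*}
C_n^{\mu+\frac12}\!\left(\frac{1+z}{1-z}\right) = \frac{(2\mu+1)_n}{n!}\,\frac{1}{(1-z)^n}\,\mathbb{E}\left[\left(1+z+2\sqrt{z}\,X_\mu\right)^{n}\right].
\end{equation*}
By the definition \eqref{nara-exp2}, the expectation on the right is exactly $\mathcal{N}_{n+1}^{\mu}(z)$, and rearranging produces the claimed identity.

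The only point requiring care is the choice of branch for $\sqrt{w^2-1}$ (and correspondingly $\sqrt{z}$): the computation above is valid on a real interval such as $0<z<1$, where all quantities are positive. Since both sides of the asserted identity are polynomials in $z$ (the right-hand side because $(1-z)^n C_n^{\mu+1/2}\big(\tfrac{1+z}{1-z}\big)$ is a polynomial of degree $\le n$, the $(1-z)^n$ clearing the denominators introduced by the argument), agreement on an interval forces agreement as polynomials, so the identity extends to all $z \in \mathbb{C}$. This branch bookkeeping is the main (and only mild) obstacle; everything else is the substitution and a one-line algebraic simplification.
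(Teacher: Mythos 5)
Your proof is correct and follows essentially the same route as the paper: apply the moment representation of the Gegenbauer polynomial with parameter $\mu+\tfrac12$ (so the underlying variable is $X_\mu$), substitute $Z=\frac{1+z}{1-z}$, use $Z^2-1=\frac{4z}{(1-z)^2}$ to pull out the factor $(1-z)^{-n}$, and recognize the resulting expectation as $\mathcal{N}_{n+1}^{\mu}(z)$. Your remark on the branch choice and polynomial continuation matches the paper's own justification for extending the identity to all $z\in\mathbb{C}$.
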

\begin{proof}
Introduce the variable 
\begin{equation}
Z = \frac{1+z}{1-z}
\end{equation}
\noindent 
so that 
\begin{equation}
z = \frac{Z-1}{Z+1} \text{ and }
\frac{Z}{\sqrt{Z^{2}-1}} = \frac{1+z}{2 \sqrt{z}}.
\end{equation}
Then 
\begin{eqnarray*}
C_{n}^{\mu + \tfrac{1}{2}} \left( \frac{1+z}{1-z} \right) & = & 
\frac{(2 \mu + 1)_{n}}{n!} \left( \frac{2 \sqrt{z}}{1-z} \right)^{n} 
\mathbb{E} \left[ \left( \frac{1+z}{2 \sqrt{z}} + X_{\mu} \right)^{n} \right] \\
& = & \frac{(2 \mu + 1)_{n}}{n! \, (1-z)^{n}} 
\mathbb{E} \left[ \left( 1+z + 2 \sqrt{z} X_{\mu} \right)^{n} \right] \\
& = & \frac{(2 \mu+1)_{n}}{n! \, (1-z)^{n}}  \mathcal{N}_{n+1}^{\mu}(z),
\end{eqnarray*}
\noindent
using $Z^{2}-1 = 4z/(1-z)^{2}$.
\end{proof}

The expression \eqref{gegen-hyper} now provides hypergeometric expressions 
for the original Narayana polynomials
\begin{equation}
\mathcal{N}_{n+1}(z) = \frac{2(1-z)^{n}}{(n+2)(n+1)} C_{n}^{3/2} 
\left( \frac{1+z}{1-z} \right).
\end{equation}

\begin{corollary}
The Narayana polynomials are given by 
\begin{eqnarray}
\mathcal{N}_{n+1}(z) & = &  (1-z)^{n} {_{2}F_{1}} 
\left(-n, n+3; 2; \frac{z}{z-1} \right) \\
& = & \frac{(2n+2)!}{(n+2)! \, (n+1)!} z^{n} 
{_{2}F_{1}} \left( -n, -n-1; -2n-2; \frac{z-1}{z} \right) \nonumber \\
& = & {_{2}F_{1}} ( -n, -n-1;2;z). \nonumber
\end{eqnarray}
\noindent
This yields the representation as finite sums
\begin{eqnarray}
\mathcal{N}_{n+1}(z) & = & 
\sum_{k=0}^{n} \frac{1}{k+1} \binom{n}{k} \binom{n+k+2}{k} z^{k} 
(1-z)^{n-k} \\
& = & \frac{1}{n+1}
\sum_{k=0}^{n}  \binom{n+1}{k} \binom{2n+2-k}{n-k} z^{n-k} 
(1-z)^{k} \nonumber \\
& = & \frac{1}{n+1}
\sum_{k=0}^{n}  \binom{n+1}{k+1} \binom{n+1}{k} z^{k}. \nonumber
\end{eqnarray}
\end{corollary}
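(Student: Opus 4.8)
The plan is to deduce all six identities from the single relation
$\mathcal{N}_{n+1}(z) = \tfrac{2(1-z)^{n}}{(n+2)(n+1)}\,C_{n}^{3/2}\!\left(\tfrac{1+z}{1-z}\right)$
established just above, together with the three hypergeometric forms of $C_{n}^{\mu}$ recorded in \eqref{gegen-hyper}. First I would specialize \eqref{gegen-hyper} to $\mu=\tfrac{3}{2}$ and $x=\tfrac{1+z}{1-z}$. A direct computation gives
\[
\frac{1-x}{2}=\frac{z}{z-1},\qquad \frac{2}{1-x}=\frac{z-1}{z},\qquad \frac{x-1}{x+1}=z,
\]
together with $x-1=\tfrac{2z}{1-z}$ and $\tfrac{x+1}{2}=\tfrac{1}{1-z}$. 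Feeding these into the three lines of \eqref{gegen-hyper}, using the Pochhammer evaluations $(3)_{n}=(n+2)!/2$ and $2^{n}(\tfrac32)_{n}=(2n+1)!/(2^{n}n!)$, and multiplying by the prefactor $\tfrac{2(1-z)^{n}}{(n+2)(n+1)}$, the normalizing constants collapse and one reads off the three hypergeometric expressions for $\mathcal{N}_{n+1}(z)$. The only step needing a moment's care is the middle one, where the claimed constant $\tfrac{(2n+2)!}{(n+2)!(n+1)!}$ must be matched, after the $(1-z)$ powers cancel, against what comes out of $\tfrac{2}{(n+2)(n+1)}\cdot\tfrac{4^{n}(\tfrac32)_{n}}{n!}$; this reduces to the elementary identity $(2n+2)!=2(n+1)(2n+1)!$.

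For the finite-sum representations I would expand each terminating ${}_{2}F_{1}$ coefficientwise. For the first sum, write ${}_{2}F_{1}(-n,n+3;2;w)=\sum_{k=0}^{n}\tfrac{(-n)_{k}(n+3)_{k}}{(2)_{k}\,k!}\,w^{k}$ with $w=z/(z-1)$, so that $(1-z)^{n}w^{k}=(-1)^{k}z^{k}(1-z)^{n-k}$; then use $(-1)^{k}(-n)_{k}/k!=\binom{n}{k}$ and $(n+3)_{k}/(2)_{k}=\tfrac{1}{k+1}\binom{n+k+2}{k}$ to obtain exactly the first displayed sum. The second sum comes the same way from the middle hypergeometric form with argument $v=(z-1)/z$, now invoking $(-n-1)_{k}=(-1)^{k}(n+1)!/(n+1-k)!$ and $(-2n-2)_{k}=(-1)^{k}(2n+2)!/(2n+2-k)!$; after the factors $(2n+2)!$ and $(n+1)!$ cancel against the prefactor one recognizes the coefficient $\tfrac{1}{n+1}\binom{n+1}{k}\binom{2n+2-k}{n-k}$. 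The third sum is obtained either by the identical expansion of ${}_{2}F_{1}(-n,-n-1;2;z)$ (using $(2)_{k}=(k+1)!$), or, more quickly, straight from the definitions \eqref{nara-poly-def}--\eqref{nara-numb-def}: reindexing $k\mapsto k+1$ in $\mathcal{N}_{n+1}(z)=\sum_{k=1}^{n+1}\tfrac{1}{n+1}\binom{n+1}{k-1}\binom{n+1}{k}z^{k-1}$ gives the stated formula. Equality of the three finite sums among themselves then follows a posteriori.

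The argument is almost entirely bookkeeping; no analytic subtleties arise because every ${}_{2}F_{1}$ appearing terminates on account of the parameter $-n$, so all the identities are between polynomials in $z$ and it suffices to verify them coefficient by coefficient. The one genuine source of friction is sign tracking when the arguments $z/(z-1)$ and $(z-1)/z$ are substituted back and powers of $(1-z)$ are collected, together with the correct conversion of the half-integer Pochhammer symbol $(\tfrac32)_{n}$ into factorials so that every normalization constant lines up; I expect this to be the main, though modest, obstacle.
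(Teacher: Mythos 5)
Your proposal is correct and follows exactly the route the paper intends: specialize the three hypergeometric forms \eqref{gegen-hyper} to $\mu=\tfrac{3}{2}$ at the argument $x=\tfrac{1+z}{1-z}$, combine with $\mathcal{N}_{n+1}(z)=\tfrac{2(1-z)^{n}}{(n+2)(n+1)}C_{n}^{3/2}\bigl(\tfrac{1+z}{1-z}\bigr)$, and expand the terminating series termwise; your Pochhammer conversions and the constant check $(2n+2)!=2(n+1)(2n+1)!$ all line up with what the paper leaves implicit.
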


Note that the first two expressions coincide up to the change of 
summation variable $k \to n-k$ while the third identity is nothing but
\eqref{nara-poly-def}. 

\begin{note}
The representation 
\begin{equation}
C_{n}^{\mu}(z) = \frac{(\mu)_{n}}{n!} (2x)^{n} 
{_{2}F_{1}} \left( - \frac{n}{2}, \frac{1-n}{2}; 1 - n - \mu; \frac{1}{x^{2}}
\right)
\end{equation}
\noindent
that appears in as $6.4.12$ in \cite{andrews-1999a}, gives the expression 
\begin{equation}
\mathcal{N}_{n+1}(z) = \frac{(2n+2)!}{(n+1)! \, (n+2)!} 
\left( \frac{1+z}{2} \right)^{n} 
{_{2}F_{1}} \left( - \frac{n}{2}, \frac{1-n}{2}; -n - \frac{1}{2}; 
\left( \frac{1-z}{1+z} \right)^{2} \right)
\end{equation}
\noindent
equal to the finite sum representation
\begin{equation}
\mathcal{N}_{n+1}(z) = \frac{1}{2^{n-1} (n+2)} 
\sum_{k=0}^{\lfloor n/2 \rfloor} (-1)^{k} \binom{n}{k} 
\binom{2n+1-2k}{n-2k} (1-z)^{2k} (1+z)^{n-2k}.
\end{equation}
\end{note}

\begin{note}
The polynomials 
$S_{n}(z) = z \mathcal{N}_{n}^{1}(z)$  
satisfy the symmetry identity 
\begin{equation}
S_{n}(z) = z^{n+1} S_{n}(z^{-1}).
\end{equation}
\noindent
These polynomials were expressed in \cite{mansour-2009a} as 
\begin{equation}
S_{n}(z) = (z-1)^{n+1} \int_{0}^{z/(z-1)} P_{n}(2x-1) \, dx 
\label{nara-int0}
\end{equation}
\noindent
where $P_{n}(x) = C_{n}^{1/2}(x)$ are the Legendre polynomials. An equivalent 
formulation is provided next.
\end{note}

\begin{theorem}
The polynomials $S_{n}(z)$ are given by
\begin{equation*}
S_{n}(z) = \frac{1}{2^{n+1}} \sum_{k=0}^{\lfloor{n/2 \rfloor}} 
\frac{(-1)^{k}}{n+1-k} \binom{2n-2k}{n-k} \binom{n+1-k}{k} 
(z-1)^{2k} (z+1)^{n+1-2k}.
\end{equation*}
\end{theorem}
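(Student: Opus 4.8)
The plan is to start from the integral representation
\[
S_n(z) = (z-1)^{n+1}\int_0^{z/(z-1)} P_n(2x-1)\,dx
\]
recorded in the Note preceding the statement, and to perform the integration explicitly. First I substitute $u = 2x-1$: the upper limit $x=z/(z-1)$ becomes $u=Z:=(z+1)/(z-1)$ and the lower limit becomes $u=-1$, so that
\[
S_n(z) = \frac{(z-1)^{n+1}}{2}\int_{-1}^{Z} P_n(u)\,du .
\]
Then I invoke the classical antiderivative relation $(2n+1)P_n=\big(P_{n+1}-P_{n-1}\big)'$ together with the endpoint values $P_m(-1)=(-1)^m$, which make the contribution at $u=-1$ vanish since $(-1)^{n+1}-(-1)^{n-1}=0$. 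This yields the closed form
\[
S_n(z) = \frac{(z-1)^{n+1}}{2(2n+1)}\Big[P_{n+1}(Z) - P_{n-1}(Z)\Big]
\]
for $n\ge 1$, the case $n=0$ being checked directly.

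The second step is to expand each Legendre value via the explicit power-sum formula
\[
P_m(x) = \frac{1}{2^m}\sum_{k\ge 0}(-1)^k\binom{m}{k}\binom{2m-2k}{m}\,x^{m-2k}.
\]
Since $x^{m-2k}$ with $x=Z=(z+1)/(z-1)$ equals $(z+1)^{m-2k}/(z-1)^{m-2k}$, multiplying $P_{n+1}(Z)$ by $(z-1)^{n+1}$ and $P_{n-1}(Z)$ by $(z-1)^{n+1}=(z-1)^2(z-1)^{n-1}$ turns both into finite sums whose terms are the monomials $(z+1)^{n+1-2k}(z-1)^{2k}$; a shift of the summation index by one in the $P_{n-1}$ sum aligns the two expansions over the same set of monomials.

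The third step is to add the two sums term by term. After extracting the common factor $(-1)^k(z+1)^{n+1-2k}(z-1)^{2k}/\big(2^{n+2}(2n+1)\big)$, the coefficient of each monomial is
\[
\binom{n+1}{k}\binom{2n+2-2k}{n+1} + 4\binom{n-1}{k-1}\binom{2n-2k}{n-1},
\]
and after writing both binomial products over the common denominator $k!\,(n+1-k)!\,(n+1-2k)!$ the bracket collapses because of the elementary identity
\[
(2n+2-2k)(2n+1-2k) + 4k(n+1-k) = 2(2n+1)(n+1-k).
\]
The factor $2n+1$ cancels the one already in the denominator, and what remains simplifies (using $(n+1-k)! = (n+1-k)(n-k)!$) to exactly $\dfrac{1}{n+1-k}\binom{2n-2k}{n-k}\binom{n+1-k}{k}$, which together with the overall constant $1/2^{n+1}$ is the asserted expression.

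The main obstacle is the bookkeeping in the third step: matching the two expansions under the index shift, and checking that the boundary index $k=0$ (where only the $P_{n+1}$ sum contributes) and the top index produce the same general coefficient as the interior indices, so that the quadratic identity above can be applied uniformly. Everything else is routine. A more pedestrian alternative would instead start from the finite-sum representation of $\mathcal{N}_{n+1}$ already displayed in the paper, use $S_n(z)=z\,\mathcal{N}_n^{1}(z)$ and $z=\tfrac12\big[(1+z)-(1-z)\big]$, and then argue that the resulting odd powers of $(1-z)$ cancel; this needs an extra cancellation argument and is less transparent than the Legendre integration above.
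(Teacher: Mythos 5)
Your proposal is correct, and the algebra checks out: the substitution $u=2x-1$, the vanishing of the boundary term at $u=-1$, the quadratic identity $(2n+2-2k)(2n+1-2k)+4k(n+1-k)=2(2n+1)(n+1-k)$, and the final simplification of the merged coefficient to $\frac{(2n-2k)!}{2^{n+1}\,k!\,(n-k)!\,(n+1-2k)!}=\frac{1}{2^{n+1}(n+1-k)}\binom{2n-2k}{n-k}\binom{n+1-k}{k}$ are all valid. Your route differs from the paper's in the integration step: the paper applies the Gegenbauer antiderivative rule $\int C_{n}^{\mu}(x)\,dx=\frac{1}{2(\mu-1)}C_{n+1}^{\mu-1}(x)$ with $\mu=\tfrac12$, obtaining $S_{n}(z)=-\tfrac12(z-1)^{n+1}C_{n+1}^{-1/2}\bigl(\tfrac{z+1}{z-1}\bigr)$, and then reads off the coefficients from a single classical expansion of $C_{n+1}^{-1/2}$ via the Pochhammer evaluation $\bigl(-\tfrac12\bigr)_{k}=-2^{1-2k}\tfrac{(2k-2)!}{(k-1)!}$; you instead use the purely Legendre antiderivative $(2n+1)\int P_{n}=P_{n+1}-P_{n-1}$, which forces the two-sum merge but avoids any negative-parameter Gegenbauer manipulation. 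The trade-off is a wash: the paper's version has one expansion and a slightly delicate Pochhammer identity, yours has two expansions and an elementary quadratic identity. One caveat about your ``top index'' remark: both your derivation and the paper's actually produce the sum with upper limit $\lfloor (n+1)/2\rfloor$, and for odd $n$ the extra term at $k=(n+1)/2$ is genuinely nonzero (for $n=1$ it is $-\tfrac14(z-1)^{2}$, which is needed to recover $S_{1}(z)=z$); so the upper limit $\lfloor n/2\rfloor$ in the stated theorem is a slip in the paper, and you should state your conclusion with $\lfloor (n+1)/2\rfloor$ rather than claim the extra term matches or vanishes.
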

\begin{proof}
The integration rule 
\begin{equation}
\int C_{n}^{\mu}(x) \, dx = \frac{1}{2(\mu-1)} C_{n+1}^{\mu-1}(x)
\end{equation}
\noindent
implies
\begin{equation}
\int_{0}^{z/(z-1)} C_{n}^{1/2}(2x-1) \, dx = - \frac{1}{2} C_{n+1}^{-1/2}
\left( \frac{z+1}{z-1} \right),
\end{equation}
\noindent
since the generating function 
\begin{equation}
\sum_{n=0}^{\infty} t^{n} C_{n}^{-1/2}(z) = (1 - 2zt+t^{2})^{1/2}
\end{equation}
\noindent
gives $C_{n+1}^{-1/2}(-1) = 0$ for $n > 1$. Then \eqref{nara-int0} yields 
\begin{equation}
\label{nara-int1}
S_{n}(z) = - \frac{1}{2} (z-1)^{n+1} C_{n+1}^{-1/2} 
\left( \frac{z+1}{z-1} \right).
\end{equation}

A classical formula for the 
Gegenbauer polynomials states
\begin{equation}
C_{n}^{\mu}(z) = \sum_{k=0}^{\lfloor{ n/2 \rfloor}} \frac{(-1)^{k}}{k!}
\frac{(\mu)_{n-k}}{(n-2k)!} (2z)^{n-2k}
\end{equation}
\noindent
and the identity 
\begin{equation*}
\left( - \frac{1}{2} \right)_{k} = - \frac{1}{2^{2k-1}} \frac{(2k-2)!}{(k-1)!}
\end{equation*}
\noindent
produce 
\begin{equation}
C_{n}^{-1/2}(z) = \frac{1}{2^{n-1}}
\sum_{k=0}^{\lfloor{ n/2 \rfloor}} \frac{(-1)^{k+1}}{n-k}
\binom{2n-2k-2}{n-k-1} \binom{n-k}{k} z^{n-2k}.
\end{equation}

The result now follows from \eqref{nara-int1}.
\end{proof}

\section{The generalization of the numbers $a_{n}$} 
\label{S:special} 

The terms forming the original suggestion of Zeilberger 
\begin{equation}
a_{n} = \frac{2A_{n}}{C_{n}}
\end{equation}
\noindent
have been given a probabilistic interpretation: let $X$ be a random variable 
with a symmetric beta distribution function with parameter $\mu = 1$ given 
explicitly in \eqref{form-f1}. The numerator $A_{n}$ is 
\begin{equation}
A_{n} = (-1)^{n+1} \kappa(2n) 2^{2n}
\end{equation}
\noindent
where $\kappa(2n)$ is the even-order cumulant of the scaled random 
variable $X_{*} = 2X$. The denominator $C_{n}$ is interpreted as the even-order 
moment of $X_{*}$: 
\begin{equation}
C_{n} = \mathbb{E} \left[ X_{*}^{2n} \right].
\end{equation}
\noindent
These notions are used now to define an extension of the coefficients $a_{n}$.

\begin{definition}
Let $X$ be a random variable with vanishing odd cumulants. The 
numbers $a_{n}(\mu)$ are defined by 
\begin{equation}
a_{n}(\mu) = \frac{2 (-1)^{n+1} \kappa(2n)}{\mathbb{E}\left[ X_{*}^{2n} 
\right]}
\end{equation}
\end{definition}

\noindent
In the special case $X_{*} = 2X$ with $X \sim f_{\mu}$, these numbers 
are computed using the cumulants
\begin{equation}
\kappa_{\mu}(2n) = (-1)^{n+1} 2^{2n+1} (2n-1)! \zeta_{\mu}(2n)
\end{equation}
\noindent
and the even order moments
\begin{equation}
\mathbb{E}\left[ X_{*}^{2n} \right] = \frac{(2n)!}{n!} \frac{1}{(\mu+1)_{n}}
\end{equation}
\noindent 
to produce
\begin{equation}
a_{n}(\mu) =  
2^{2n+1} \, (n-1)! \, (\mu + 1)_{n} \, \zeta_{\mu}(2n).
\label{anmu-form1}
\end{equation}
The value 
\begin{equation}
\zeta_{\mu}(2) = \frac{1}{4(\mu+1)}
\end{equation}
\noindent
yields the initial condition $a_{1}(\mu) = 2$. 

\smallskip

The recurrence \eqref{recu-zeta1} now provides the next result. Recall that 
when $x$ is not necessarily a  positive integer, the binomial coefficient is 
given by 
\begin{equation}
\binom{x}{k} = \frac{\Gamma(x+1)}{\Gamma(x-k+1) \, k!}.
\end{equation}

\smallskip

\begin{prop}
The coefficients $a_{n}(\mu)$ satisfy the recurrence 
\begin{equation}
a_{n}(\mu) = \frac{1}{2 \binom{n+\mu-1}{n-1}}
\sum_{k=1}^{n-1} \binom{n+\mu-1}{n-k-1} \binom{n+\mu-1}{k-1}
a_{k}(\mu) a_{n- k}(\mu),
\label{gen-recu}
\end{equation}
\noindent
with initial condition $a_{1}(\mu)  = 2$.
\end{prop}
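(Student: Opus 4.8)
The plan is to derive the recurrence \eqref{gen-recu} directly from the closed form \eqref{anmu-form1} together with the Bessel-zeta recurrence \eqref{recu-zeta1}. First I would substitute $a_{k}(\mu) = 2^{2k+1}(k-1)!(\mu+1)_{k}\zeta_{\mu}(2k)$ into the right-hand side of \eqref{gen-recu}. A single term of the sum becomes
\begin{equation*}
\binom{n+\mu-1}{n-k-1}\binom{n+\mu-1}{k-1}\,
2^{2k+1}(k-1)!(\mu+1)_{k}\zeta_{\mu}(2k)\;
2^{2(n-k)+1}(n-k-1)!(\mu+1)_{n-k}\zeta_{\mu}(2n-2k),
\end{equation*}
so the powers of $2$ combine to $2^{2n+2}$ and the $\zeta$-factors are exactly those appearing in \eqref{recu-zeta1}. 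The task then reduces to showing that the product of binomial and factorial and Pochhammer factors is independent of $k$, i.e. equals a constant (in $k$) times $(n-1)!(\mu+1)_{n}$, after which the sum over $k$ collapses via \eqref{recu-zeta1} to $(n+\mu)\zeta_{\mu}(2n)$ and one recovers $a_{n}(\mu)$.

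The key combinatorial identity I would isolate and verify is
\begin{equation*}
\binom{n+\mu-1}{n-k-1}\binom{n+\mu-1}{k-1}(k-1)!(n-k-1)!(\mu+1)_{k}(\mu+1)_{n-k}
= c\,\binom{n+\mu-1}{n-1}(n-1)!(\mu+1)_{n}
\end{equation*}
for a constant $c$ not depending on $k$, where the factor of $4$ and the factor of $2$ in the prefactor $\tfrac{1}{2\binom{n+\mu-1}{n-1}}$ and the $2^{2n+2}$ versus $2^{2n+1}$ in $a_{n}(\mu)$ must be tracked so that $c$ comes out correctly. Writing each binomial coefficient through Gamma functions, $\binom{n+\mu-1}{k-1} = \Gamma(n+\mu)/(\Gamma(n+\mu-k+1)\,(k-1)!)$ and similarly for the other, the factorials $(k-1)!$ and $(n-k-1)!$ cancel against the denominators, and one is left with $\Gamma(n+\mu)^{2}/(\Gamma(n+\mu-k+1)\Gamma(\mu+k+1)) \cdot (\mu+1)_{k}(\mu+1)_{n-k}$. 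Expanding $(\mu+1)_{k} = \Gamma(\mu+k+1)/\Gamma(\mu+1)$ and $(\mu+1)_{n-k} = \Gamma(\mu+n-k+1)/\Gamma(\mu+1)$ causes the remaining $k$-dependence to cancel completely, leaving $\Gamma(n+\mu)^{2}/\Gamma(\mu+1)^{2}$, which is indeed independent of $k$.

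After this cancellation the right-hand side of \eqref{gen-recu} equals
\begin{equation*}
\frac{2^{2n+2}}{2\binom{n+\mu-1}{n-1}}\cdot\frac{\Gamma(n+\mu)^{2}}{\Gamma(\mu+1)^{2}}
\sum_{k=1}^{n-1}\zeta_{\mu}(2k)\zeta_{\mu}(2n-2k),
\end{equation*}
and \eqref{recu-zeta1} rewrites the sum as $(n+\mu)\zeta_{\mu}(2n)$. It then remains to simplify the constant: $\binom{n+\mu-1}{n-1} = \Gamma(n+\mu)/(\Gamma(\mu+1)(n-1)!)$, so the prefactor becomes $2^{2n+1}(n-1)!\,\Gamma(n+\mu)/\Gamma(\mu+1) = 2^{2n+1}(n-1)!(\mu+1)_{n}\cdot\Gamma(\mu+1)/\Gamma(\mu+1)$ — more carefully, $\Gamma(n+\mu)/\Gamma(\mu+1) \cdot (n+\mu) = \Gamma(n+\mu+1)/\Gamma(\mu+1) = (\mu+1)_{n}$ — which delivers exactly $2^{2n+1}(n-1)!(\mu+1)_{n}\zeta_{\mu}(2n) = a_{n}(\mu)$. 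The initial condition $a_{1}(\mu)=2$ comes from $\zeta_{\mu}(2) = 1/(4(\mu+1))$ as already noted. The main obstacle is purely bookkeeping: keeping the several factors of $2$, the shifts between $\Gamma(n+\mu)$ and $(\mu+1)_{n}$, and the off-by-one in the binomial coefficients consistent so that the constant reduces to exactly $1$ rather than some spurious factor; once the $k$-independence of the combinatorial factor is observed, the rest is forced.
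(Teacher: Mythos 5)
Your proposal is correct and is essentially the paper's own argument: both proofs combine the closed form \eqref{anmu-form1} with the Bessel-zeta convolution \eqref{recu-zeta1}, yours merely running the substitution in the reverse direction (verifying that the right-hand side of \eqref{gen-recu} collapses to $a_{n}(\mu)$) instead of rewriting \eqref{recu-zeta1} in terms of the $a_{n}(\mu)$. The Gamma-function bookkeeping you outline, including the cancellation of all $k$-dependence and the final identification $\Gamma(n+\mu+1)/\Gamma(\mu+1)=(\mu+1)_{n}$, checks out.
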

\begin{proof}
Start with the convolution identity for Bessel zeta functions \eqref{recu-zeta1}
and replace each zeta function by its expression in terms of $a_{n}(\mu)$ 
from \eqref{anmu-form1}, which gives
\begin{multline*}
(n+\mu) \frac{a_{n}(\mu)}{2^{2n+1} (n-1)! (\mu+1)_{n}} = \\
\sum_{k=1}^{n-1} \frac{a_{k}(\mu)}{2^{2k+1} (k-1)! (\mu+1)_{k}} \, 
\frac{a_{n-k}(\mu)}{2^{2n-2k+1} (n-k-1)! (\mu+1)_{n-k}} 
\end{multline*}
\noindent
and after simplification 
\begin{equation*}
a_{n}(\mu) = \frac{1}{2(n+\mu)} 
\sum_{k=1}^{n-1} \frac{(n-1)!}{(k-1)! (n-k-1)!} 
\frac{(\mu+1)_{n}}{(\mu+1)_{k} (\mu+1)_{n-k}} a_{k}(\mu) a_{n-k}(\mu).
\end{equation*}
\noindent
The resut now follows by elementary algebra. 
\end{proof}

\begin{note}
In the case $\mu = 1$, the recurrence \eqref{gen-recu} becomes \eqref{rec-11}
and the coefficients $a_{n}(1)$ are the original numbers $a_{n}$.
\end{note}

\begin{note}
The recurrence \eqref{gen-recu} can be written as 
\begin{equation*}
a_{n}(\mu) = \frac{1}{2} 
\sum_{k=1}^{n-1} 
\frac{\Gamma(n) \Gamma(\mu+1) \Gamma(n+ \mu)}
{\Gamma(\mu+k+1) \Gamma(n+ \mu-k+1) 
\Gamma(n-k) \Gamma(k)} a_{k}(\mu) a_{n-k}(\mu).
\end{equation*}
\end{note}

\begin{theorem}
The coefficients $a_{n}(\mu)$ are 
positive and increasing for
$n \geq \lfloor \frac{\mu+3}{2} \rfloor$. 
\end{theorem}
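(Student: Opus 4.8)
The plan is to read off both assertions from the convolution recurrence \eqref{gen-recu},
\[
a_{n}(\mu) = \frac{1}{2\binom{n+\mu-1}{n-1}}\sum_{k=1}^{n-1}\binom{n+\mu-1}{n-k-1}\binom{n+\mu-1}{k-1}a_{k}(\mu)a_{n-k}(\mu),
\]
in exactly the way the classical case ($\mu=1$) was treated in Section~\ref{S:positivity} from \eqref{rec-11}. \textbf{Positivity.} First I would check that for $\mu>-\tfrac12$ every generalized binomial coefficient occurring in \eqref{gen-recu} is strictly positive. Indeed, for $1\le k\le n-1$ the coefficient $\binom{n+\mu-1}{k-1}$ is a product of the numbers $n+\mu-1,\dots,n+\mu-k+1$, the smallest being $n+\mu-k+1\ge\mu+2>0$ since $k\le n-1$; the same holds for $\binom{n+\mu-1}{n-k-1}$ (replace $k$ by $n-k$) and for $\binom{n+\mu-1}{n-1}$, whose smallest factor is $\mu+1>0$. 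Thus \eqref{gen-recu} exhibits $a_{n}(\mu)$ as a positive combination of the products $a_{k}(\mu)a_{n-k}(\mu)$, $1\le k\le n-1$, and a straightforward induction starting from $a_{1}(\mu)=2$ gives $a_{n}(\mu)>0$ for all $n\ge1$ and all $\mu>-\tfrac12$.

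\textbf{Monotonicity.} For $n\ge3$ I would retain only the two extreme terms $k=1$ and $k=n-1$ in the sum of \eqref{gen-recu}. By the symmetry $k\leftrightarrow n-k$ these terms are equal, each being $\binom{n+\mu-1}{n-2}\binom{n+\mu-1}{0}a_{1}(\mu)a_{n-1}(\mu)=2\binom{n+\mu-1}{n-2}a_{n-1}(\mu)$, so discarding the remaining (positive) terms and using the elementary ratio $\binom{n+\mu-1}{n-2}\big/\binom{n+\mu-1}{n-1}=(n-1)/(\mu+1)$ yields
\[
a_{n}(\mu)\ \ge\ \frac{4\binom{n+\mu-1}{n-2}}{2\binom{n+\mu-1}{n-1}}\,a_{n-1}(\mu)\ =\ \frac{2(n-1)}{\mu+1}\,a_{n-1}(\mu).
\]
Since $a_{n-1}(\mu)>0$ by the first part, this gives $a_{n}(\mu)>a_{n-1}(\mu)$ as soon as $2(n-1)>\mu+1$, i.e. $n>\tfrac{\mu+3}{2}$, equivalently $n\ge\lfloor\tfrac{\mu+3}{2}\rfloor+1$; hence $\{a_{n}(\mu)\}$ is increasing from the index $\lfloor\tfrac{\mu+3}{2}\rfloor$ onward, the remaining low-index cases being checked directly against the explicit values $a_{1}(\mu)=2$ and $a_{2}(\mu)=2/(\mu+1)$ coming from \eqref{anmu-form1}.

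I do not anticipate a serious obstacle: the argument is essentially a transcription of the $\mu=1$ proof of Section~\ref{S:positivity}. The only two points needing a little care are the positivity of the generalized binomial coefficients when $\mu$ is not a positive integer (settled above by the hypothesis $\mu>-\tfrac12$, which is exactly the range on which $f_{\mu}$ is defined), and the bookkeeping around the boundary index $\lfloor\tfrac{\mu+3}{2}\rfloor$, where one should compare the crude bound $a_{n}(\mu)\ge \tfrac{2(n-1)}{\mu+1}a_{n-1}(\mu)$ with the explicit small values rather than with the recurrence itself.
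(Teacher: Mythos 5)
Your argument is correct and essentially the paper's own: the paper likewise keeps only the extreme terms $k=1$ and $k=n-1$ of \eqref{gen-recu} to obtain $a_{n}(\mu) \geq \tfrac{2(n-1)}{\mu+1}\,a_{n-1}(\mu)$, hence $a_{n}(\mu)-a_{n-1}(\mu) \geq \tfrac{2n-3-\mu}{\mu+1}\,a_{n-1}(\mu)$, and concludes exactly as you do. The only (minor) difference is the positivity step, which the paper reads off at once from the closed form \eqref{anmu-form1}, a positive multiple of $\zeta_{\mu}(2n)>0$, whereas you derive it by induction from the recurrence after checking that the generalized binomial coefficients are positive for $\mu>-\tfrac{1}{2}$; both routes are valid.
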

\begin{proof}
The positivity is clear from \eqref{anmu-form1}. Now take the terms 
corresponding to $k=1$ and $k=n-1$ in \eqref{gen-recu} to obtain 
\begin{equation}
a_{n}(\mu) \geq \frac{n-1}{\mu+1} a_{1}(\mu) a_{n-1}(\mu)  =
\frac{2(n-1)}{\mu+1} a_{n-1}(\mu). 
\end{equation}
\noindent
This yields
\begin{equation}
a_{n}(\mu) - a_{n-1}(\mu) \geq \frac{2n-3 - \mu}{\mu+1} a_{n-1}(\mu)
\end{equation}
\noindent
and the result follows.
\end{proof}

Some other special cases are considered next. 

\medskip

\noindent
\textbf{The case $\mu = 0$}. In this situation the distribution is the 
arcsine distribution given by
\begin{equation}
\label{form-f0}
f_{0}(x) = 
\begin{cases}
\frac{1}{\pi} \, \, \frac{1}{\sqrt{1-x^{2}}}, & \quad \text{ for } |x| \leq 1  \\
0, & \quad \text{ otherwise}.
\end{cases}
\end{equation}
\noindent
By the recurrence on the $\zeta_{0}$ function, the 
coefficients 
\begin{equation}
a_{n}(0) = 2^{2n} (n-1)! \, n! \, \zeta_{0}(2n)
\end{equation}
\noindent
satisfy the recurrence 
\begin{equation}
a_{n}(0) = \frac{1}{2} \sum_{k=1}^{n-1} \binom{n-1}{k} \binom{n-1}{k-1} 
a_{k}(0) a_{n-k}(0)
\label{recu-a88}
\end{equation}
\noindent
with $a_{1}(0) =2$. Now define as Lasalle 
$b_{n} = \tfrac{1}{2}a_{n}(0)$ and then 
\eqref{recu-a88} becomes 
\begin{eqnarray}
b_{n} & = &  \sum_{k=1}^{n-1} \binom{n-1}{k} \binom{n-1}{k-1} b_{k}b_{n-k}, 
\label{recu-b} \\
b_{1} &  = & 1. \nonumber 
\end{eqnarray}
\noindent
In particular $b_{n}$ is a positive integer. 

\medskip

The following comments are obtained by an analysis similar to that for $a_{n}$.

\begin{note}
The recurrence 
\begin{equation*}
\sum_{j=1}^{n} (-1)^{j-1} \binom{n}{j} \binom{n-1}{j-1}b_{j} = 1 
\end{equation*}
\noindent
gives the generating function
\begin{equation*}
\sum_{j=1}^{\infty} \frac{(-1)^{j-1} b_{j}}{j!} \frac{x^{2j-2}}{(j-1)!} 
= \frac{I_{1}(2x)}{x \, I_{0}(2x)} = 
\frac{1}{2x} \frac{d}{dx} \log I_{0}(2x).
\end{equation*}
\end{note}

\begin{note}
The sequence $b_{n}$ admits a determinant representation 
$b_{n} = \text{det}(M_{n})$, where 
\begin{equation}
M_{n} = 
\begin{pmatrix} 
1 & \binom{1}{1} \binom{1-1}{1-1} & 0 & 0 & \cdots & 0 \\
1 & \binom{2}{1} \binom{2-1}{1-1} & \binom{2}{2} \binom{2-1}{2-1}
 & 0 & \cdots & 0 \\
1 & \binom{3}{1} \binom{3-1}{1-1} & \binom{3}{2} \binom{3-1}{2-1}
 & \binom{3}{3} \binom{3-1}{3-1} & \cdots & 0 \\
\cdots  & \cdots  & \cdots 
 & \cdots  & \cdots & \cdots  \\
\cdots  & \cdots  & \cdots 
 & \cdots  & \cdots & \cdots  \\
1 & \binom{n}{1} \binom{n-1}{1-1} & \binom{n}{2} \binom{n-1}{2-1}
 & \binom{n}{3} \binom{n-1}{3-1} & \cdots & \binom{n}{n-1} \binom{n-1}{n-2} 
\end{pmatrix}
\end{equation}
\end{note}

\begin{note}
The identity $I_{2}(x)  = I_{0}(x) - \frac{2}{x} I_{1}(x)$ is expressed as 
\begin{equation}
\frac{I_{1}(2x)}{x I_{0}(2x)} \left[ 1 + \frac{1}{2} x^{2} \frac{2 I_{2}(2x)}
{x I_{1}(2x)} \right] = 1
\end{equation}
\noindent
provides the relation
\begin{equation}
b_{n} = \frac{1}{2} \sum_{j=1}^{n-1} \binom{n-1}{j} \binom{n}{j-1} b_{j} 
a_{n-j}.
\end{equation}
\end{note}

\medskip

\noindent
\textbf{The case $\mu = \tfrac{1}{2}$}. In this situation the distribution is 
the uniform distribution on $[-1,1]$ with even moments 
\begin{equation}
\mathbb{E} X_{*}^{2n} = \frac{2^{2n}}{2n+1}
\end{equation}
\noindent
and vanishing odd moments. The sequence of cumulants is 
\begin{equation}
\kappa_{1/2}(2n) = 2 (-1)^{n+1} (2n-1)! \, \zeta_{1/2}(2n)
\end{equation}
\noindent
where the Bessel zeta function is 
\begin{equation}
\zeta_{1/2}(2n) = \sum_{k=1}^{\infty} \frac{1}{\pi^{2n} \, k^{2n}} = 
\frac{1}{\pi^{2n}} \zeta(2n) = 
\frac{2^{2n-1}}{(2n)!} |B_{2n}|,
\end{equation}
\noindent
where $B_{n}$ are the Bernoulli numbers. This follows from the identity 
\begin{equation}
J_{1/2}(x) = \sqrt{\frac{2}{\pi x}} \sin x.
\end{equation}
\noindent
This yields
\begin{equation}
\kappa_{1/2}(2n) = 2^{2n} \frac{B_{2n}}{2n} \text{ and } \kappa_{1/2}(2n+1) 
= 0, 
\end{equation}
\noindent 
with $\kappa_{1/2}(0) = 0$. These are the coefficients of 
$u^{n}/n!$ in the cumulant moment generating function 
\begin{equation}
\log \varphi_{1/2}(u) = \log \frac{\sinh u}{u} = 
\frac{1}{6}u^{2} - \frac{1}{180}u^{4}  + \frac{1}{2835}u^{6} + \cdots.
\end{equation}
\noindent
Finally, the corresponding sequence 
\begin{equation}
a_{n}\left( \tfrac{1}{2} \right) =  
\frac{2 (-1)^{n+1} \kappa(2n)}{\mathbb{E} \left[ X_{*}^{2n} \right]}
\end{equation}
\noindent
is given by 
\begin{equation}
a_{n} \left( \tfrac{1}{2} \right)  =   2^{2n} \frac{2n+1}{n} |B_{2n}|.
\end{equation}
\noindent
The first few terms are 
\begin{equation}
a_{1} \left( \tfrac{1}{2} \right)  = 2, \, 
a_{2} \left( \tfrac{1}{2} \right)  = \frac{4}{3}, \, 
a_{3} \left( \tfrac{1}{2} \right)  = \frac{32}{9}, \, 
a_{4} \left( \tfrac{1}{2} \right)  = \frac{96}{5}, \, 
a_{5} \left( \tfrac{1}{2} \right)  = \frac{512}{3}, 
\end{equation}
\noindent
as expected, this is an increasing sequence for $n \geq 3$. The 
convolution identity 
\eqref{recu-zeta1} for Bessel zeta functions 
gives the well-known quadratic relation for the 
Bernoulli numbers
\begin{equation}
\sum_{k=1}^{n-1} \binom{2n}{2k} B_{2k} B_{2n-2k} = - (2n+1)B_{2n}, 
\quad \text{ for } n > 1.
\end{equation}
\noindent
Moreover, the moment-cumulants relation \eqref{convolution} gives, replacing 
$n$ by $2n$ and after simplification, the other well-known identity 
\begin{equation}
\sum_{j=1}^{n} \binom{2n+1}{2j} 2^{2j}B_{2j} = 2n, \text{ for } n \geq 1.
\end{equation}

\begin{note}
The generating function of the sequence $a_{n}\left( \tfrac{1}{2} \right)$ 
is given by 
\begin{equation*}
\frac{I_{3/2}(x)}{x I_{1/2}(x)} =  \frac{x \tanh x  - 1}{x^{2}} = 
\sum_{j=1}^{\infty} \frac{(-1)^{j-1} 
2a_{j}\left( \tfrac{1}{2} \right)}{(2j+1)(2j-1)!}x^{2j-2}.
\end{equation*}
\end{note}

\medskip

\noindent
\textbf{The limiting case $\mu = -\tfrac{1}{2}$} has the probability 
distribution 
\begin{equation}
f_{-1/2}(x) = \frac{1}{2} \delta(x-1) + \frac{1}{2} \delta(x+1)
\end{equation}
(the discrete Rademacher distribution). For a Rademacher random variable 
$X$, the odd moments of $X_{*} = 2X$ vanish while the even order moments are 
\begin{equation}
\mathbb{E} \left[ X_{*}^{2n} \right] = 2^{2n}.
\end{equation}
\noindent
Therefore 
\begin{equation}
\kappa_{-1/2}(2n) = (-1)^{n+1} 2^{2n+1} (2n-1)! \, \zeta_{-1/2}(2n).
\end{equation}
\noindent
The identity 
\begin{equation}
J_{-1/2}(x) = \sqrt{ \frac{2}{\pi x}} \cos x
\end{equation}
\noindent
shows that $j_{k,-1/2} = (2k-1)\pi/2$ and therefore 
\begin{equation}
\zeta_{-1/2}(2n) = \sum_{k=1}^{\infty} \frac{2^{2n}}{\pi^{2n} (2k-1)^{2n}} 
= \frac{2^{2n}-1}{\pi^{2n}} \zeta(2n).
\end{equation}
\noindent
The expression for $\kappa_{-1/2}(2n)$ may be simplified by the relation 
\begin{equation}
E_{n} = - \frac{2}{n+1}(2^{n+1}-1)B_{n+1}
\end{equation}
\noindent
between the Euler numbers $E_{n}$ and the Bernoulli numbers. It follows that 
\begin{equation}
\kappa_{-1/2}(2n) =  -2^{4n-1}E_{2n-1}.
\end{equation}
\noindent
The corresponding sequence $a_{n} \left( - \tfrac{1}{2} \right)$ is 
now given by 
\begin{equation}
a_{n} \left( - \tfrac{1}{2} \right) = (-1)^{n} 2^{2n} E_{2n-1}
\end{equation}
\noindent
and its first few values are 
\begin{equation*}
a_{1} \left( -\tfrac{1}{2} \right)  = 2, \, 
a_{2} \left( -\tfrac{1}{2} \right)  = 4, \, 
a_{3} \left( -\tfrac{1}{2} \right)  = 32, \, 
a_{4} \left( -\tfrac{1}{2} \right)  = 544,  \, 
a_{5} \left( -\tfrac{1}{2} \right)  = 15872,  \, 
\end{equation*}

\begin{note}
The generating function of the sequence 
$a_{n} \left( - \tfrac{1}{2} \right)$ is given by 
\begin{equation*}
\frac{I_{1/2}(x)}{x I_{-1/2}(x)} =  \frac{\tanh x}{x} = 
\sum_{j=1}^{\infty} \frac{(-1)^{j-1} 
2a_{j}\left( - \tfrac{1}{2} \right)}{(2j-1)!}x^{2j-2}.
\end{equation*}
\end{note}

\begin{note}
The convolution identity \eqref{recu-zeta1} yields the well-known quadratic 
recurrence relation
\begin{equation}
\sum_{k=1}^{n-1} \binom{2n-2}{2k-1} E_{2k-1} E_{2n-2k-1} = 
2 E_{2n-1}, \text{ for } n > 1,
\end{equation}
\noindent
and the moment-cumulant relation \eqref{convolution} gives the other 
well-known identity
\begin{equation}
\sum_{k=1}^{n} \binom{2n-1}{2k-1} 2^{2k-1} E_{2k-1} = 1, \text{ for } 
n \geq 1.
\end{equation}
\end{note}

\section{Some arithmetic properties of the sequences $a_{n}$ and $b_{n}$} 
\label{S:arithmetic} 

Given a sequence of integers $\{ x_{n} \}$ it is often interesting to 
examine its arithmetic properties. For instance, given a prime $p$, this is 
measured by the $p$-adic valuation $\nu_{p}(x_{n})$, defined as the largest 
power of $p$ that divides $x_{n}$. Examples of this process appear in 
\cite{amdeberhan-2008b} for the Stirling numbers and in 
\cite{amdeberhan-2008a, moll-2010a} for a sequence of coefficients arising 
from a definite integral.

The statements described below give information about $\nu_{p}(a_{n})$. These
results will be presented in a future
publication. 
M. Lasalle \cite{lasallem-2012a} established the next theorem by showing 
that $A_{n}$ and $C_{n}$ have the same parity. The fact that the Catalan 
numbers are odd if and only if $n = 2^{r}-1$ for some $r \geq 2$ provides 
the proof. This result appears in \cite{egecioglu-1983a, koshy-2006a}. 

\begin{theorem}
\label{lasalle-odd}
The integer $a_{n}$ is odd if and only if $n = 2(2^{m}-1)$.
\end{theorem}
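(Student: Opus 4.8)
The plan is to reformulate the statement $2$-adically. Since $a_n = 2A_n/C_n$ is an integer, $\nu_2(a_n) = 1 + \nu_2(A_n) - \nu_2(C_n) \ge 0$, so $a_n$ is odd exactly when $\nu_2(A_n) = \nu_2(C_n) - 1$. Let $s(k)$ denote the number of ones in the binary expansion of $k$. Kummer's theorem gives $\nu_2\binom{2n}{n} = s(n)$, and combined with the identity $s(n+1) = s(n) + 1 - \nu_2(n+1)$ this yields
\begin{equation*}
\nu_2(C_n) = s(n+1) - 1 .
\end{equation*}
In particular $C_n$ is odd iff $s(n+1) = 1$, i.e. $n = 2^{r}-1$, which recovers the fact quoted above; and, writing $t_n := s(n+1) - 2$, the theorem becomes the assertion that
\begin{equation*}
\nu_2(A_n) = t_n \iff n+1 \text{ is a block of ones, i.e. } n = 2(2^{m}-1).
\end{equation*}
Integrality of $a_n$ already forces $\nu_2(A_n) \ge \max\{0, t_n\}$, so only the sharpness of this bound is at issue.

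I would prove the equivalence by strong induction on $n$ from Lasalle's recurrence \eqref{def-An},
\begin{equation*}
(-1)^{n-1} A_n = C_n + \sum_{j=1}^{n-1} (-1)^{j} \binom{2n-1}{2j-1} A_j C_{n-j} .
\end{equation*}
Because $\nu_2(C_n) = t_n + 1 > t_n$, the integer $A_n$ has valuation exactly $t_n$ if and only if the sum on the right does, so the task reduces to analysing the terms $\tau_j := (-1)^j \binom{2n-1}{2j-1} A_j C_{n-j}$ modulo $2^{t_n+1}$. Applying Kummer's theorem to $\binom{2n-1}{2j-1}$, using $\nu_2(C_{n-j}) = t_{n-j}+1$ exactly and the inductive information on $\nu_2(A_j)$, one shows $\nu_2(\tau_j) \ge t_n$ for every $j$ (a short Lucas/Kummer computation, delicate for the few $j$ where $A_j$ can be odd), with equality only for a set of \emph{critical} indices whose binary description is read off from the carry patterns of $j+1$, $n-j+1$ and $n+1$. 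The theorem then comes down to the combinatorial statement that the critical $\tau_j$ contribute an odd sum modulo $2$ precisely when $n+1$ is a string of ones; in every other case the critical indices are absent or cancel in pairs mod $2$, whence $\nu_2(A_n) > t_n$ and $a_n$ is even. The base range (say $n \le 6$, using $a_1,\dots,a_6 = 2,1,2,8,52,495$) is checked by hand.

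The main obstacle is exactly this last counting step: reduction mod $2$ annihilates nothing on the right of \eqref{def-An}, so the bound $\nu_2\bigl(\sum_j \tau_j\bigr) \ge t_n$ must be promoted to an identity by controlling the cancellations among critical terms, not merely bounding them from below. A further subtlety is that the naive inductive bound $\nu_2(A_j) \ge \max\{0, t_j\}$ is not tight for $j$ of the form $2^r - 1$ (where $A_j$ tends to be odd while $t_j = -1$) nor for certain powers of $2$, so the inductive invariant — and the Kummer bookkeeping for those few terms — has to be set up with care. A convenient way to organize the cancellation count is to pass to generating functions over $\mathbb{F}_2$, exploiting that squaring is the Frobenius on $\mathbb{F}_2[[x]]$: this linearizes the quadratic relation \eqref{rec-11} for the appropriately normalized residues, and the resulting functional equation has solution set exactly $\{\,n : n = 2(2^{m}-1)\,\}$. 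Carrying out that $\mathbb{F}_2$ computation, equivalently the Lucas/Kummer carry analysis above, is the one genuinely technical point; everything preceding it is routine.
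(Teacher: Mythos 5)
Your opening reduction is correct and worth keeping: since $a_n = 2A_n/C_n \in \mathbb{Z}$, Kummer's theorem gives $\nu_2(C_n) = s(n+1)-1$, and Theorem \ref{lasalle-odd} is indeed equivalent to the assertion that $\nu_2(A_n) = t_n := s(n+1)-2$ exactly when $n+1$ is a block of ones. From that point on, however, what you have is a plan rather than a proof. The two statements carrying all of the content --- (i) that every term $\tau_j = (-1)^j\binom{2n-1}{2j-1}A_jC_{n-j}$ in \eqref{def-An} satisfies $\nu_2(\tau_j) \ge t_n$, and (ii) that the ``critical'' terms attaining equality have odd sum at the level of $2^{t_n}$ precisely when $n+1$ is a string of ones, and otherwise cancel in pairs --- are asserted, not established. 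Statement (ii) is essentially the theorem itself in disguise: nothing in the proposal identifies the carry patterns defining the critical set, derives the promised functional equation over $\mathbb{F}_2$, or solves it, and the sentence ``the resulting functional equation has solution set exactly $\{\,n : n = 2(2^m-1)\,\}$'' is precisely what would need to be demonstrated. You acknowledge this (``the one genuinely technical point''), so as written there is a genuine gap: beyond the hand-checked base cases, neither implication of the equivalence is actually proved, and the inductive invariant itself (how much better than $\nu_2(A_j) \ge \max\{0,t_j\}$ one must carry along) is never fixed.

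For comparison, the paper does not attempt a self-contained proof of the full equivalence either: Theorem \ref{lasalle-odd} is quoted from Lasalle (who shows $A_n$ and $C_n$ have the same parity, combined with the classical fact that $C_n$ is odd iff $n = 2^r-1$), and the paper's own contribution in Section \ref{S:arithmetic} is an elementary proof of the ``if'' half only: $a_n$ is even for odd $n$ (Proposition \ref{even-1}), $\tfrac12 a_n$ is odd for $n = 2^m-1$ (Lemma \ref{lemma-odd1}, via \eqref{rel-newan} and Lucas' theorem), and then isolating the single term $j = n/2$ in \eqref{rel-newan} forces $a_n$ to be odd when $n = 2(2^m-1)$. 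That isolation trick is exactly what lets the paper avoid the global cancellation bookkeeping your plan runs into. To repair your argument you would either have to carry out the Kummer/Lucas carry analysis (or the $\mathbb{F}_2$ generating-function computation) in full detail, or scale the claim back to the ``if'' direction and argue along the paper's lines, citing Lasalle for the converse.
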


The previous statement may be expressed in terms of the sequence of binary 
digits of $n$. 

\begin{fact}
Let $B(n)$ be the binary digits of $n$ and denote $\bar{x}$ a sequence of 
a arbitrary length consisting of the repetitions of the symbol $x$. The 
following statements hold (experimentally)

\smallskip

\noindent
1) $\nu_{2}(a_{n}) = 0$ if and only if $B(n) = \{ \bar{1},0 \}$.

\smallskip

\noindent
2) $\nu_{2}(a_{n}) = 1$ if and only if $B(n) = \{ \bar{1}\}$ or $\{ 1, \bar{0} 
\}$.

\smallskip

\noindent
3) $\nu_{2}(a_{n}) = 2$ if and only if $B(n) = \{ 1, 0, \bar{1}, 0\}$. 

\end{fact}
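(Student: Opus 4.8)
The plan is to reduce all three statements to a single question: the $2$-adic valuation of the Lasalle numbers $A_n$. Since $a_n=2A_n/C_n$, and since the classical Kummer/Legendre formulas give $\nu_2(C_n)=s_2(n+1)-1$ (where $s_2(m)$ denotes the number of nonzero binary digits of $m$), one has
\begin{equation*}
\nu_2(a_n)=\nu_2(A_n)+2-s_2(n+1).
\end{equation*}
Hence statement~(1) (resp.\ (2), (3)) is equivalent to $\nu_2(A_n)=s_2(n+1)-2$ (resp.\ $s_2(n+1)-1$, $s_2(n+1)$). Part of this comes for free: Lasalle's congruence $A_n\equiv C_n\pmod 2$, together with the fact that $C_n$ is odd exactly when $n=2^r-1$, already gives $\nu_2(A_{2^r-1})=0$ and therefore $\nu_2(a_{2^r-1})=1$, which is the $B(n)=\{\bar{1}\}$ half of~(2); and statement~(1) is precisely Theorem~\ref{lasalle-odd} rewritten in binary-digit language. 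So the real content is the remaining cases of~(2) and all of~(3), i.e.\ pinning $\nu_2(A_n)$ to within one unit of $\nu_2(C_n)-1$ and deciding, in each regime, exactly which $n$ occur.

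For this I would refine Lasalle's parity computation to congruences modulo $4$ and then $8$, working directly from the defining recurrence~\eqref{def-An}. There Kummer's theorem governs $\nu_2\binom{2n-1}{2j-1}$ (it equals the number of carries in the base-$2$ addition $(2j-1)+(2n-2j)=2n-1$), while the formula $\nu_2(C_m)=s_2(m+1)-1$ governs the Catalan factors; for a fixed $n$ one then isolates the few index pairs $(j,n-j)$ for which $\binom{2n-1}{2j-1}A_jC_{n-j}$ can reach the minimal valuation and reads off $\nu_2(A_n)$ by an induction on $n$ organised along the binary expansion of $n$. The shape of the answers — $\nu_2(a_n)$ equal to a fixed value precisely when $B(n)$ matches a regular pattern built from a maximal block of $1$s and a bounded number of isolated $0$s — strongly suggests that $n\mapsto\min(\nu_2(a_n),K)$ is $2$-automatic for each fixed $K$, which would follow from a self-similar recursion expressing $\nu_2(a_n)$ in terms of the valuations at the indices obtained by deleting the last binary digit of $n$. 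Such a recursion can in principle be extracted by reducing modulo $2^{K+1}$ the generating-function identity
\begin{equation*}
\sum_{j=1}^{\infty}\frac{(-1)^{j-1}a_j}{(j+1)!}\,\frac{x^{2j-2}}{(j-1)!}=\frac{1}{x}\,\frac{d}{dx}\log\!\left(\frac{I_1(2x)}{2x}\right)
\end{equation*}
of~\eqref{nice-1}, or equivalently by working with the quadratic recurrence $2na_n=\sum_k\binom{n}{k-1}\binom{n}{k+1}a_ka_{n-k}$ of Proposition~\ref{recu-fora} in the same manner.

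A useful companion is the sister sequence $b_n=\tfrac{1}{2}a_n(0)$ from the arcsine case $\mu=0$: it is an integer satisfying the clean quadratic recurrence~\eqref{recu-b} and is linked to $a_n$ through $b_n=\tfrac{1}{2}\sum_j\binom{n-1}{j}\binom{n}{j-1}b_ja_{n-j}$. I would run the inductions for $\nu_2(a_n)$ and $\nu_2(b_n)$ simultaneously, since each recurrence then constrains the low-order $2$-adic digits of the other, and the patterns one predicts for $\nu_2(b_n)$ can be cross-checked against computed values; the same technique should, in passing, settle the analogous arithmetic statements for $b_n$.

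The main obstacle is structural rather than computational. The fact asserts exact valuations on three very thin families and, implicitly, the uniform lower bound $\nu_2(a_n)\geq 3$ for every $n$ not covered by~(1)--(3) — an infinite family of inequalities. Proving the latter forces the analysis of the recurrence to go one $2$-adic digit deeper than the valuation one is trying to pin down: when several terms $\binom{2n-1}{2j-1}A_jC_{n-j}$ (or $\binom{n}{k-1}\binom{n}{k+1}a_ka_{n-k}$) share the minimal valuation, one must show that their leading $2$-adic digits do not all cancel. That non-cancellation statement is the delicate combinatorial heart of the proof, and I expect it to require a dedicated lemma phrased in terms of the decomposition of $B(n)$ into maximal runs of $1$s and $0$s, counting how many index pairs contribute at each level and with which sign — the same mechanism that, in its degenerate mod-$2$ form, already yields Lasalle's parity result.
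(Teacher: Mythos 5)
First, a point of calibration: the paper itself does not prove this statement. It is labeled an ``Experimental Fact,'' the authors state explicitly that the results on $\nu_{p}(a_{n})$ ``will be presented in a future publication,'' and the only pieces actually proved in Section \ref{S:arithmetic} are the parity statements — Proposition \ref{even-1}, Lemma \ref{lemma-odd1} and the easier implication of Theorem \ref{lasalle-odd} — obtained from the recurrences \eqref{rec-11} and \eqref{def-an} together with Lucas' theorem. Your bookkeeping up to that level is correct and matches what the paper does have: the identity $\nu_{2}(a_{n})=\nu_{2}(A_{n})+2-s_{2}(n+1)$ follows from $a_{n}=2A_{n}/C_{n}$ and $\nu_{2}(C_{n})=s_{2}(n+1)-1$, statement (1) is exactly Theorem \ref{lasalle-odd} in binary language, and the $n=2^{r}-1$ half of (2) is Lemma \ref{lemma-odd1} (equivalently, Lasalle's congruence $A_{n}\equiv C_{n}\pmod 2$).

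Beyond that, however, your proposal is a research program rather than a proof, and the gap is exactly where you place it. The refinement of the recurrence \eqref{def-An} modulo $4$ and $8$, the ``self-similar recursion'' along the binary expansion, and the asserted $2$-automaticity of $n\mapsto\min(\nu_{2}(a_{n}),K)$ are all described but never carried out; in particular the non-cancellation lemma — showing that when several terms $\binom{2n-1}{2j-1}A_{j}C_{n-j}$ attain the minimal $2$-adic valuation their leading digits do not conspire to raise the valuation — is precisely what is needed both for the exact values in (2)–(3) and for the implicit uniform bound $\nu_{2}(a_{n})\geq 3$ off the three listed families, and you explicitly leave it as something you ``expect'' to need. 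Without that lemma none of the ``only if'' directions, nor any of statement (3), is established; Kummer's theorem alone only gives lower bounds on valuations of individual summands, never the exact valuation of the sum. So the proposal correctly identifies the structure of the problem and recovers the fragments the paper actually proves, but it does not close the statement, which the paper itself also leaves open.
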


The experimental findings for the prime $p=3$ are described next.

\begin{fact}
Suppose $n$ is not of the form $3^{m}-1$. Then
\begin{equation}
\nu_{3}(a_{3n-2}) = \nu_{3}(a_{3n-1}) = \nu_{3}(a_{3n}).
\end{equation}
\noindent
Define $w_{j} = 3^{j}-1$.  Suppose 
$n$ lies in the interval $w_{j}+1 \leq n \leq w_{j+1}-1$. Then 
\begin{equation}
\nu_{3}(a_{3n+2}) = j - \nu_{3}(n+1).
\end{equation}
\noindent
If $n = w_{j}$, then $\nu_{3}(a_{3n}) = 0.$

Now assume that $n = 3^{m}-1$. Then 
\begin{equation}
\nu_{3}(a_{3n}) = \nu_{3}(a_{3n-1}) - 1 = \nu_{3}(a_{3n-2}) - 1 = m.
\end{equation}
\end{fact}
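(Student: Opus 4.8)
Since the assertion concerns the $3$-adic valuation $\nu_{3}(a_{n})$, the natural machinery is the combination of an integer recurrence for $a_{n}$ with Kummer's theorem, which equates $\nu_{3}\binom{a+b}{a}$ with the number of carries occurring when $a$ and $b$ are added in base $3$. The plan is a strong induction on $n$ anchored on the quadratic recurrence
\begin{equation*}
2n\,a_{n} \;=\; \sum_{k=1}^{n-1} \binom{n}{k-1}\binom{n}{k+1}\,a_{k}\,a_{n-k}
\end{equation*}
of Proposition \ref{recu-fora}. Applying $\nu_{3}$ and using $\nu_{3}(2)=0$, the left side contributes $\nu_{3}(n)+\nu_{3}(a_{n})$, while the $k$-th summand on the right has valuation $\nu_{3}\binom{n}{k-1}+\nu_{3}\binom{n}{k+1}+\nu_{3}(a_{k})+\nu_{3}(a_{n-k})$; by the inductive hypothesis and Kummer's rule this becomes an explicit function of the base-$3$ digits of $n$ and $k$. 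The induction should be organized along the blocks $\{3q-2,\,3q-1,\,3q\}$ that appear in the statement: in base $3$ these indices read $\overline{(q-1)}\,1$, $\overline{(q-1)}\,2$ and $\overline{q}\,0$, so a single carry count covers an entire block, which is precisely what forces $\nu_{3}(a_{n})$ to be constant across a block in the generic range. Note also that $3n+2=3(n+1)-1$ is the middle index of the block attached to $n+1$, so the formula $\nu_{3}(a_{3n+2})=j-\nu_{3}(n+1)$ is exactly a formula for that common block value.

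The argument then splits according to whether the base-$3$ expansion of the relevant index is ``generic'' or has the exceptional all-twos shape $\overline{2\cdots2}$, i.e. $n=3^{m}-1$. In the generic case I would additionally invoke the recurrence \eqref{def-an} (equivalently \eqref{relation-1}, written through the integers $\sigma_{n,r}$ of \eqref{sigma-def}): there the isolated constant $2$ forces $\nu_{3}(a_{n})=0$ unless the remaining sum lands in the unique residue class modulo $3$ that is killed, and a digit count for $\nu_{3}(\sigma_{n,r})=\nu_{3}\binom{n}{r-1}+\nu_{3}\binom{n+1}{r+1}-\nu_{3}(n)$ shows that only a short, explicitly describable set of terms can reach that class; these terms then combine to give precisely the claimed value of $\nu_{3}(a_{3n+2})$, and hence, by block-constancy, the value of the surrounding block. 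For $n=3^{m}-1$ the all-twos expansion makes carries propagate maximally through the quadratic recurrence, and tracking the single extra carry produced by the dominant terms accounts for the unit shifts between $\nu_{3}(a_{3n})$ and $\nu_{3}(a_{3n\pm 1})$ and for the appearance of the extra summand $m$.

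The main obstacle is the control of cancellation: whenever several summands of the recurrence share the minimal $3$-adic valuation, one must verify that their leading base-$3$ digits do not cancel (in the generic case) or cancel only with a predictable carry (in the exceptional case). Carrying this out cleanly essentially forces one to prove, as a preliminary lemma, a sharpened congruence describing $a_{n}$ modulo a suitable power of $3$ in a neighborhood of the indices $3^{m}-1$ --- in effect upgrading the experimental observation to an exact local $3$-adic expansion --- and it is this congruence, rather than the surrounding bookkeeping, that is the genuinely delicate point; I would expect to discover it by first settling the cases $m=1,2,3$ by hand and reading off the stable pattern. An entirely parallel analysis, based on the recurrence \eqref{recu-b} for $b_{n}=\tfrac{1}{2}a_{n}(0)$, would presumably yield the analogous statements for the companion sequence $b_{n}$ of the same section.
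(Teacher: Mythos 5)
There is nothing in the paper to compare against: the statement you are proving is presented as an \emph{Experimental Fact}, and the authors say explicitly that these valuation results ``will be presented in a future publication.'' So the question is only whether your argument stands on its own, and as written it does not: it is a strategy outline whose decisive ingredient is named but never supplied. You acknowledge that one must first prove ``a sharpened congruence describing $a_{n}$ modulo a suitable power of $3$'' near the indices $3^{m}-1$ and that this is ``the genuinely delicate point'' to be discovered later from small cases; but that congruence \emph{is} the content of the Fact. Everything else in your sketch (Kummer's theorem, block indices $3q-2,3q-1,3q$, the observation that $3n+2$ is the middle of the block attached to $n+1$) is bookkeeping that by itself only yields lower bounds.

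Concretely, applying $\nu_{3}$ to \eqref{rec-11} gives $\nu_{3}(n)+\nu_{3}(a_{n})=\nu_{3}\bigl(\sum_{k}\binom{n}{k-1}\binom{n}{k+1}a_{k}a_{n-k}\bigr)$, and the right-hand side equals the minimal term valuation only when that minimum is attained by a single term; here the summand is symmetric under $k\leftrightarrow n-k$, so equal-valuation collisions are the rule rather than the exception, and the valuation of the sum can exceed the minimum. Worse, whenever $3\mid n$ (a third of all indices, including the exceptional $n$ with $\nu_{3}(n+1)$ large that drive the formula $\nu_{3}(a_{3n+2})=j-\nu_{3}(n+1)$), the extra $\nu_{3}(n)$ on the left \emph{forces} cancellation among the minimal-valuation terms, so the phenomenon you hope to sidestep in the ``generic'' case must in fact be proved there too. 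The same objection applies to your use of \eqref{def-an}/\eqref{relation-1}: the isolated constant $2$ only gives $\nu_{3}(a_{n})=0$ when you can show the remaining sum is $\not\equiv -2 \pmod 3$ (and, for the refined claims, pin down its residue modulo higher powers of $3$), which again requires exact control of leading $3$-adic digits of the $a_{j}$, i.e.\ the missing congruence lemma. Until that lemma is formulated and proved --- for instance as an explicit statement of $a_{n}\bmod 3^{N}$ in terms of the base-$3$ digits of $n$, established by the same induction --- the proposal is a plausible plan, not a proof.
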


\begin{fact}
The last observation deals with the sequence $\{ a_{n}(\mu) \}$. 
Consider it now as defined by the recurrence \eqref{gen-recu}. The initial 
condition $a_{1}(\mu) = 2$, motivated by the origin of the sequence, in 
general does not provide integer entries.  For example, if $\mu = 2$, the 
sequence is 
\begin{equation*}
\left\{ 2, \, \frac{2}{3}, \, \frac{8}{9}, \, \frac{7}{3}, \, \frac{88}{9}, 
\, \frac{1594}{27}, \, \frac{1448}{3} \right\},
\end{equation*}
\noindent
and for $\mu =3$
\begin{equation*}
\left\{ 2, \, \frac{1}{2}, \, \frac{1}{2}, \, \frac{39}{40}, \, 3, 
\, \frac{263}{20}, \, \frac{309}{4} \right\}.
\end{equation*}
\noindent
Observe that the denominators of the sequence for $\mu = 2$ are always 
powers of $3$, but for $\mu = 3$ the arithmetic nature of the denominators 
is harder to predict. On the other hand if in the case $\mu = 3$ the 
initial condition is 
replaced by $a_{1}(3) = 4$, then the resulting sequence has denominators that 
are powers of $5$. This motivates the next definition.

\begin{definition}
Let $x_{n}$ be a sequence of rational numbers and $p$ be a prime. The 
sequence is called $p$-integral if the denominator of $x_{n}$ is a power of 
$p$. 
\end{definition}

Therefore if $a_{1}(3) = 4$, then the sequence $a_{n}(3)$ is $5$-integral. 
The same phenomena appeas for other values of $\mu$, the data is summarized 
in the next table.

\medskip

\begin{center}
\begin{tabular}{||c||c|c|c|c|c|c|c||}
\hline
$\mu$ & 2 & 3 &  4 & 5 & 6 & 7 & 8   \\
\hline 
$a_{1}(\mu)$ & 2 & 4 & 10 & 12 & 84 & 264 & 990 \\
\hline
$p$ & 3 & 5 & 7  & 7 & 11 & 11 & 13 \\
\hline
\end{tabular}
\end{center}

\begin{note}
The sequence $\{2, \, 4, \, 10, \, 12, \, 84, \, 264, \, 990 \}$ does 
not appear in Sloane's sequences list OEIS.
\end{note}

\medskip

This suggests the next conjecture.

\begin{conjecture}
Let $\mu \in \mathbb{N}$. Then there exists an initial condition 
$a_{1}(\mu)$ and a prime $p$ such that the sequence $a_{n}(\mu)$ 
is $p$-integral.
\end{conjecture}
\end{fact}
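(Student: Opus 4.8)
The plan is to reduce the conjecture to a $p$-adic denominator estimate for the Rayleigh function $\zeta_\mu(2n)$. Note first that the recurrence \eqref{gen-recu} is invariant under $a_n(\mu)\mapsto\lambda^n a_n(\mu)$, since the left-hand side and every summand on the right both acquire the factor $\lambda^n$. Hence the solution of \eqref{gen-recu} with initial value $a_1(\mu)=t$ equals $t^n c_n(\mu)$, where $c_n(\mu)$ is the unique solution with $c_1(\mu)=1$. Therefore $\{a_n(\mu)\}$ with $a_1(\mu)=t$ is $p$-integral exactly when $v_q(c_n(\mu))\ge -n\,v_q(t)$ for every prime $q\neq p$ and every $n$. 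Since $t$ is an integer with finitely many prime divisors, the conjecture is equivalent to the intrinsic assertion: there is one prime $p$ such that the remaining primes occurring in the denominators of $\{c_n(\mu)\}$ form a finite set $S$, and $v_q(c_n(\mu))\ge-\gamma_q n$ for a constant $\gamma_q$ whenever $q\in S$; one then takes $t=\prod_{q\in S}q^{\lceil\gamma_q\rceil}$.

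To get a handle on $c_n(\mu)$, I would use the closed form. Specializing \eqref{anmu-form1} to $a_1(\mu)=2$ and dividing by $2^n$ gives
\[
c_n(\mu)=2^{\,n+1}\,(n-1)!\,(\mu+1)_n\,\zeta_\mu(2n),
\]
so everything is controlled by the arithmetic of the Bessel zeta values $\zeta_\mu(2n)$ at the integer $\mu$. The tool is the convolution recurrence \eqref{recu-zeta1}, together with $\zeta_\mu(2)=\tfrac1{4(\mu+1)}$. Induction on $n$ with \eqref{recu-zeta1} yields a denominator of the form $\operatorname{den}(\zeta_\mu(2n))\mid 2^{\alpha_n}\prod_{j\ge1}(\mu+j)^{e_{j,n}}$ with explicit exponents; multiplying back the numerator factor $2^{n+1}(n-1)!\,(\mu+1)_n$ then shows that the only primes that can survive in $\operatorname{den}(c_n(\mu))$ are $2$ and the primes $q\mid\mu+j$ for which $e_{j,n}$ outruns the single factor of $\mu+j$ supplied by $(\mu+1)_n$. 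The exceptional prime $p=p(\mu)$ should be the one small prime for which this excess is unbounded; the data ($\mu=2\mapsto3$, $\mu=3\mapsto5$, $\mu=4,5\mapsto7$, $\mu=6,7\mapsto11$, $\mu=8\mapsto13$) is consistent with this picture.

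The hard part is proving that, for every prime $q\ne p$, the excess exponent in $\operatorname{den}(c_n(\mu))$ grows only linearly in $n$. The estimate that drops out of \eqref{recu-zeta1} directly is far too weak: it lets the denominator exponent of a prime $q\mid\mu+j$ grow like $n^2/q$, since each of the $\asymp n/q$ indices $j$ with $q\mid\mu+j$ contributes a factor whose exponent is itself linear in $n$, and a fixed $t$ cannot absorb quadratic growth. So the crux is to show that this apparent growth cancels for all $q$ but one --- equivalently, that the numerator polynomial of the rational function $\zeta_\mu(2n)$ is divisible by high powers of $(\nu+j)$, or, working straight from \eqref{gen-recu}, that the binomial quotient $\binom{n+\mu-1}{n-k-1}\binom{n+\mu-1}{k-1}/\binom{n+\mu-1}{n-1}$ is far more $q$-integral than its factored form suggests (a Kummer-carry phenomenon). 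I expect the decisive input to be a sharp closed form, or a generating-function identity, for the denominators of the Rayleigh functions $\sigma_{2n}(\nu)$, in the spirit of the work of Kishore and of \cite{elizalde-1993a}; given such an identity, the induction above should close and produce the required linear bounds for $q\ne p$, completing the proof.
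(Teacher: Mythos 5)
There is nothing to compare your proposal against: the item you were asked to prove is labeled an \emph{Experimental Fact} in the paper, and the paper offers no proof of it at all --- the displayed sequences for $\mu=2,3$, the table of pairs $(a_{1}(\mu),p)$, and the closing statement are presented purely as numerical data, and the final assertion is explicitly an open \emph{Conjecture}. So the honest benchmark is whether your argument actually establishes the conjecture, and it does not. Your preliminary reductions are correct: the recurrence \eqref{gen-recu} is homogeneous of degree one in each $a_{k}(\mu)$, so the solution with $a_{1}(\mu)=t$ is $t^{n}c_{n}(\mu)$ with $c_{1}(\mu)=1$, and by \eqref{anmu-form1} one has $c_{n}(\mu)=2^{\,n+1}(n-1)!\,(\mu+1)_{n}\,\zeta_{\mu}(2n)$; consequently $p$-integrality for some integer initial value is equivalent to the statement that the denominators of $c_{n}(\mu)$ involve, besides one exceptional prime $p$, only a fixed finite set of primes whose exponents grow at most linearly in $n$.

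But that equivalent statement is precisely the content of the conjecture, and your proposal leaves it unproved. As you yourself note, the only tool you bring to bear --- induction on the convolution \eqref{recu-zeta1} starting from $\zeta_{\mu}(2)=\tfrac{1}{4(\mu+1)}$ --- yields denominator exponents at a prime $q\mid \mu+j$ that can grow quadratically in $n$, which no fixed choice of $a_{1}(\mu)$ can absorb; the needed cancellation (that this growth collapses to linear for every prime except one, a Kummer-carry or Rayleigh-denominator phenomenon) is exactly the missing idea, and you only express the expectation that a ``sharp closed form'' for the denominators of the Rayleigh functions would supply it. You also do not identify the exceptional prime $p(\mu)$, nor prove that the residual prime set $S$ is finite, nor treat the prime $2$. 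In short, the proposal is a reasonable reformulation and reduction of the conjecture, consistent with the paper's data, but it contains no proof of the decisive estimate, so the conjecture remains open under your approach just as it does in the paper.
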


\medskip

Some elementary arithmetical properties of $a_{n}$ are discussed next. 
A classical result of E. Lucas states that a prime $p$ divides the binomial 
coefficient $\binom{a}{b}$ if and only if at least one of the base $p$ 
digits of $b$ is greater than the corresponding digit of $a$. 

\smallskip

\begin{prop}
\label{even-1}
Assume $n$ is odd. Then $a_{n}$ is even.
\end{prop}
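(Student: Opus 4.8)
The plan is to exploit the recurrence \eqref{rec-11} from Proposition \ref{recu-fora}, namely
\begin{equation*}
2na_{n} = \sum_{k=1}^{n-1} \binom{n}{k-1} \binom{n}{k+1} a_{k} a_{n-k},
\end{equation*}
together with the parity data already available. Since $n$ is odd, every index pair $(k, n-k)$ in the sum consists of one odd and one even integer, so by the earlier corollary (``$a_{n}$ is even if $n$ is odd'') at least one of $a_{k}, a_{n-k}$ carries a factor of $2$, whence $2 \mid a_{k}a_{n-k}$ and in fact $4 \mid 2na_{n}$. With $n$ odd this only gives $2 \mid a_{n}$ directly if we track the powers of $2$ a little more carefully; the cleanest route is simply to invoke the already-proved statement $a_{n}$ is even for $n$ odd as a special case, but since the paper evidently wants an independent (Lucas-based) argument here, I would instead argue via \eqref{relation-1a}.

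\textbf{Main line of argument.} Write \eqref{relation-1a} in the form
\begin{equation*}
a_{n} = (-1)^{n-1}\left[ 2 + \sum_{r=1}^{n-1} \frac{\sigma_{n,r}}{2} a_{r} \right], \qquad \sigma_{n,r} = \frac{2}{n}\binom{n}{r-1}\binom{n+1}{r+1}.
\end{equation*}
With $n = 2m+1$ odd, I would show each summand $\tfrac{1}{2}\sigma_{n,r} a_{r}$ is even. First, $\tfrac{1}{2}\sigma_{n,r} = \tfrac{1}{n}\binom{n}{r-1}\binom{n+1}{r+1}$ is an integer (this is recorded in the excerpt). Next split on the parity of $r$: if $r$ is even, then $a_{r}$ is odd in general but $\tfrac12\sigma_{n,r}$ will be shown even; if $r$ is odd, then $a_{r}$ is even by the induction hypothesis and we are done. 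For the even-$r$ case, apply Lucas's theorem to the binomial coefficients: since $n = 2m+1$ has last binary digit $1$ while $r-1$ and $r+1$ are odd $\pm$ shifts, one checks that $\binom{n}{r-1}$ or $\binom{n+1}{r+1}$ acquires the needed factor of $2$ by comparing the binary digit in the units place (for even $r$, $r+1$ is odd, $n+1 = 2m+2$ is even, so the units digit of $r+1$ exceeds that of $n+1$, forcing $2 \mid \binom{n+1}{r+1}$). Combining, $2 \mid \tfrac12\sigma_{n,r} a_r$ for every $r$, so $a_{n} \equiv \pm 2 \equiv 0 \pmod 2$.

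\textbf{Expected obstacle.} The delicate point is the even-$r$ case: one must be careful that the factor of $2$ extracted from the binomial coefficient is not cancelled, i.e. that $\tfrac{1}{n}\binom{n}{r-1}\binom{n+1}{r+1}$ remains even after dividing by the odd number $n$. Since $n$ is odd this division does not touch the power of $2$, so the argument is safe, but writing it cleanly requires stating the Lucas/units-digit comparison precisely. An alternative, perhaps simpler, finish is to note this proposition is literally the ``$n$ odd'' half of the earlier corollary and just cite it; but presented as a standalone consequence of Lucas's theorem, the units-digit bookkeeping on $\binom{n+1}{r+1}$ for even $r$ is the one step to get right.
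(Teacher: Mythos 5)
Your main argument is correct, but it follows a genuinely different route from the paper's. The paper proves this proposition by induction directly from the quadratic recurrence \eqref{rec-11}: writing $n=2m+1$, the symmetry $k \leftrightarrow n-k$ of the summand, together with the absence of a middle term because $n$ is odd, gives $2(2m+1)a_{2m+1} = 2\sum_{k=1}^{m}\binom{2m+1}{k-1}\binom{2m+1}{k+1}a_{k}a_{2m+1-k}$; in each product one of the indices $k$, $2m+1-k$ is odd, so by induction one factor is even, hence $(2m+1)a_{2m+1}$ is even and, $2m+1$ being odd, so is $a_{2m+1}$. This half-range pairing is precisely what your abandoned first paragraph was missing: per-term evenness alone only yields the trivial $2 \mid 2na_{n}$, and your unsupported claim that $4 \mid 2na_{n}$ is exactly the point the symmetric rewriting supplies. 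Your actual proof instead runs through the linear relation \eqref{relation-1a}: for odd $n$ the integer $\tfrac{1}{2}\sigma_{n,r}=\tfrac{1}{n}\binom{n}{r-1}\binom{n+1}{r+1}$ is even when $r$ is even, because the units binary digit comparison in Lucas's theorem forces $2\mid\binom{n+1}{r+1}$ and division by the odd number $n$ does not affect the power of $2$, while for odd $r$ the inductive hypothesis makes $a_{r}$ even; hence $a_{n}=2+(\text{even})$ is even. That is sound, and it has the flavor of the argument the paper saves for Lemma \ref{lemma-odd1}, where Lucas's theorem is genuinely needed to get $4\mid\binom{n+1}{j+1}$; by contrast, the paper's own proof of this proposition needs no Lucas at all, only Proposition \ref{recu-fora} and its symmetry, and is essentially a repetition of the parity induction already given in Section \ref{S:positivity} rather than the independent Lucas-based argument you guessed it wanted.
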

\begin{proof}
Let $n = 2m+1$. The recurrence \eqref{rec-11} gives 
\begin{eqnarray*}
2(2m+1)a_{2m+1} & = & \sum_{k=1}^{2m} \binom{2m+1}{k-1} \binom{2m+1}{k+1} 
a_{k} a_{2m+1-k} \\
& = & 2 \sum_{k=1}^{m} \binom{2m+1}{k-1} \binom{2m+1}{k+1} 
a_{k} a_{2m+1-k}.
\end{eqnarray*}
\noindent
For $k$ in the range $1 \leq k \leq m$, one of the indices $k$ or $2m+1-k$ is 
odd. The induction argument shows that for each such $k$, either $a_{k}$ 
or $a_{2m+1-k}$ is an even 
integer. This completes the argument.
\end{proof}

\begin{lemma}
\label{lemma-odd1}
Assume $n = 2^{m}-1$. Then $\tfrac{1}{2}a_{n}$ is an odd integer.
\end{lemma}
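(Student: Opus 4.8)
The plan is to transfer the question to the integers $A_n$ and then read the $2$-adic valuation straight off the defining recurrence \eqref{def-An} reduced modulo $2$; the hypothesis $n = 2^m-1$ will enter only through the classical fact that $C_{2^m-1}$ is odd.

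\smallskip

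First I would record the reduction. Since $a_n = 2A_n/C_n$ is an integer and, for $n = 2^m-1$, the Catalan number $C_n$ is odd (this is the same classical fact --- $C_k$ odd iff $k = 2^r-1$ --- used to prove Theorem \ref{lasalle-odd}), one gets
\[
\nu_2(a_n) \;=\; \nu_2\!\left(\tfrac{2A_n}{C_n}\right) \;=\; 1 + \nu_2(A_n),
\]
so $\tfrac12 a_n$ is an odd integer precisely when $A_n$ is odd. The case $m=1$ is immediate ($a_1 = 2$), so it remains to prove that $A_n$ is odd for $n = 2^m-1$, $m \ge 2$.

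\smallskip

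For this I would reduce \eqref{def-An} modulo $2$ to
\[
A_n \;\equiv\; C_n + \sum_{j=1}^{n-1} \binom{2n-1}{2j-1} A_j\, C_{n-j} \pmod 2 ,
\]
and show every term of the sum is even. If $j$ is odd then $n-j$ is a positive even integer (because $n$ is odd), hence $C_{n-j}$ is even. If $j$ is even then $2j-1 \equiv 3 \pmod 4$, whereas $2n-1 = 2^{m+1}-3 \equiv 1 \pmod 4$; thus the $2$'s-place binary digit of $2j-1$ is $1$ while that of $2n-1$ is $0$, so Lucas' theorem (equivalently Kummer's) gives $\binom{2n-1}{2j-1} \equiv 0 \pmod 2$. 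In either case the term vanishes mod $2$, leaving $A_n \equiv C_n \equiv 1 \pmod 2$, which is what we needed.

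\smallskip

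The two digit/parity checks in the last step are routine, so there is essentially no real obstacle; the single point to be careful about is that the entire argument hinges on $C_n$ being odd and on the specific binary shape of $2n-1 = 2^{m+1}-3$, which is exactly where the assumption $n = 2^m-1$ is used. The outcome is consistent with the experimental value $\nu_2(a_n) = 1$ recorded for $n$ with $B(n) = \{\bar 1\}$ in the discussion following Theorem \ref{lasalle-odd}.
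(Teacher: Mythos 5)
Your proof is correct, but it follows a genuinely different route from the paper's. You transfer the question to $A_{n}$ via $a_{n}=2A_{n}/C_{n}$ (using the integrality of $a_{n}$, proved in Section \ref{S:positivity}, and the classical fact that $C_{k}$ is odd exactly when $k=2^{r}-1$), and then reduce the defining recurrence \eqref{def-An} modulo $2$: for $j$ odd the factor $C_{n-j}$ is even because $n-j$ is a positive even integer, while for $j$ even Lucas' theorem kills $\binom{2n-1}{2j-1}$ since $2n-1=2^{m+1}-3\equiv 1\pmod 4$ but $2j-1\equiv 3\pmod 4$; hence $A_{n}\equiv C_{n}\equiv 1\pmod 2$. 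In effect you verify, for $n=2^{m}-1$, the congruence $A_{n}\equiv C_{n}\pmod 2$ that underlies Lasalle's parity theorem (Theorem \ref{lasalle-odd}). The paper instead never leaves the $a_{n}$ recurrence: it rewrites \eqref{def-an} as \eqref{rel-newan}, multiplies through by $n$, and shows each term $\binom{n}{j-1}\binom{n+1}{j+1}a_{j}$ is divisible by $4$ --- for $j$ odd because $a_{j}$ is even (Proposition \ref{even-1}) and $\binom{2^{m}}{j+1}$ is even, for $j$ even because Lucas gives $4\mid\binom{2^{m}}{j+1}$ --- so that $n\bigl[(-1)^{n-1}\tfrac12 a_{n}-1\bigr]$ is even and oddness of $\tfrac12 a_{n}$ follows. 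Your version is shorter and leans on the external Catalan parity fact (already cited in the paper) plus the integrality of $a_{n}$ and $A_{n}$; the paper's version is self-contained within the $a_{n}$ framework, needs no Catalan parity, and is set up so that the same identity \eqref{rel-newan} and divisibility bookkeeping carry over directly to the subsequent theorem on $n=2(2^{m}-1)$. Both arguments are sound; just make sure, if your version were adopted, to cite the Section \ref{S:positivity} corollary explicitly for $a_{n}\in\mathbb{Z}$ so the step $\nu_{2}(a_{n})=1+\nu_{2}(A_{n})$ is justified and non-circular.
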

\begin{proof}
Proposition \ref{even-1} shows that $\tfrac{1}{2}a_{n}$ is an integer. The 
relation \eqref{def-an} may be written as 
\begin{equation}
(-1)^{n-1}a_{n} = 2 + \frac{1}{n} \sum_{j=1}^{n-1} (-1)^{j} 
\binom{n}{j-1} \binom{n+1}{j+1} a_{j}.
\label{rel-newan}
\end{equation}
\noindent
This implies 
\begin{equation}
n \left[ (-1)^{n-1} \tfrac{1}{2}a_{n}- 1 \right] = 
\frac{1}{2} \sum_{j=1}^{n-1} (-1)^{j} \binom{n}{j-1} \binom{n+1}{j+1} a_{j}.
\end{equation}
\noindent
Observe that if $j$ is odd, then $a_{j}$ is even and $\binom{n+1}{j+1}$ 
is also even. Therefore the corresponding term in the sum is divisible by $4$.
If $j$ is even, then Lucas's theorem shows that 
$4$ divides $\binom{n+1}{j+1}$. It follows that the 
right hand side is an even number. This implies that $\tfrac{1}{2}a_{n}$ is 
odd, as claimed.
\end{proof}

The next statement, which provides the easier part of 
Theorem \ref{lasalle-odd}, describes the indices that 
produce odd values of $a_{n}$.

\begin{theorem}
If $n = 2(2^{m}-1)$, then $a_{n}$ is odd.
\end{theorem}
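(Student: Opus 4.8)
The plan is to evaluate the quadratic recurrence \eqref{rec-11} at the even index $N:=2(2^{m}-1)=2^{m+1}-2$ and to track the $2$-adic valuation $\nu_{2}$ of both sides. Since $2^{m}-1$ is odd we have $\nu_{2}(N)=1$ and $\nu_{2}(2N)=2$, so the assertion that $a_{N}$ is odd is equivalent to showing that
\[
2Na_{N}=\sum_{k=1}^{N-1}\binom{N}{k-1}\binom{N}{k+1}a_{k}a_{N-k}
\]
has $2$-adic valuation exactly $2$. The summand is invariant under $k\mapsto N-k$, so the right-hand side equals $2P+M$, where $M$ is the central term (at $k=N/2=2^{m}-1$) and $P=\sum_{k=1}^{N/2-1}\binom{N}{k-1}\binom{N}{k+1}a_{k}a_{N-k}$. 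Thus it suffices to prove $\nu_{2}(M)=2$ and $\nu_{2}(2P)\geq 3$, for then $\nu_{2}(2P+M)=2$ with no cancellation.

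First I would compute $\nu_{2}(M)$. Writing $N$ in binary as $(\underbrace{1\cdots1}_{m}0)_{2}$, Lucas's theorem (quoted above) shows that for $0\leq j\leq N$ the binomial coefficient $\binom{N}{j}$ is odd precisely when $j$ is even. In particular $\binom{N}{N/2-1}=\binom{N}{2^{m}-2}$ and $\binom{N}{N/2+1}=\binom{N}{2^{m}}$ are both odd. By Lemma \ref{lemma-odd1} applied to $2^{m}-1$, we have $a_{2^{m}-1}=2u$ with $u$ odd, so $a_{N/2}^{2}=4u^{2}$ and hence $\nu_{2}(M)=0+0+2=2$.

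Next I would show $4\mid P$, i.e.\ $\nu_{2}(2P)\geq 3$, term by term for $1\leq k\leq N/2-1$. If $k$ is odd then $N-k$ is odd as well, so Proposition \ref{even-1} gives that $a_{k}$ and $a_{N-k}$ are both even, whence $4\mid a_{k}a_{N-k}$. If $k$ is even then $k-1$ and $k+1$ are both odd, so by the parity criterion for $\binom{N}{j}$ just established, $\binom{N}{k-1}$ and $\binom{N}{k+1}$ are both even, whence $4\mid\binom{N}{k-1}\binom{N}{k+1}$. Either way the $k$-th term of $P$ is divisible by $4$ (and $P$ is empty when $m=1$, a case handled directly since then $2Na_{N}=M=4$).

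Combining these, $\nu_{2}(2P+M)=2$, and comparing with $\nu_{2}(2N)=2$ forces $\nu_{2}(a_{N})=0$; that is, $a_{N}$ is odd. The two Lucas computations and the case split are routine; the only delicate point — and the one carrying the real content — is that the central term $M$ has $2$-adic valuation exactly $2$, which is what makes it the dominant (non-cancelling) contribution to the sum. I expect the main obstacle to be little more than organizing the binary-digit bookkeeping for $\binom{N}{j}$ cleanly and checking the boundary case $m=1$.
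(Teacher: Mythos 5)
Your argument is correct, and it reaches the conclusion by a genuinely different route than the paper. You work with the quadratic convolution recurrence \eqref{rec-11}, exploit the symmetry $k \mapsto N-k$ to write the right-hand side as $2P+M$, and run a $2$-adic valuation count: the central term $M=\binom{N}{N/2-1}\binom{N}{N/2+1}a_{N/2}^{2}$ has $\nu_{2}(M)=2$ exactly, while every off-central term is divisible by $4$, so $\nu_{2}(2Na_{N})=2$ and hence $\nu_{2}(a_{N})=0$. The paper instead isolates the central index $j=n/2$ in the linear recurrence \eqref{rel-newan} (a rewriting of \eqref{def-an}) and argues purely mod $2$: the central contribution $\binom{2^{m+1}-2}{2^{m}-2}\binom{2^{m+1}-1}{2^{m}}\tfrac{1}{2}a_{n/2}$ is odd while all the remaining halved terms are even, which forces $a_{n}$ to be odd. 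Both proofs rest on exactly the same three ingredients — Lucas's theorem, Lemma \ref{lemma-odd1} for $a_{2^{m}-1}$, and Proposition \ref{even-1} — and the same mechanism that only the middle term survives modulo the relevant power of $2$. What your version buys is a single clean Lucas criterion ($\binom{N}{j}$ odd iff $j$ even, since $N=(1\cdots10)_{2}$) and the halving of the sum by symmetry, at the cost of tracking valuations $2$ versus $\geq 3$ rather than bare parity; the paper's version avoids squares of the $a$'s but must handle the factor $1/n$ and the alternating signs. One small point worth making explicit in your write-up: your $m=1$ base case $2Na_{N}=M=4$ uses the true initial value $a_{1}=2$ (as listed in the introduction and used in the paper's monotonicity proof), not the $a_{1}=1$ misprinted as the initial condition in Proposition \ref{recu-fora}; with $a_{1}=2$ everything checks out and your induction-free valuation argument is complete.
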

\begin{proof}
Isolate the term $j = n/2$ in the identity \eqref{rel-newan} to produce 
\begin{eqnarray*}
\left[ (-1)^{n} a_{n} + 2 \right] (2^{m}-1) & = & 
\binom{2^{m+1}-2}{2^{m}-2} \binom{2^{m+1}-1}{2^{m}} \, 
\tfrac{1}{2}a_{n/2} \\
& + & \frac{1}{2} \sum_{j \neq n/2} (-1)^{j} \binom{n}{j-1} \binom{n+1}{j+1} 
a_{j}. 
\end{eqnarray*}
\noindent
Lemma \ref{lemma-odd1} shows that $\frac{1}{2}a_{n/2}$ is odd and the 
binomial coefficients on the first term of the right-hand side are also odd
by Lucas' theorem. Each term of the sum is even because $a_{j}$ is even if 
$j$ is odd and for $j$ even $\binom{n}{j-1}$ is even. Therefore the entire 
right-hand side is even which forces $a_{n}$ to be odd.
\end{proof}

The final result discussed here deals with the parity of the sequence 
$b_{n}$. The main tool is the recurrence 
\begin{equation}
b_{n} = \sum_{k=1}^{n-1} \binom{n-1}{k} \binom{n-1}{k-1} b_{k}b_{n-k}
\end{equation}
\noindent
with $b_{1}=1$.  Observe that the binomial coefficients appearing in 
this recurrence are related to the Narayana numbers $N(n,k)$ 
\eqref{nara-numb-def}  by 
\begin{equation}
\binom{n-1}{k} \binom{n-1}{k-1} = (n-1)N(n-1,k-1).
\end{equation}
\noindent
Arithmetic properties of the Narayana numbers have been discussed by 
M. Bona and B. Sagan \cite{bona-2005a}. It is established that if $n = 
2^{m}-1$ then $N(n,k)$ is odd for $0 \leq k \leq n-1$; while if $n = 2^{m}$
then $N(n,k)$ is even for $1 \leq k \leq n-2$.  

\smallskip

The next theorem is the analog of M. Lasalle's result for the 
sequence $b_{n}$.

\begin{theorem}
The coefficient $b_{n}$ is an odd integer if and only if $n = 2^{m}$, for 
some $m \geq 0$.
\end{theorem}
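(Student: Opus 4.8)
The plan is to prove the biconditional by strong induction on $n$, working throughout modulo $2$. Since $b_{n}$ is already known to be a positive integer, only its parity must be controlled. The base case $n=1$ is immediate: $b_{1}=1$ is odd and $1=2^{0}$. For the inductive step, reduce the recurrence \eqref{recu-b} modulo $2$,
\begin{equation*}
b_{n}\equiv\sum_{k=1}^{n-1}\binom{n-1}{k}\binom{n-1}{k-1}\,b_{k}\,b_{n-k}\pmod{2},
\end{equation*}
and ask which terms survive. By the induction hypothesis, $b_{k}b_{n-k}$ is odd exactly when both $k$ and $n-k$ are powers of $2$; and by Lucas' theorem the coefficient $\binom{n-1}{k}\binom{n-1}{k-1}$ is odd exactly when $k$ and $k-1$ are both submasks of $n-1$ in base $2$. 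So a surviving term forces $k=2^{i}$, $n-k=2^{j}$, hence $n=2^{i}+2^{j}$, together with the two binomial coefficients being odd.

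I would first handle the case $n=2^{m}$ with $m\ge 1$. Here $n-1=2^{m}-1$ is a block of $m$ ones in binary, so by Lucas' theorem $\binom{n-1}{j}$ is odd for every $0\le j\le n-1$; consequently every coefficient in the mod-$2$ sum is odd and $b_{n}\equiv\sum_{k=1}^{n-1}b_{k}b_{n-k}\pmod 2$. Pairing the index $k$ with $n-k$ (legitimate since $n$ is even), all terms with $k\neq n/2$ cancel in pairs, leaving $b_{n}\equiv b_{n/2}^{2}\equiv b_{n/2}\pmod 2$. As $n/2=2^{m-1}$ is again a power of $2$, the induction hypothesis gives that $b_{n/2}$ is odd, and therefore $b_{n}$ is odd. (The same step can be phrased through the quoted Narayana identity $\binom{n-1}{k}\binom{n-1}{k-1}=(n-1)N(n-1,k-1)$ and the Bona--Sagan fact that $N(2^{m}-1,\,\cdot\,)$ is all odd.)

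For the remaining case, where $n$ is not a power of $2$, I would show every term of the mod-$2$ sum vanishes, whence $b_{n}$ is even. Suppose a term with index $k$ were odd. Then $k=2^{i}$ and $n-k=2^{j}$, so $n=2^{i}+2^{j}$ with $i\neq j$ (otherwise $n=2^{i+1}$ would be a power of $2$); take $i<j$ without loss of generality. Then $n-1=2^{j}+(2^{i}-1)$ has its bit in position $i$ equal to $0$, so $\binom{n-1}{2^{i}}=\binom{n-1}{k}$ is even by Lucas' theorem, contradicting that the coefficient is odd. Hence $b_{n}\equiv 0\pmod 2$. This closes the induction, and it simultaneously covers the odd indices $n\ge 3$ as a special instance.

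The bookkeeping with Lucas' theorem and the pairing cancellation are routine; the one point requiring care — and the place where the hypothesis "$n$ is not a power of $2$" is genuinely used — is the last step, i.e. verifying that writing $n=2^{i}+2^{j}$ with $i\neq j$ forces $\binom{n-1}{k}\binom{n-1}{k-1}$ to be even. Everything else is a clean consequence of reducing the quadratic recurrence modulo $2$.
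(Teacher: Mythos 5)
Your proof is correct, and it shares the paper's skeleton (strong induction on $n$, reduction of the quadratic recurrence modulo $2$, Lucas' theorem), but the case analysis is genuinely different. For $n=2^{m}$ the paper also uses Lucas to see that every coefficient $\binom{2^{m}-1}{k}\binom{2^{m}-1}{k-1}$ is odd, but it then eliminates the non-middle terms through the induction hypothesis (one of $b_{k}$, $b_{n-k}$ is even unless $k=2^{m-1}$), whereas you eliminate them by the pairing $k\leftrightarrow n-k$, which needs no hypothesis at all --- a small simplification. For the converse direction the paper splits into two cases: $n$ odd is dispatched at once from the factor $n-1$ in $\binom{n-1}{k}\binom{n-1}{k-1}=(n-1)N(n-1,k-1)$, and $n=2j$ with $j$ not a power of $2$ is handled by the same pairing together with the evenness of the middle term $b_{j}^{2}$; no bitwise analysis of $n$ is required. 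You instead treat all non-powers of $2$ uniformly: a surviving term forces $n=2^{i}+2^{j}$ with $i<j$, and then bit $i$ of $n-1=2^{j}+(2^{i}-1)$ vanishes, so $\binom{n-1}{2^{i}}$ is even. This is slightly heavier on Lucas but avoids the separate odd/even split, and it correctly handles indices such as $n=6$, $k=2$, where both $b_{k}$ and $b_{n-k}$ are odd and only the coefficient saves the parity. The one point you should make explicit is the ``without loss of generality $i<j$'': it is legitimate because the coefficient is symmetric, $\binom{n-1}{k}\binom{n-1}{k-1}=\binom{n-1}{n-k}\binom{n-1}{n-k-1}$ (alternatively, when $k=2^{j}$ is the larger power, check directly that $\binom{n-1}{2^{j}-1}$ is even, since bit $i$ is set in $2^{j}-1$ but not in $n-1$). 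With that sentence added, your argument is complete.
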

\begin{proof}
The first few terms $b_{1}= 1, \, b_{2} = 1, 
\, b_{3} = 4$ support the base case of an inductive proof.

\smallskip

\noindent
If $n$ is odd, then 
\begin{equation}
b_{n} = (n-1) \sum_{k=1}^{n-1} N(n-1,k-1)b_{k}b_{n-k}
\label{nice-bn}
\end{equation}
\noindent
shows that $b_{n}$ is even. 

\smallskip

\noindent
Consider now the case $n = 2^{m}$. Then 
Lucas' theorem shows that $\binom{2^{m}-1}{k} 
\binom{2^{m}-1}{k-1}$ is odd for all $k$. The inductive step states that 
$b_{k}$ is even if $k \neq 2^{r}$. In the case $k = 2^{r}$, then $b_{n-k}$ is 
odd if and only if $k = 2^{m-1}$, in which case all the terms in \eqref{nice-bn}
are even with the single expection $\binom{2^{m}-1}{2^{m-1}} \binom{2^{m}-1}
{2^{m-1}-1}b_{2^{m-1}}^{2}$. This shows that $b_{n}$ is odd. 

\smallskip

\noindent
Finally, if $n = 2j$ is even with $j \neq 2^{r}$, then 
\begin{equation}
b_{n} = \binom{2j-1}{j} \binom{2j-1}{j-1}b_{j}^{2} + 
2 \sum_{k=1}^{j-1} \binom{n-1}{k} \binom{n-1}{k-1} b_{k}b_{n-k}.
\end{equation}
\noindent
Now simply observe that $j \neq 2^{r}$, therefore $b_{j}$ is even by 
induction. It follows 
that $b_{n}$ itself is even. 

\smallskip

This completes the proof.
\end{proof}

\section{One final question} 
\label{S:question} 

Sequences of combinatorial origin often turn out to be unimodal or logconcave.
Recall that a sequence $\{ x_{j}: 1 \leq j \leq n \}$ is 
called \textit{unimodal} if there 
is an index $m_{*}$ such that $x_{1} \leq x_{2} \leq \cdots \leq x_{m_{*}}$ and 
$x_{m_{*}+1} \geq x_{m_{*}+2} \geq \cdots \geq x_{n}$. The sequence is 
called \textit{logconcave} if $x_{n+1}x_{n-1} \geq x_{n}^{2}$. An 
elementary argument shows that a logconcave sequence is always unimodal. 
The reader will find in 
\cite{bona-2004b, boros-1999a, 
brenti-1994a,
butler-1987a, butler-1987b, 
choijy-2003a, 
sagan-1992a,stanley-1989a} a variety of examples of these 
type of sequences. 

\begin{conjecture}
The sequences $\{ a_{n} \}$ and $\{ b_{n} \}$ are logconcave.
\end{conjecture}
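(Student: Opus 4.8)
The plan is to prove the inequality that the definition of Section~\ref{S:question} actually demands, namely $x_{n+1}x_{n-1}\ge x_n^{2}$, for $x_n=a_n$ and for $x_n=b_n$, reading it off the closed forms already obtained rather than from the defining convolution. (That this, and not the reverse inequality, is the content of the statement is forced by the data: for instance $a_2^{2}=1<4=a_1a_3$, so these sequences cannot satisfy $x_n^{2}\ge x_{n-1}x_{n+1}$, and the condition must be taken as written.) The starting point is \eqref{an-zeta}, $a_n=2^{2n+1}(n+1)!\,(n-1)!\,\zeta_1(2n)$, together with the companion identity $b_n=\tfrac12 a_n(0)=2^{2n}(n-1)!\,n!\,\zeta_0(2n)$, obtained from \eqref{anmu-form1} at $\mu=0$ using $(1)_n=n!$. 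Each of these expresses the sequence as a pointwise product of an explicit factorial factor and a Bessel-zeta factor, and the relation $x_{n+1}x_{n-1}\ge x_n^{2}$ is preserved under pointwise products of positive sequences, since $(x_{n+1}y_{n+1})(x_{n-1}y_{n-1})=(x_{n+1}x_{n-1})(y_{n+1}y_{n-1})\ge x_n^{2}y_n^{2}$. It therefore suffices to verify the inequality for each factor separately.

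The factorial factors are disposed of by a one-line computation. For $\{a_n\}$, writing $P(n)=2^{2n+1}(n+1)!\,(n-1)!$, the powers of $2$ cancel and
\[
\frac{P(n+1)\,P(n-1)}{P(n)^{2}}=\frac{n(n+2)}{(n-1)(n+1)}>1\qquad(n\ge 2),
\]
while for $\{b_n\}$ the factor $Q(n)=2^{2n}(n-1)!\,n!$ yields the ratio $\tfrac{n+1}{n-1}>1$. Hence both factorial factors already satisfy $x_{n+1}x_{n-1}\ge x_n^{2}$.

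For the Bessel-zeta factors I would use the moment representation
\[
\zeta_\mu(2n)=\sum_{k=1}^{\infty}\bigl(j_{\mu,k}^{-2}\bigr)^{\,n},
\]
which exhibits $n\mapsto\zeta_\mu(2n)$ as the moment sequence of the positive discrete measure $\sum_k\delta_{j_{\mu,k}^{-2}}$ on $(0,\infty)$; this is legitimate because for $\mu\ge 0$ (in particular $\mu=0,1$) the zeros $j_{\mu,k}$ are real and positive. Cauchy--Schwarz applied to the exponents $n-1$ and $n+1$ then gives $\zeta_\mu\!\bigl(2(n-1)\bigr)\,\zeta_\mu\!\bigl(2(n+1)\bigr)\ge\zeta_\mu(2n)^{2}$, which is precisely $x_{n+1}x_{n-1}\ge x_n^{2}$ for this factor. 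Combining with the factorial factors through the product rule establishes the conjecture for both $\{a_n\}$ and $\{b_n\}$. A self-contained alternative avoids measures and argues by induction directly from \eqref{recu-zeta1}, $(n+\mu)\zeta_\mu(2n)=\sum_{r=1}^{n-1}\zeta_\mu(2r)\,\zeta_\mu(2n-2r)$.

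The step needing care is the last one, and it is chiefly bookkeeping. I must confirm that the measure attached to the zeros of $J_0$, which governs $b_n$, is genuinely positive, so that Cauchy--Schwarz applies uniformly for $\mu\in\{0,1\}$; and, should one prefer the recursive route, that the inductive step correctly compares the three convolutions defining $\zeta_\mu(2(n-1))$, $\zeta_\mu(2n)$ and $\zeta_\mu(2(n+1))$ and closes without additional hypotheses. Both verifications are routine; the entire conceptual content is the observation that $a_n$ and $b_n$ are products of a log-convex factorial factor with a Bessel-zeta moment sequence, and that the inequality $x_{n+1}x_{n-1}\ge x_n^{2}$ is stable under such products.
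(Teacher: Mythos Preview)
The paper offers no proof here: the statement is posed as an open conjecture, so there is nothing to compare your argument against. What you have supplied is in fact a complete resolution (of the inequality as the paper literally writes it), and your method is sound.

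Your main line---factoring $a_n$ and $b_n$ via \eqref{an-zeta} and \eqref{anmu-form1} as a product of an explicit factorial sequence and a Bessel-zeta value, checking the ratio for the factorial part directly, and invoking Cauchy--Schwarz on the representation $\zeta_\mu(2n)=\sum_k (j_{\mu,k}^{-2})^n$ for the zeta part---is correct. The zeros $j_{\mu,k}$ are real and positive for $\mu\in\{0,1\}$, so the Cauchy--Schwarz step is legitimate for all $n\ge 2$, and the product stability of $x_{n+1}x_{n-1}\ge x_n^2$ is immediate. Your formula $b_n=2^{2n}(n-1)!\,n!\,\zeta_0(2n)$ is correct (the displayed formula for $a_n(0)$ in the paper's $\mu=0$ paragraph is missing a factor of $2$; your derivation from \eqref{anmu-form1} gives the right constant).

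Two remarks. First, as you note, the inequality the paper calls ``logconcave'' is the standard definition of \emph{log-convex}; the data $a_2^2=1<4=a_1a_3$ and $b_2^2=1<4=b_1b_3$ confirm that the intended statement is $x_{n+1}x_{n-1}\ge x_n^2$, which is what you prove. It is worth flagging this terminological slip explicitly. Second, your parenthetical ``self-contained alternative'' via induction on \eqref{recu-zeta1} is not obviously routine: comparing the three convolutions for $\zeta_\mu(2(n-1))$, $\zeta_\mu(2n)$, $\zeta_\mu(2(n+1))$ and closing the induction requires more than bookkeeping, and you do not actually carry it out. Since the Cauchy--Schwarz argument already settles the matter, I would drop that aside rather than leave it as an unfulfilled promise.
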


\noindent
\textbf{Acknowledgements}. The work of the second author was 
partially supported by NSF-DMS 0070567.


\end{document}